\def\f{\footnotesize}
\def\IE{{\mathbb E}}
\def\IZ{{\mathbb Z}}
\def\IR{{\mathbb R}}
\def\IP{{\mathbb P}}
\def\IL{{\mathbb L}}
\def\IZ{{\mathbb Z}}
\def\IL{{\mathbb L}}
\def\IQ{{\mathbb Q}}
\def\cA{{\cal A}}
\def\cB{{\cal B}}
\def\cC{{\cal C}}
\def\cE{{\cal E}}
\def\cF{{\cal F}}
\def\cI{{\cal I}}
\def\cS{{\cal S}}
\def\cU{{\cal U}}
\def\cV{{\cal V}}
\def\cG{{\cal G}}
\def\n{\noindent}
\def\dis{\displaystyle}
\def\ov{\overline}
\def\wt{\widetilde}
\def\wh{\widehat}
\def\dsl{\textstyle\sum\limits}
\def\r{\rightarrow}
\def\ve{\varepsilon}
\def\point{{\mbox{\large $.$}}}
\def\va{\stackrel{\cV^a}{\longleftrightarrow} \hspace{-2.7ex} \mbox{\f $/$}}
\def\vv{\stackrel{v}{\longleftrightarrow} \hspace{-2.5ex} \mbox{\f $/$}}
\def\ww{\stackrel{w}{\longleftrightarrow} \hspace{-2.5ex} \mbox{\f $/$}}
\def\vvv{\stackrel{v'}{\longleftrightarrow} \hspace{-2.5ex} \mbox{\f $/$}}
\def\uu{\stackrel{u}{\longleftrightarrow} \hspace{-2.5ex} \mbox{\f $/$}}
\def\lr{\longleftrightarrow \hspace{-2.8ex} \mbox{\f $/$}}
\numberwithin{equation}{section}
\newtheorem{theorem}{Theorem}[section]
\newtheorem{lemma}[theorem]{Lemma}
\newtheorem{corollary}[theorem]{Corollary}
\newtheorem{proposition}[theorem]{Proposition}
\newtheorem{remark}[theorem]{Remark}
\theoremstyle{remark}
\title{\large{\textbf{EXCESS DEVIATIONS FOR  POINTS DISCONNECTED \\
BY RANDOM INTERLACEMENTS}}}
\date{}
\begin{document}
\maketitle \thispagestyle{empty}
\begin{center} \vspace{-0.7cm}  Alain-Sol Sznitman
\end{center}  

\medskip
\begin{abstract}
We consider random interlacements on $\IZ^d$, $d \ge 3$, when their vacant set is in a strongly percolative regime. Given a large box centered at the origin, we establish an asymptotic upper bound on the exponential rate of decay of the probability that the box contains an excessive fraction $\nu$ of points that are disconnected by random interlacements from the boundary of a concentric box of double size. As an application, we show that when $\nu$ is not too large this asymptotic upper bound matches the asymptotic lower bound derived in \cite{Szni19d}, and the exponential rate of decay is governed by the variational problem in the continuum involving the percolation function of the vacant set of random interlacements that was studied in \cite{Szni19c}. This is a further confirmation of the pertinence of this variational problem.
\end{abstract}

\vspace{8cm}

\noindent
Departement Mathematik\\   
ETH Z\"urich\\
CH-8092 Z\"urich\\ 
Switzerland \\

\newpage
\thispagestyle{empty} \mbox{}
\newpage  \pagestyle {plain}

\section{Introduction}
In this article we consider random interlacements on $\IZ^d$, $d \ge 3$, when their vacant set is in a strongly percolative regime. In this regime, several kinds of disconnection events of a large deviation nature and their resulting effect on the random interlacements have recently been investigated in \cite{ChiaNitz20a}, \cite{NitzSzni}, \cite{Szni19b}, \cite{Szni17}. Random interlacements are also closely connected to the Gaussian free field, see for instance \cite{DrewPrevRodr}, and similar largely deviant disconnection events have likewise been investigated in the context of the level-set percolation of the Gaussian free field, see \cite{ChiaNitz20b}, \cite{Nitz18}, \cite{Szni19b}, \cite{Szni15}.  In the present work, given a large box centered at the origin of side-length of order $N$, we study the asymptotic exponential rate of decay for the probability that the box contains an excessive fraction $\nu$ of points that are disconnected by random interlacements from the boundary of a concentric box of double size. We establish a general asymptotic upper bound. In particular, we show that when $\nu$ is not too large, this asymptotic upper bound matches in principal order the asymptotic lower bound of \cite{Szni19d}, and confirms the pertinence of the variational problem studied in \cite{Szni19c}. Importantly, in contrast to \cite{Szni19b}, no thickening is involved in the definition of the excess event that we consider, and the resulting effect is markedly different. It remains open whether the assumption on the size of $\nu$ can be removed, and whether in the case of a large enough $\nu$ macroscopic secluded droplets are present and contribute to the excess volume of disconnected points. Such a behaviour would share some flavor with the phase separation and the emergence of a macroscopic Wulff shape for the Bernoulli percolation or for the Ising model, see \cite{Cerf00}, \cite{Bodi99}.
However, it should be pointed out that in the present context, and in the case of the Gaussian free field as well, Dirichlet energy and capacity replaces total variation and perimeter, and the rough order of the exponential decay of the probability of the large deviations is $N^{d-2}$ and not $N^{d-1}$.  \linebreak

We will now describe the results of this article in more details. We denote by $\cI^u$ the random interlacements at level $u \ge 0$ in $\IZ^d$ and by $\cV^u = \IZ^d \backslash \cI^u$ the corresponding vacant set. We are interested in the strongly percolative regime for the vacant set, that is, we assume that
\begin{equation}\label{0.1}
0 < u < \ov{u} ( \le u_*),
\end{equation}
where the precise definition of $\ov{u}$ from (2.3) of \cite{Szni17} is recalled in (\ref{1.26}) below, and $u_*$ denotes the critical level for the percolation of the vacant set of random interlacements. Thanks to the results in  \cite{DrewRathSapo14a}, it is known that $\ov{u}$ is positive, and it is plausible, and presently the object of active research, that $\ov{u} = u_*$. In the context of the closely related model of the level-set percolation of the Gaussian free field, the corresponding equality has been established in the recent work \cite{DumiGoswRodrSeve19}. 

\medskip
We denote by $\theta_0$ the percolation function:
\begin{equation}\label{0.2}
\theta_0(a) = \IP[0  \va \;\;\, \infty], a \ge 0,
\end{equation}
where $\{0 \va \;\; \infty\}$ stands for the event that $0$ does not belong to an infinite component of $\cV^a$, see Figure 1. The function $\theta_0$ is non-decreasing, left-continuous, identically equal to $1$ on $(u_*, \infty)$, with a possible (but not expected) jump at $u_*$, see \cite{Teix09a}. One also knows from \cite{Szni19c} that $\theta_0$ is $C^1$ and has positive derivative on $[0,\wh{u})$, where the definition of $\wh{u}$ (as the supremum of values $v$ in $[0,u_*)$ such that the {\it no large finite cluster property} holds on $[0,v]$) is recalled in (\ref{1.31}) below. One knows from \cite{DrewRathSapo14a} that $\wh{u} > 0$, and the equality $\wh{u} = u_*$ is also plausible but presently open.

\pagebreak
\begin{center}
\psfrag{1}{$1$}
\psfrag{0}{$0$}
\psfrag{a}{$a$}
\psfrag{wt}{$\theta^*$}
\psfrag{t0}{$\theta_0$}
\psfrag{u*}{$u_*$}
\psfrag{uu}{\f $(\sqrt{u} + c_0 (\sqrt{\ov{u}} - \sqrt{u}))^2$}
\includegraphics[width=8cm]{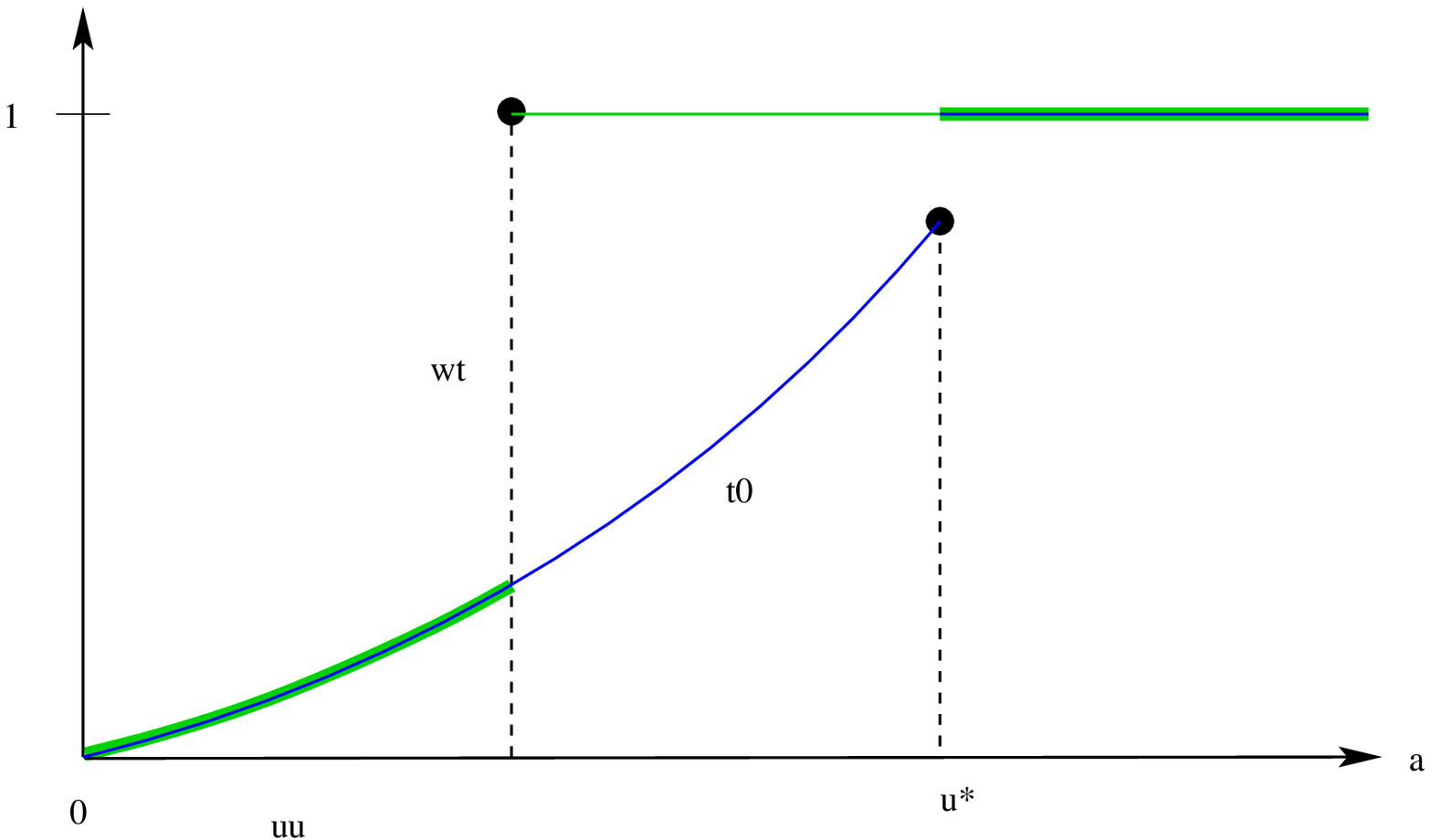}
\end{center}

\bigskip
\begin{tabular}{ll}
Fig.~1: & A heuristic sketch of the functions $\theta_0$ (with a possible but not expected jump\\
&at $u_*$) and $\theta^*$ in (\ref{0.11}). The constant $c_0$ stems from Theorem \ref{theo3.1}.
\end{tabular}

\bigskip
We consider $N \ge 1$ and the discrete box centered at the origin 
\begin{equation}\label{0.3}
D_N = [-N,N]^d \cap \IZ^d.
\end{equation}
We view $D_N$ as the discrete blow-up $(ND) \cap \IZ^d$ of the continuous model shape
\begin{equation}\label{0.4}
D = [-1,1]^d.
\end{equation}
Further, for integer $r \ge 0$, we write $S_r = \{x \in \IZ^d$; $|x|_\infty = r\}$ for the set of points in $\IZ^d$ with sup-norm equal to $r$ and define
\begin{equation}\label{0.5}
\mbox{$\cC^u_r =$ the connected component of $S_r$ in $\cV^u \cup S_r$ (so $S_r \subseteq \cC^u_r$ by convention)}.
\end{equation}
We single out the set of points in $D_N$ that get disconnected by $\cI^u$ from $S_{2N}$, that is $D_N \backslash \cC^u_{2N}$, and its subset $D_N \backslash \cC^u_N$ of points in the interior of $D_N$ that get disconnected by $\cI^u$ from $S_N$. We are interested in their ``excessive presence'' in $D_N$. More precisely, we consider
\begin{equation}\label{0.6}
\nu \in [\theta_0(u), 1),
\end{equation}
and the {\it excess events} (where for $U$ finite subset of $\IZ^d$, $|U|$ denotes the number of points in $U$)
\begin{equation}\label{0.7}
\cA_N = \{|D_N \backslash \cC^u_{2N} | \ge \nu\,|D_N|\} \supseteq \cA^0_N = \{|D_N \backslash \cC^u_N| \ge \nu \,|D_N|\}.
\end{equation}
An asymptotic lower bound on $\IP[\cA^0_N]$ was derived in (6.32) of \cite{Szni19d}. Combined with Theorem 2 of \cite{Szni19c} it shows that
\begin{align}
&\liminf\limits_N \; \dis\frac{1}{N^{d-2}} \;\log \IP[\cA^0_N] \ge - \ov{J}_{u,\nu}, \;\mbox{where} \label{0.8} 
\\[1ex]
& \ov{J}_{u,\nu} = \min \Big\{\mbox{\f $\dis\frac{1}{2d}$} \;\dis\int_{\IR^d} |\nabla \varphi|^2 dz; \varphi \ge 0, \varphi \in D^1(\IR^d), \dis\strokedint_D \;\ov{\theta}_0 \big((\sqrt{u} + \varphi)^2\big) \, dz \ge \nu\Big\} \label{0.9}
\end{align}

\n
and $\ov{\theta}_0$ stands for the right-continuous modification of $\theta_0$, $\strokedint_D ...$ for the normalized integral $\frac{1}{|D|} \,\int_D ...$, with $|D| = 2^d$ the Lebesgue measure of $D$, and $D^1(\IR^d)$ for the space of locally integrable functions $f$ on $\IR^d$ with finite Dirichlet energy that decay at infinity, i.e.~such that $\{|f| > a\}$ has finite Lebesgue measure for all $a > 0$, see Chapter 8 of \cite{LiebLoss01}.

\medskip
The lower bound (\ref{0.8}) is derived via the change of probability method and for $\varphi$ in (\ref{0.9}), $(\sqrt{u} + \varphi)^2 (\frac{\point}{N})$ can heuristically be interpreted as the slowly varying local levels of the {\it tilted interlacements} that enter the derivation of the lower bound (see Section 4 and Remark 6.6 2) of \cite{Szni19d}). It is an open question whether for large enough $\nu$ the minimizers $\varphi$ in (\ref{0.9}) reach the value $\sqrt{u}_* - \sqrt{u}$. The region where they reach the value  $\sqrt{u}_* - \sqrt{u}$ could reflect the occurrence of droplets secluded by the interlacements that might share the burden of producing an excess volume of disconnected points, somewhat in the spirit of the Wulff droplet in the case of the Bernoulli percolation or for the Ising model, see Theorem 2.12 of \cite{Cerf00}, and \cite{Bodi99}.

\medskip
Our main interest here lies with the derivation of an asymptotic upper bound on $\IP[\cA_N]$ $( \ge \IP[\cA^0_N])$ that possibly matches (\ref{0.8}). In the main Theorem \ref{theo4.3} of this article we show that there is a dimension dependent constant $c_0 \in (0,1)$, constructed in Theorem \ref{theo3.1}, so that when $0 < u <\ov{u}$, setting $\theta^*$ to be the function on $\IR_+$ such that $\theta^*(v) = \theta_0(v)$ for $v < (\sqrt{u} + c_0 (\sqrt{\ov{u}} - \sqrt{u}))^2$, and $\theta^*(v) = 1$ otherwise, see Figure 1, one has for all $\nu \in [\theta_0(u), 1)$
\begin{align}
&\limsup\limits_N \; \dis\frac{1}{N^{d-2}} \; \log \IP [\cA_N] \le - J^*_{u,\nu}, \; \mbox{where}\label{0.10}
\\[2ex]
&J^*_{u,\nu} =   \min \Big\{\mbox{\f $\dis\frac{1}{2d}$} \;\dis\int_{\IR^d} |\nabla \varphi|^2 dz; \varphi \ge 0, \varphi \in D^1(\IR^d),  \dis\strokedint_D \; \theta^* \big((\sqrt{u} + \varphi)^2\big) \,dz \ge \nu\Big\}. \label{0.11}
\end{align}
As an application of Theorem \ref{theo4.3}, i.e.~(\ref{0.10}) and (\ref{0.11}), we are able to show that in the ``small excess'' regime the asymptotic upper bound (\ref{0.10}) matches the asymptotic lower bound (\ref{0.8}). More precisely, we show in Corollary \ref{cor4.3} that 
\begin{equation}\label{0.12}
\begin{array}{l}
\mbox{when $0 < u < \ov{u} \wedge \wh{u}$, there exists $\nu_0 > \theta_0 (u)$ such that for all $\nu \in [\theta_0(u), \nu_0)$}
\\
\lim\limits_N \; \dis\frac{1}{N^{d-2}} \;\log \IP[\cA_N] =\lim\limits_N \; \dis\frac{1}{N^{d-2}} \;\log \IP[\cA^0_N] = - \ov{J}_{u,\nu}.
\end{array}
\end{equation}
It is a natural question whether the asymptotics in (\ref{0.12}) actually holds for all $\nu$ in $[\theta_0(u), 1)$. Incidentally, this issue is also related to the question whether $c_0$ mentioned above (\ref{0.10}) and that appears in Theorem \ref{theo3.1} can be chosen arbitrarily close to $1$, see Remarks \ref{rem4.4a} and \ref{rem4.4} 2).  As an aside, if $\ov{u} =u_* $ holds, formally setting $c_0 = 1$ one finds that $\theta^*$ coincides with $\ov{\theta}_0$ and $J^*_{u,\nu}$ with $\ov{J}_{u,\nu}$. Another natural problem is whether the set of disconnected points can be replaced by the set of points outside the infinite cluster $\cC^u_\infty$ of $\cV^u$, and (\ref{0.12}) actually holds with $\cA_N$ replaced by the bigger event $\{|D_N \backslash \cC^u_\infty| \ge \nu |D_N|\}$, when $0 < u < u_*$ and $\theta_0(u) \le \nu < 1$, see Remark \ref{rem4.4} 3).

\medskip
A question of a similar nature to that of the asymptotic behavior of $\IP[\cA^0_N]$ was investigated in \cite{Szni19b}. There, $\cC^u_N$ was replaced by $\wt{\cC}^u_N$, a certain {\it thickening} of $\cC^u_N$ (obtained by adding to $\cC^u_N$ points at a suitable sub-macroscopic distance $\wt{L}_0(N) = o(N)$ of $\cC^u_N$). It was in particular shown in \cite{Szni19b} that for $\nu$ small enough so that the closed Euclidean ball $B_\nu$ with center $0$ and volume $\nu |D| ( = 2^d \nu)$ is contained in the interior of $D$, one has for $0 < u < \ov{u}$,
\begin{equation}\label{0.13}
\left\{
\begin{array}{l}
\limsup\limits_N \; \dis\frac{1}{N^{d-2}} \;\log \IP[|D_N \backslash \wt{\cC}^u_N| \ge \nu |D_N|] \le -\frac{1}{d} ( \sqrt{\ov{u}} - \sqrt{u})^2 {\rm cap}_{\IR^d}(B_\nu),
\\[3ex]
\liminf\limits_N \; \dis\frac{1}{N^{d-2}} \;\log \IP[|D_N \backslash \wt{\cC}^u_N| \ge \nu |D_N|] \ge -\frac{1}{d} ( \sqrt{u_{**}} - \sqrt{u})^2 {\rm cap}_{\IR^d}(B_\nu),
\end{array}\right.
\end{equation}
with ${\rm cap}_{\IR^d}(B_\nu)$ the Brownian capacity of $B_\nu$ (see for instance \cite{PortSton78}, p.~57, 58), and $u_{**}$ ($\ge u_*)$ the critical level for the strongly non-percolative regime of the vacant set (here again $u_{**} = u_*$ is expected, but currently open, so plausibly the right members in (\ref{0.13}) are equal).

\medskip
In the present work (unlike in \cite{Szni19b}) there is no thickening of $\cC^u_N$ or $\cC^u_{2N}$ entering the definitions of $\cA^0_N$ and $\cA_N$ (and both contain the event under the probability in (\ref{0.13})). The variational quantity $\ov{J}_{u,\nu}$ plays the role of $\frac{1}{d} (\sqrt{u}_* - \sqrt{u})^2 \,{\rm cap}_{\IR^d}(B_\nu)$ in (\ref{0.13}) (assuming the equalities $\ov{u} = u_* = u_{**}$). Informally, this last quantity corresponds to a choice of a test function $\varphi = (\sqrt{u}_* - \sqrt{u}) \,h_{B_\nu}$ in (\ref{0.9}), with $h_{B_\nu}$ the equilibrium potential in $\IR^d$ of the ball $B_\nu$, and the replacement of $\ov{\theta}_0(v)$ by the smaller function $1 \{ v \ge u_*\}$. We refer to Proposition 6.5 of \cite{Szni19d} for further links between these variational quantities.

\medskip
Let us say a few words about the proof of the main asymptotic upper bound (\ref{0.10}). The main step is carried out in Proposition \ref{prop4.1}. A substantial challenge stems from the possible presence of ``bubbles'' intersecting $D_N$ that can occupy a macroscopic share of volume, on the surface of which random interlacements have a local level above $u_*$, thus creating ``insulating fences'' that may block connections in $\cV^u$ between the interior of such bubbles and $S_{2N}$. Such ``bubbles'' are non-local objects and accounting for the cost of their presence is a delicate matter. Importantly, in the absence of a thickening of $\cC^u_{2N}$, the ``bubbles'' that we are faced with are irregular and lack inner depth. This specific feature precludes the use of the coarse graining procedure developed in Section 4 of \cite{NitzSzni} that played a crucial role in \cite{ChiaNitz20a} and \cite{Szni19b}, as well as in \cite{Nitz18}, \cite{ChiaNitz20a}.

\begin{center}
\psfrag{DN}{$D_N$}
\includegraphics[width=6cm]{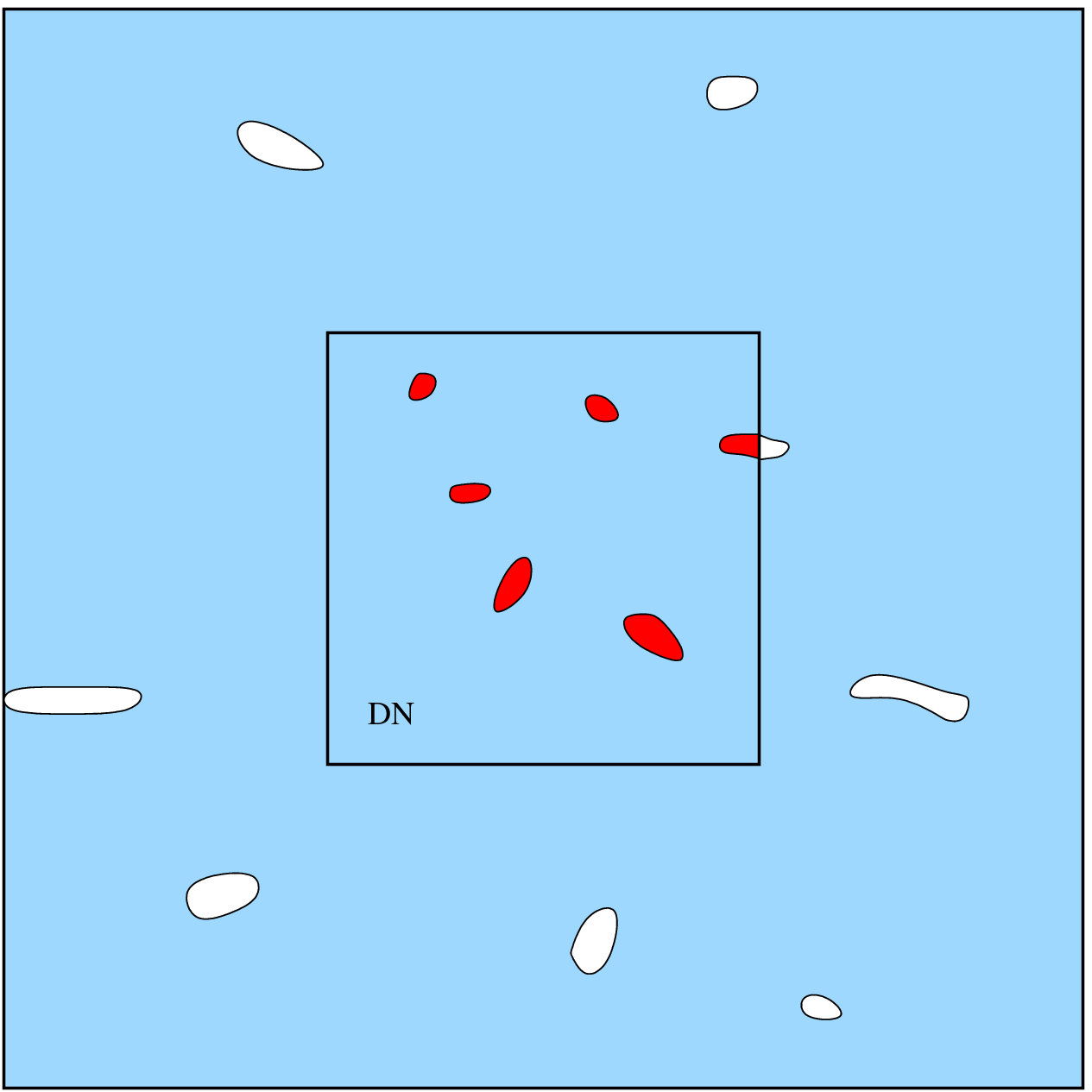}
\end{center}

\medskip
\begin{center}
\begin{tabular}{ll}
Fig.~2: & An informal illustration of the bubble set $B u b$ from (\ref{1.47}) in red. \\
& The light blue region consists of $B_1$-boxes where the random \\
& set $\cU_1$ enters deeply enough in $B_1$ (see (\ref{1.46})).
\end{tabular}
\end{center}

\medskip\n
In the first main step corresponding to Theorem \ref{theo2.1} we perform local averaging in the $B_1$-boxes (of scale $L_1$) contained in $D_N$ that lie outside the {\it bubble set} $B u b$. After this step the task of bounding the probability of $\cA_N$ is replaced by that of bounding the probability of $\cA'_N$, see (\ref{2.11}), that roughly corresponds to the event 
\begin{equation}\label{0.14}
| B u b| + \dis\sum\limits_{B_1 \subseteq D_N \backslash B u b} \wt{\theta} (u_{B_1}) \,|B_1| \ge \nu' \,|D_N|,
\end{equation}
where $\nu'$ is slightly smaller than $\nu$, $u_{B_1}$ denotes the local level of the interlacements in the box $B_1$ (see (\ref{1.45})), and the function $\wt{\theta}$ equals $\theta_0$ up to a level close to $\ov{u}$ and then equals $1$. As in \cite{Szni19d} the scale $L_1$ of the $B_1$-boxes is roughly $N^{\frac{2}{d}}$, see (\ref{1.8}), so that $(N/L_1)^d$ roughly equals $N^{d-2}$. This choice corresponds to the following constraints. The scale $L_1$ should not be too large, so that with overwhelming probability most $B_1$ boxes behave well with respect to local averaging, and the scale $L_1$ should not be too small, so that coarse graining for the local levels $u_{B_1}$ of all $B_1$-boxes in $D_N$ can be performed with $\exp(o(N^{d-2}))$ complexity. The bubble set $B u b$ is defined in (\ref{1.47}). It consists of the $B_1$-boxes in $D_N$ where a certain random set $\cU_1$ does not get ``deep'' inside $B_1$. The random set $\cU_1$, see (\ref{1.40}), as in \cite{NitzSzni}, is defined via exploration starting outside a larger box concentric to $D_N$ (namely $[-3N,3N]^d$) with $B_0$-boxes of size $L_0$ comparable to $N^{\frac{1}{d-1}}$ (much smaller than $L_1$, see (\ref{1.7}), (\ref{1.8})) that have a good local behavior (so-called $(\alpha, \beta,\gamma)${\it -good boxes}, see (\ref{1.38})), and such that the local level of random interlacements in these boxes remains strictly below $\ov{u}$ (namely at most $\beta$, see (\ref{1.40})). The random set $\cU_1$ brings along a {\it profusion of highways} in the vacant set $\cV^u$ that permit to exit $[-2N,2N]^d$, and thus reach $S_{2N}$ when starting in $D_N$. The choice of the scale $L_0$ corresponds to the following constraints. The scale $L_0$ (as reflected in the choice $N^{\frac{1}{d-1}}$) should not be too large so that $D_N$ contain at least $N^{d-2}$ columns of $L_0$-boxes and the $(\alpha,\beta,\gamma)$-bad boxes do not spoil projection arguments. It should also be not too small, see (\ref{2.73}), (\ref{2.81}), so that the communication between the local levels in $B_1$-boxes and the local levels in the $B_0$-boxes, which they contain, functions harmoniously. The choice $L_0 = [N^{\frac{1}{d-1}}]$ in (\ref{1.7}) fits these requirements.

\medskip
The bubble set $B u b$ that appears in (\ref{0.14}) thus consists of $B_1$-boxes in $D_N$ that are not met ``deep inside'' by $\cU_1$. No thickening is performed in the original question we handle and $B u b$ is quite irregular. In particular, it lacks sufficient inner depth (roughly corresponding to the ``nearly macroscopic'' scale $\wh{L}_0$ in (4.19) of \cite{NitzSzni}) and the coarse graining procedure of Section 4 of \cite{NitzSzni} does not apply. In Section 3 we devise a new coarse graining approach, using a ``bird's-eye view'' to address this central issue. In the crucial Theorem \ref{theo3.1} we construct a random set $C_\omega$ that can take at most $\exp(o(N^{d-2}))$ shapes, which has small volume, which is made of well-spaced $B_0$-boxes that are $(\alpha, \beta,\gamma)$-good with local level above $\beta$, and which is such that the (discrete) equilibrium potential of $C_\omega$ is at least $c_0$ on the bubble set $B u b$ apart from a set of small volume. This is where the important constant $c_0$ entering the definition of the function $\theta^*$ above (\ref{0.10}) appears. The random set $C_\omega$ is extracted from the $B_0$-boundary of $\cU_1$ (see below (\ref{1.41})), and the coarse graining procedure that we employ uses some ideas from the {\it method of enlargement of obstacles} (see for instance Chapter 4 in \cite{Szni98a}). In Section 4 we complete the proof of Proposition \ref{prop4.1} (which is the main step towards (0.10), (0.11)). An important aspect is to find an adequate formulation implementing the constraint (\ref{0.14}) that behaves well under scaling limit. This corresponds to (\ref{4.26}) - (\ref{4.29}), where the event $\cA'_N$ gets coarse grained and certain non-negative super-harmonic functions solving an obstacle problem accounting for the random set $C_\omega$ from Theorem \ref{theo3.1} and the local levels $u_{B_1}$ away from $C_\omega$ enter the new formulation. After that the proof proceeds along the same lines as in Section 5 of \cite{Szni19d}.

\medskip
We will now describe the organization of this article. Section 1 collects some notation and recalls various facts about simple random walk, potential theory, and random interlacements. Lemma \ref{lem1.2} due to \cite{AsseScha20} and Lemma \ref{lem1.1} are related to capacity and their application enters the coarse graining procedure for the construction of the random set $C_\omega$ in Theorem \ref{theo3.1}. The important random set $\cU_1$ and the bubble set $B u b$ are respectively defined in (\ref{1.40}) and (\ref{1.47}). Section 2 is devoted to the proof of Theorem \ref{theo2.1} where local averaging is performed, and the event $\cA'_N$ (in essence corresponding to (\ref{0.14})) is introduced. An extensive use is made of the important soft local time technique of \cite{PopoTeix15} in the version developed by \cite{ComeGallPopoVach13}. When looking at well-separated boxes of a given size it provides access in each box to an ``undertow'' (corresponding to the local level of random interlacements in the box) and a ``wavelet part'' (corresponding to a collection of excursions) with good independence properties for the wavelet parts. Section 3 is devoted to the construction of the random set $C_\omega$ in Theorem \ref{theo3.1}. This is where the pivotal constant $c_0$ appears. Finally, Section 4 contains Theorem \ref{theo4.3} that proves the crucial upper bound (\ref{0.10}), (\ref{0.11}). However, the main work is carried out in Proposition \ref{prop4.1}. The application to the small excess regime is presented in Corollary \ref{cor4.3} where (\ref{0.12}) is proved. Remark \ref{rem4.4} lists several open questions.

\medskip
To conclude, let us state our convention about constants. Throughout the article we denote by $c, \wt{c}, c'$ positive constants changing from place to place that simply depend on the dimension $d$. Numbered constants $c_0,c_1,c_2, \dots$ refer to the value corresponding to their first appearance in the text. Dependence on additional parameters appears in the notation.

\section{Notation, some useful results and random sets}
\setcounter{equation}{0}

In this section we introduce some further notation. We recall and collect some facts concerning random walks, potential theory, and random interlacements. We also introduce important random sets such as $\cU_1$, see (\ref{1.40}), and the ``bubble set''  $B u b$, see (\ref{1.47}).

\medskip
First some notation:  for $(a_n)_{n \ge 1}$ and $(b_n)_{n \ge 1}$ positive sequences, $a_n \gg b_n$ or $b_n = o(a_n)$ means that $b_n/a_n \underset{n}{\longrightarrow} 0$. We write $| \cdot |$ and $| \cdot |_\infty$ for the Euclidean and the supremum norms on $\IR^d$. Throughout we assume that $d \ge 3$. Given $x \in \IZ^d$ and $r \ge 0$, we let $B(x,r) = \{y \in \IZ^d; |y - x|_\infty \le r\}$ stand for the closed ball of radius $r$ around $x$ for the supremum distance (note that $D_N$ in (\ref{0.3}) coincides with $B(0,N)$). Given $L \ge 1$ integer, we say that a subset $B$ of $\IZ^d$ is an $L$-box when it is a translate of $\IZ^d \cap [0,L)^d$. We sometimes write $[0,L)^d$ in place of $\IZ^d \cap [0,L)^d$ when no confusion arises. Given $A, A'$ subsets of $\IZ^d$, we denote by $d_\infty (A,A') = \inf\{|x - x'|_\infty$; $x \in A, x' \in A'\}$ the mutual supremum distance between $A$ and $A'$, and write $d_\infty(x,A')$ for simplicity when $A = \{x\}$. We let ${\rm diam}(A) = \sup\{|x-x'|_\infty$; $x$, $x' \in A\}$ stand for the sup-norm diameter of $A$, and $|A|$ for the cardinality of $A$. We write $A \subset \subset \IZ^d$ to state that $A$ is a finite subset of $\IZ^d$. We denote by $\partial A = \{y \in \IZ^d \backslash A$; $\exists x \in A$, $|y - x| = 1\}$ and $\partial_i A = \{x \in A$; $\exists y \in \IZ^d \backslash A$, $|y - x| = 1\}$ the boundary and the internal boundary of $A$. When $f,g$ are functions on $\IZ^d$, we write $\langle f,g \rangle = \sum_{x \in \IZ^d} f(x) \,g(x)$ when the sum is absolutely convergent. We also use the notation $\langle \rho, f \rangle$ for the integral of a function $f$ (on an arbitrary space) with respect to a measure $\rho$, when this quantity is meaningful.

\medskip
Concerning connectivity properties, we say that $x,y$ in  $\IZ^d$ are neighbors when $|y - x| = 1$ and call $\pi$: $\{0,\dots,n\} \rightarrow \IZ^d$ a path, when $\pi(i)$ and $\pi(i-1)$ are neighbors for $1 \le i \le n$. For $A, B, U$ subsets of $\IZ^d$, we say that $A$ and $B$ are connected in $U$ and write $A \stackrel{U}{\longleftrightarrow} B$ when there exists a path with values in $U$ which starts in $A$ and ends in $B$. When no such path exists we say $A$ and $B$ are not connected in $U$ and write $A \stackrel{U}{\longleftrightarrow} \hspace{-2.7ex}\mbox{\scriptsize$/$} \; \;  B$ .

\medskip
We turn to the notation concerning continuous time simple random walk on $\IZ^d$. For $U \subseteq \IZ^d$, we write $\Gamma(U)$ for the set of right-continuous, piecewise constant functions from $[0,\infty)$ to $U \cup \partial U$ with finitely many jumps on any finite interval that remain constant after their first visit to $\partial U$. For $U \subset \subset \IZ^d$ the space $\Gamma (U)$ conveniently carries the law of certain excursions contained in the trajectories of the random interlacements. We also view the law $P_x$ of the continuous time simple random walk on $\IZ^d$ with unit jump rate, starting in $x \in \IZ^d$, as a measure on $\Gamma (\IZ^d)$. We write $E_x$ for the corresponding expectation. Given $U \subseteq \IZ^d$, we denote by $H_U = \inf\{t \ge 0$; $X_t \in U\}$ and $T_U = \inf\{t \ge 0$; $X_t \notin U\}$ the respective entrance time in $U$ and exit time from $U$.

\medskip
We denote by $g(\cdot,\cdot)$ the Green function of the simple random walk:
\begin{equation}\label{1.1}
g(x,y) = E_x \Big[\dis\int^\infty_0 1 \{X_s = y\} \,ds\Big], \;\mbox{for $x,y \in \IZ^d$},
\end{equation}
and when $f$ is a function on $\IZ^d$ such that $\sum_{y \in \IZ^d} g(x,y) \, |f(y)| < \infty$ for all $x$ in $\IZ^d$, we write
\begin{equation}\label{1.2}
G f(x) = \dis\sum\limits_{y \in \IZ^d} g(x,y) \,f(y), \; \mbox{for $x \in \IZ^d$}.
\end{equation}
The Green function is symmetric and translation invariant. Further, one knows that $g(x,y) (= g(x-y,0)) \sim \frac{d}{2} \;\Gamma (\frac{d}{2} - 1) \, \pi^{- \frac{d}{2}} |y - x|^{2 -d }$, as $|y - x| \rightarrow \infty$ (see Theorem 1.5.4, p.~31 of \cite{Lawl91}), and we denote by $c_*$ a positive constant such that
\begin{equation}\label{1.3}
g(x,y) \le c_* \,|y - x|^{2-d}, \;\mbox{for $x,y \in \IZ^d$}.
\end{equation}
Given $A \subset \subset \IZ^d$, we write $e_A$ for the equilibrium measure of $A$ and ${\rm cap}(A)$ for its total mass, the capacity of $A$. The equilibrium measure $e_A$ is supported by the internal boundary of $A$ and one knows that
\begin{equation}\label{1.4}
\mbox{$G e_A = h_A$ where $h_A(x) = P_x [H_A < \infty]$, $x \in \IZ^d$, is the equilibrium potential of $A$}.
\end{equation}
When $A \not= \phi$, we also write $\ov{e}_A = e_A / {\rm cap}(A)$ for the normalized equilibrium measure of $A$. In the special case of boxes, one knows (for instance by \cite{Lawl91}, p.~31) that with $B= [0,L)^d$,
\begin{equation}\label{1.5}
\mbox{$c L^2 \le G 1_B (x) \le c' \,L^2$, for $x \in B$ and $L \ge 1$},
\end{equation}
as well as (see (2.16), p.~53 of \cite{Lawl91})
\begin{equation}\label{1.6}
\mbox{$c L^{d-2} \le {\rm cap}(B) \le c' \,L^{d-2}$, for $L \ge 1$}.
\end{equation}
Apart from the macroscopic scale $N$ (governing the size of the box $D_N$ in (\ref{0.3})) two length scales will play an important role for us:
\begin{align}
L_0 & =[N^{\frac{1}{d-1}}], \;\mbox{and} \label{1.7}
\\[1ex]
L_1 &=\mbox{$k_N \,L_0$, where $k_N$ is the integer such that $k_N \,L_0 \le N^{\frac{2}{d}} (\log N)^{\frac{1}{d}} < (k_N + 1) \,L_0$}.\label{1.8}
\end{align}
Note that $\frac{1}{d-1} < \frac{2}{d}$ so that $k_N \r \infty$ and $L_1/L_0 \r \infty$, with $L_0 \sim N^{\frac{1}{d-1}}$ and $L_1 \sim N^{\frac{2}{d}} (\log N)^{\frac{1}{d}}$ as $N \r \infty$. Also $\frac{1}{d} < \frac{1}{d-1}$, so that $(L_1 / L_0)^2 = o(L_1)$ as $N \r \infty$, and we will use this feature in the next section, see (\ref{2.73}) and (\ref{2.81}).

\medskip
We will call $B_0$-box or $L_0$-box any box of the form
\begin{equation}\label{1.9}
B_{0,z} = z + [0,L_0)^d, \;\mbox{where} \; z \in \IL_0 \stackrel{\rm def}{=} L_0 \,\IZ^d.
\end{equation}
Often we will simply write $B_0$ to refer to a generic box $B_{0,z}$, $z \in \IL_0$ and call $z$ the base point of $B_0$. Likewise, we will call $B_1$-box or $L_1$-box any box of the form
\begin{equation}\label{1.10}
\mbox{$B_{1,z} = z + [0,L_1)^d$, where $z \in \IL_1 \stackrel{\rm def}{=} L_1 \,\IZ^d (\subseteq \IL_0$ by (\ref{1.8}), (\ref{1.7}))},
\end{equation}
and write $B_1$ for a generic box $B_{1,z}, z \in \IL_1$.

\medskip
At this stage it is perhaps helpful to provide some comments on the role of these boxes and their size. In essence, following \cite{Szni19d}, the $B_1$-boxes will be used to perform local averages, see for instance (\ref{2.19}). The fact that $(N/L_1)^d$ is comparable to $N^{d-2}$ (up to a logarithmic factor) ensures on the one hand that with overwhelming probability the local averaging involving the local level of random interlacements in the box is applicable to most of the $B_1$-boxes that we consider, and on the other hand that there are not too many boxes, so that we can perform coarse graining, see below (\ref{4.29}). As for the $B_0$-boxes, following \cite{NitzSzni}, they will be used to construct the random set $\cU_1$ that provides ``highways in $\cV^u$'' to get beyond $[-2N, 2N]^d$, see below (\ref{1.41}). The bubble set (\ref{1.47}) will morally correspond to the $B_1$-boxes contained in $D_N \backslash \cU_1$. Here, having $L_0$ comparable to $N^a$  with $\frac{1}{d} < a \le \frac{1}{d-1}$, ensures that the number of columns of $B_0$-boxes in $D_N$ is at least of order $N^{d-2}$ so that we can use projection arguments and cope with the occurrence of bad $B_0$-boxes, see for instance (\ref{3.24}) - (\ref{3.27}), but also ensures that the $B_0$-boxes sitting in a $B_1$-box are large enough and typically receive a sufficient number of excursions, see (\ref{2.71}) - (\ref{2.75}).

\medskip
The next two lemmas will be applied in the proof of Theorem \ref{theo3.1} in Section 3 as part of the construction of the crucial random set $C_\omega$ (made of $B_0$-boxes) that will help us keep track of the cost of the bubble set (\ref{1.47}), see below (\ref{3.15}) and below (\ref{3.57}). In the statement below, {\it coordinate projection} refers to any one of the $d$ canonical projections on the respective hyperplanes of points with vanishing $i$-th coordinate, for $1 \le i \le d$.

\begin{lemma}\label{lem1.1}
Given $K \ge 100d$, $a \in (0,1)$, then for large $N$, for any $L$-box $B$ with $L \ge L_1$, and any set $A$ union of $B_0$-boxes contained in $B$ such that for a coordinate projection $\pi$ one has
\begin{equation}\label{1.11}
|\pi(A) | \ge a\, |B|^{\frac{d-1}{d}},
\end{equation}
one can find a subset $\wt{A}$ of $A$, which is a union of $B_0$-boxes having base points with respective $\pi$-projections at mutual supremum distance at least $\ov{K} L_0$ (with $\ov{K} = 2 K+3)$, and such that
\begin{equation}\label{1.12}
{\rm cap}(\wt{A}) \ge c(a) \,|B|^{\frac{d-2}{d}} \;\mbox{and} \;\; | \pi(\wt{A})| \ge \ov{K}\;\!^{-(d-1)} |\pi (A)|.
\end{equation}
\end{lemma}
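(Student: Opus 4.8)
The plan is to select the subset $\wt A$ of $A$ by a greedy/sparsification procedure inside the projection hyperplane, keeping one $B_0$-box per well-separated cell, and then lower-bound the capacity of the resulting sparse set from below by summing the individual box capacities, using that well-spaced sets behave almost like disjoint ones. First I would fix the coordinate projection $\pi$ furnished by the hypothesis, say onto the hyperplane $\{x_d=0\}$ for concreteness, and partition that hyperplane into cells which are translates of $[0,\ov K L_0)^{d-1}$ along the lattice $\ov K L_0\,\IZ^{d-1}$ (with $\ov K=2K+3$ as in the statement). Since $A$ is a union of $B_0$-boxes, $\pi(A)$ is a union of $(d-1)$-dimensional $L_0$-faces; from each nonempty cell pick exactly one such face, and let $\wt A\subseteq A$ consist, for each chosen face, of one $B_0$-box of $A$ whose $\pi$-image is that face. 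By construction the base points of the chosen boxes have $\pi$-projections lying in distinct cells of side $\ov K L_0$, hence at mutual sup-distance at least $\ov K L_0$, which is the required spacing (and incidentally this also keeps the $B_0$-boxes themselves at sup-distance at least $(\ov K-1)L_0\ge 2KL_0$ in the $d$-dimensional sense). The cardinality bound $|\pi(\wt A)|\ge \ov K^{-(d-1)}|\pi(A)|$ is immediate: each cell contributes $L_0^{d-1}$ to $|\pi(\wt A)|$ and contains at most $(\ov K L_0/L_0)^{d-1}=\ov K^{d-1}$ of the $L_0$-faces making up $\pi(A)$, so a counting argument over cells gives the inequality.

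Next I would turn to the capacity lower bound, which is the real point. Write $\wt A=\bigsqcup_{j} Q_j$ where each $Q_j$ is a $B_0$-box, and $n=$ (number of $Q_j$'s) $\ge \ov K^{-(d-1)}\,|\pi(A)|/L_0^{d-1}\ge c(a)\,|B|^{(d-1)/d}/L_0^{d-1}$ by the hypothesis $|\pi(A)|\ge a|B|^{(d-1)/d}$. A clean way to get a lower bound on $\mathrm{cap}(\wt A)$ is the variational (Dirichlet-form) characterization: $\mathrm{cap}(\wt A)=\big(\inf_\mu \langle \mu, G\mu\rangle\big)^{-1}$ over probability measures $\mu$ on $\wt A$, equivalently one uses $\mathrm{cap}(\wt A)\ge \big(\sum_j \mathrm{cap}(Q_j)^{-1}\cdot(\text{interaction correction})\big)$-type bounds. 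Concretely, put $\mu=\sum_j \tfrac1n \ov e_{Q_j}$, the average of the normalized equilibrium measures, a probability measure on $\wt A$. Then
\begin{equation}\label{plan-energy}
\langle \mu, G\mu\rangle=\frac1{n^2}\sum_{j}\langle \ov e_{Q_j},G\ov e_{Q_j}\rangle+\frac1{n^2}\sum_{j\ne k}\langle \ov e_{Q_j},G\ov e_{Q_k}\rangle.
\end{equation}
The diagonal terms are $\langle \ov e_{Q_j},G\ov e_{Q_j}\rangle=\mathrm{cap}(Q_j)^{-1}\le c\,L_0^{2-d}$ by \eqref{1.6}, contributing at most $c\,L_0^{2-d}/n$. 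For the off-diagonal terms, using $Ge_{Q_k}=h_{Q_k}\le 1$ and $g(x,y)\le c_*|x-y|^{2-d}$ from \eqref{1.3}, together with the fact that the $\pi$-projections of the base points are $\ov K L_0$-separated, I would bound $\langle \ov e_{Q_j},G\ov e_{Q_k}\rangle\le \sup_{x\in Q_j} h_{Q_k}(x)\le c_*\,\mathrm{cap}(Q_k)\,d_\infty(Q_j,Q_k)^{2-d}\le c\,L_0^{d-2}\,d_\infty(Q_j,Q_k)^{2-d}$, and then sum over $k$: grouping the $Q_k$ by the shell in which their projected base point sits relative to that of $Q_j$, the number at $\pi$-distance in $[m\ov KL_0,(m+1)\ov KL_0)$ is $O(m^{d-2})$ (they live in a $(d-1)$-dimensional slab, well-separated), so $\sum_{k\ne j}d_\infty(Q_j,Q_k)^{2-d}\le c\sum_{m\ge1} m^{d-2}(m\ov KL_0)^{2-d}=c\,(\ov KL_0)^{2-d}\sum_{m\ge1}m^{-1}$, which diverges logarithmically — so I would instead sum $\sum_{m\ge 1}m^{d-2}(m\ov K L_0)^{2-d}$ more carefully, noting that in fact I only need the total off-diagonal contribution to \eqref{plan-energy} to be $o(1/n)$ or at worst $O(1/n)$ after dividing by $n^2$; since there are at most $n$ choices of $j$ and the inner sum over $k$ is $O(L_0^{2-d}\log n)$, the off-diagonal part of $\langle\mu,G\mu\rangle$ is $O(L_0^{2-d}\log n/n)$.

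Putting this together, $\langle\mu,G\mu\rangle\le c\,L_0^{2-d}\log n/n$, hence $\mathrm{cap}(\wt A)\ge \langle\mu,G\mu\rangle^{-1}\ge c\,n\,L_0^{d-2}/\log n$. Now insert $n\ge c(a)|B|^{(d-1)/d}L_0^{-(d-1)}$ to get $\mathrm{cap}(\wt A)\ge c(a)\,|B|^{(d-1)/d}L_0^{-1}/\log n$. Since $L_0=[N^{1/(d-1)}]$ and $|B|\ge |B_1|\asymp L_1^d\asymp N^{2}\log N$, one checks $|B|^{(d-1)/d}L_0^{-1}=|B|^{(d-2)/d}\cdot |B|^{1/d}L_0^{-1}$ and $|B|^{1/d}\ge c L_1\gg L_0\log N\ge L_0\log n$, so the logarithmic loss and the extra $L_0^{-1}$ are absorbed and $\mathrm{cap}(\wt A)\ge c(a)|B|^{(d-2)/d}$, as claimed — this is exactly where the "for large $N$" hypothesis and the scale separation $L_1/L_0\to\infty$ (indeed $L_0=o(L_1/\log N)$) are used. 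The main obstacle is the off-diagonal estimate in \eqref{plan-energy}: one must exploit that the selected boxes are well-separated \emph{in a $(d-1)$-dimensional projection} (so that the shell counts grow like $m^{d-2}$, borderline for convergence of $\sum m^{d-2}\cdot m^{2-d}$) and accept a harmless $\log n$ factor, which the generous gap between the scales $L_0$ and $L_1$ then swallows; alternatively one can avoid the log by a slightly smarter choice of $\mu$ (weights decaying away from a core, or a second sparsification keeping only $\asymp n$ boxes with all pairwise off-diagonal interactions summable), but the crude bound above already suffices. The $(\alpha,\beta,\gamma)$-good-box machinery plays no role here; this lemma is purely about capacities of unions of lattice boxes, and the cited result of \cite{AsseScha20} (Lemma 1.2) may be invoked in place of the hand computation of the off-diagonal sum if a cleaner "capacity of a well-spaced union $\gtrsim$ sum of capacities" statement is available there.
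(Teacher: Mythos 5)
Your proposal follows essentially the same route as the paper: trim $A$ to one $B_0$-box per sparse column, take $\mu$ as the equal-weight average of the normalized equilibrium measures $\ov e_{Q_j}$, and bound $\langle\mu,G\mu\rangle$ by splitting into diagonal and off-diagonal parts, the latter via a shell count in the projection hyperplane. But three steps as written do not hold up. First, picking one box per cell of side $\ov K L_0$ does not give $\pi$-projections at mutual sup-distance $\ge\ov K L_0$: boxes in adjacent cells can have base points at distance as small as $L_0$. The paper's construction selects one box per column and then splits the columns into $\ov K^{d-1}$ sub-collections of $\ov K L_0$-spaced columns, extracting the largest one by pigeonhole; that is what simultaneously delivers the separation and the cardinality bound in (\ref{1.12}). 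Second, $\langle\ov e_{Q_j},G\ov e_{Q_k}\rangle \ne \sup_{Q_j}h_{Q_k}$: since $G\ov e_{Q_k}=h_{Q_k}/\mathrm{cap}(Q_k)$, the correct estimate is $\langle\ov e_{Q_j},G\ov e_{Q_k}\rangle=\langle\ov e_{Q_j}\otimes\ov e_{Q_k},g\rangle\le\sup_{x\in Q_j,y\in Q_k}g(x,y)\le c_*\,d_\infty(Q_j,Q_k)^{2-d}$ (this is (\ref{1.17}) in the paper), with no $\mathrm{cap}(Q_k)\asymp L_0^{d-2}$ factor.

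Third, and most importantly, the shell-sum arithmetic is wrong: $m^{d-2}(m\ov K L_0)^{2-d}=(\ov K L_0)^{2-d}m^{0}$, not $(\ov K L_0)^{2-d}m^{-1}$, so $\sum_{0\ne z\in\IZ^{d-1},\,|z|\le M}|z|^{-(d-2)}$ grows linearly in $M$, not logarithmically: each shell contributes $\asymp 1$. This linear divergence is exactly what makes the lemma delicate; the result depends on cutting the sum off at the right scale, and no log ever appears. The paper's device (see (\ref{1.18})--(\ref{1.20})) is to compare with the Euclidean ball $\wt B$ in $\ov K L_0\,\IZ^{d-1}$ containing exactly $\wt n$ points (radius $\wt R\asymp \ov K L_0\,\wt n^{1/(d-1)}\gtrsim c(a)\,L$), which gives $\langle\mu\otimes\mu,g\rangle\le c\,\wt R^{-(d-2)}$ and hence $\mathrm{cap}(\wt A)\gtrsim \wt R^{\,d-2}\gtrsim c(a)\,L^{d-2}$ cleanly. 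Your closing step of absorbing a $\log n$ factor using $L_1/L_0\gg\log N$ is an artifact of the two arithmetic slips: once they are corrected, there is no logarithm to absorb, and the separation of scales enters only through $L\ge L_1\gg K^{d-1}L_0$ as in (\ref{1.23}). As a matter of fact, your stated off-diagonal bound $O(L_0^{2-d}\log n/n)$ is strictly smaller than the true off-diagonal contribution, so the argument as written is over-optimistic: it would ``close'' even in situations where the genuine estimate is tight.
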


\begin{proof}
We first trim $A$ and pick one $B_0$-box in $A$ in each column in the $\pi$-direction that intersects $A$. The columns of $B_0$-boxes in the $\pi$-direction can be split into $\ov{K}\;\!^{d-1}$ collections of $\ov{K} L_0$-spaced columns. Then, we restrict this trimmed set to one of the $\ov{K}\;\!^{d-1}$ collections so as to obtain
\begin{equation}\label{1.13}
\left\{ \begin{array}{l}
\mbox{$\wt{A}$ subset of $A$ that contains at most one $B_0$-box per column, in columns that} 
\\
\mbox{are $\ov{K} L_0$-spaced, with $\pi$-projection $\pi (\wt{A})$ such that $| \pi(\wt{A})| \ge \ov{K}\;\!^{-(d-1)} | \pi (A)|$}.
\end{array}\right.
\end{equation}
We then introduce the probability measure $\mu$ supported by $\wt{A}$ (see below (\ref{1.4}) for notation):
\begin{equation}\label{1.14}
\mu = \dis\frac{1}{\wt{n}} \; \dsl_{B_0 \subseteq \wt{A}} \, \ov{e}_{B_0}, \; \mbox{with} \; \wt{n} = \dis\frac{|\wt{A}|}{|B_0|} \ge a \ov{K}\;\!^{-(d-1)} \Big(\dis\frac{|B|}{|B_0|}\Big)^{\frac{d-1}{d}} \;\mbox{(by (\ref{1.11}), (\ref{1.13}))}.
\end{equation}
One has the variational identity ${\rm cap}(\wt{A}) = \sup\{\langle \rho \otimes \rho, g \rangle^{-1}$; $\rho$ probability measure supported by $\wt{A}\}$ (where $\otimes$ denotes the product of measures and $g$ the Green function as in (\ref{1.1})). Hence, ${\rm cap} (\wt{A}) \ge \langle \mu \otimes \mu, g\rangle^{-1}$, and our aim is now to bound $\langle \mu \otimes \mu, g\rangle$ from above in order to prove (\ref{1.12}). We first write with hopefully obvious notation
\begin{equation}\label{1.15}
\begin{split}
\langle \mu \otimes \mu, g \rangle & \le \sup\limits_{B_0 \subseteq \wt{A}} \;\dis\frac{1}{\wt{n}} \; \dsl_{B'_0 \subseteq \wt{A}} \langle \ov{e}_{B_0} \otimes \ov{e}_{B'_0}, g\rangle
\\
& \le \sup\limits_{B_0 \subseteq \wt{A}} \;\dis\frac{1}{\wt{n}} \; (\langle \ov{e}_{B_0} \otimes \ov{e}_{B_0}, g\rangle + \dsl_{B'_0 \subseteq \wt{A}, B'_0 \not= B_0} \langle \ov{e}_{B_0} \otimes \ov{e}_{B'_0}, g \rangle).
\end{split}
\end{equation}
We note that by (\ref{1.4})
\begin{equation}\label{1.16}
\langle \ov{e}_{B_0} \otimes \ov{e}_{B_0}, g\rangle = {\rm cap}(B_0)^{-1},
\end{equation}
and for the second term in the last line of (\ref{1.15}), setting $x_0$ as the unique point in $\IL_0 \cap B_0$, see (\ref{1.9}), $y_0 = \pi (x_0)$, and likewise $x'_0$ as the unique point in the $\IL_0 \cap B'_0$, $y'_0 = \pi(x'_0)$, we see that for any $B_0 \subseteq \wt{A}$ by (\ref{1.3})
\begin{equation}\label{1.17}
\dis\frac{1}{\wt{n}} \; \dsl_{B'_0 \subseteq \wt{A}, B'_0 \not= B_0} \langle \ov{e}_{B_0} \otimes \ov{e}_{B'_0}, g \rangle \le \dis\frac{c}{\wt{n}}  \dsl_{B'_0 \subseteq \wt{A}, B'_0 \not= B_0} |y'_0 - y_0|^{-(d-2)} \stackrel{\rm def}{=} S_{B_0},
\end{equation}
where it should be observed that due to (\ref{1.13}), $y'_0 - y_0 \in \ov{K} L_0 \,\IZ^{d-1}$ in the last sum (and we have tacitly identified $\pi(\IZ^d)$ with $\IZ^{d-1}$). We then consider
\begin{equation}\label{1.18}
\begin{array}{l}
\mbox{$\wt{B}$ the Euclidean ball in $\ov{K} L_0 \, \IZ^{d-1}$ with center $0$ and smallest radius $\wt{R}$}\\
\mbox{such that $\wt{B}$ contains $\wt{n}$ points.}
\end{array}
\end{equation}

\medskip\n
Note that due to the lower bound on $\wt{n}$ in (\ref{1.14}), for large $N$, one has
\begin{equation}\label{1.19}
c \,\wt{n} \ge \Big(\dis\frac{\wt{R}}{\ov{K} L_0}\Big)^{d-1} \ge c' \, \wt{n}.
\end{equation}
Looking whether $y'_0 - y_0$ lies in $\wt{B}$ or outside $\wt{B}$ in the sum defining $S_{B_0}$ in (\ref{1.17}) we thus find that for any $B_0 \subseteq \wt{A}$
\begin{equation}\label{1.20}
\begin{split}
S_{B_0} & \le \dis\frac{c}{\wt{n}} \; \dsl_{0 \not= y \in \wt{B} \cap (\ov{K} L_0 \,\IZ^{d-1})} |y|^{-(d-2)} \le \dis\frac{c'}{\wt{n}} \;(\ov{K} L_0)^{-(d-2)} \dsl_{1 \le \ell \le c \wt{n}^{1/(d-1)}} 1
\\
& \le c''\,(\ov{K} L_0)^{-(d-2)} \,\wt{n}^{-\frac{d-2}{d-1}} \; \stackrel{(\ref{1.19})}{\le} c\,\wt{R}^{-(d-2)}.
\end{split}
\end{equation}
Thus, coming back to (\ref{1.15}), we find with (\ref{1.16}), (\ref{1.17}) and the above bound that
\begin{equation}\label{1.21}
\langle \mu \otimes \mu, g \rangle \le \dis\frac{c}{\wt{n} L_0^{d-2}} + \dis\frac{c}{\wt{R}^{d-2}} \stackrel{(\ref{1.19})}{\le} c'' \; \dis\frac{\ov{K}\!\,^{d-1} L_0}{\wt{R}^{d-1}} + \dis\frac{c}{\wt{R}^{d-2}} \le \dis\frac{c}{\wt{R}^{d-2}} \,\Big(1 + \wt{c} \; \dis\frac{\ov{K}\!\,^{d-1} L_0}{\wt{R}}\Big).
\end{equation}
This shows that for large $N$,
\begin{equation}\label{1.22}
{\rm cap}(\wt{A}) \ge \langle \mu \otimes \mu, g \rangle^{-1} \ge c \,\wt{R}^{d-2} (1 + c' \,K^{d-1} L_0 / \wt{R})^{-1}.
\end{equation}
We also know by (\ref{1.19}) that for large $N$
\begin{equation}\label{1.23}
\wt{R} \ge c\, \ov{K} L_0 \,\wt{n}^{\frac{1}{d-1}} \stackrel{(\ref{1.14})}{\ge} c \,a^{\frac{1}{d-1}} \,L \stackrel{L \ge L_1}{\ge} K^{d-1} L_0,
\end{equation}
and we find that
\begin{equation}\label{1.24}
{\rm cap}(\wt{A}) \ge c' \wt{R}^{d-2} \stackrel{(\ref{1.23})}{\ge} c(a) \,L^{d-2} = c(a) \,|B|^{\frac{d-2}{d}}.
\end{equation}
Together with (\ref{1.13}), this completes the proof of (\ref{1.12}) and hence of Lemma \ref{lem1.1}.
\end{proof}

The next lemma is due to \cite{AsseScha20}, and it will also be used in the construction of the random set $C_\omega$ in Theorem \ref{theo3.1} of Section 3. 

\begin{lemma}\label{lem1.2} $(\ov{K} = 2 K + 3)$

\medskip\n
For $K,N \ge c_1$, when $\wt{A}$ is a union of $B_0$-boxes with base points that are at mutual $| \cdot |_\infty$-distance at least $\ov{K} L_0$, then there exists a union of $B_0$-boxes $A' \subseteq \wt{A}$ such that
\begin{equation}\label{1.25}
{\rm cap}(A') \ge  c\, {\rm cap}(\wt{A})  \;\,\mbox{and}\; \,\dis\frac{|A'|}{|B_0|} \le c' \dis\frac{{\rm cap}(\wt{A})}{{\rm cap}(B_0)}. 
\end{equation}
\end{lemma}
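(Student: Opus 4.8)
The plan is to exhibit $A'$ as a subunion of $B_0$-boxes of $\wt A$ selected by a greedy "peeling" procedure so that the selected boxes carry a definite fraction of the capacity of $\wt A$ while keeping their number proportional to ${\rm cap}(\wt A)/{\rm cap}(B_0)$. First I would recall the sub-additivity and super-additivity estimates for capacity of unions of well-separated sets: for any collection of sets one has ${\rm cap}(\bigcup_j F_j)\le\sum_j{\rm cap}(F_j)$, and, crucially, for $B_0$-boxes with base points at mutual $|\cdot|_\infty$-distance at least $\ov K L_0$ one has a comparison of the equilibrium measure $e_{\wt A}$ with the sum $\sum_{B_0\subseteq\wt A} e_{B_0}$. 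Concretely, write $n=|\wt A|/|B_0|$ for the number of boxes, so that $n\,{\rm cap}(B_0)$ is, up to constants (by \eqref{1.6} and sub-additivity), an upper bound for ${\rm cap}(\wt A)$; the content of the lemma is the matching selection. I would use the variational characterization ${\rm cap}(\wt A)=\sup\{\langle\rho\otimes\rho,g\rangle^{-1}\}$ over probability measures $\rho$ supported on $\wt A$, together with the fact that the equilibrium potential $h_{\wt A}$ equals $1$ on $\wt A$ and $G e_{\wt A}=h_{\wt A}$.

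The core step: let $\rho=e_{\wt A}/{\rm cap}(\wt A)$ be the normalized equilibrium measure and consider its masses $m_{B_0}=e_{\wt A}(B_0)$ on the individual boxes, so $\sum_{B_0}m_{B_0}={\rm cap}(\wt A)$. Order the boxes by decreasing mass and let $A'$ be the union of the $\lceil c'\,{\rm cap}(\wt A)/{\rm cap}(B_0)\rceil$ boxes of largest mass; since each box satisfies $m_{B_0}\le{\rm cap}(B_0)$ (as $e_{\wt A}$ restricted to $B_0$ is dominated in the appropriate sense by $e_{B_0}$, using $h_{\wt A}\ge h_{B_0}$ is the wrong direction, so instead one uses $e_{\wt A}(B_0)=\sum_{x\in B_0}e_{\wt A}(x)\le\sum_{x\in B_0}e_{B_0}(x)={\rm cap}(B_0)$, valid because $P_x[H_{\wt A}=\infty]\le P_x[H_{B_0}=\infty]$ only after one passes through the last-exit decomposition — this is the point that needs the well-separation), the excluded boxes carry total mass at most $(n-\#A')\,{\rm cap}(B_0)$. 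Then I would estimate ${\rm cap}(A')$ from below: since $e_{\wt A}|_{A'}$ is a sub-equilibrium measure for $A'$, one has ${\rm cap}(A')\ge e_{\wt A}(A')=\sum_{B_0\subseteq A'}m_{B_0}\ge{\rm cap}(\wt A)-(n-\#A')\,{\rm cap}(B_0)$. The well-separation hypothesis enters to guarantee that $n\,{\rm cap}(B_0)\le C\,{\rm cap}(\wt A)$ (super-additivity up to a constant for $\ov K L_0$-separated boxes, which is exactly where the lemma of \cite{AsseScha20} does its work), so choosing $c'$ large enough forces $(n-\#A')\,{\rm cap}(B_0)\le\frac12{\rm cap}(\wt A)$, giving ${\rm cap}(A')\ge\frac12{\rm cap}(\wt A)$, and the cardinality bound $|A'|/|B_0|=\#A'\le c'\,{\rm cap}(\wt A)/{\rm cap}(B_0)$ is built into the construction.

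The main obstacle I expect is establishing the \emph{super-additivity up to a constant} for the capacities of $\ov K L_0$-separated $B_0$-boxes, equivalently that $e_{\wt A}(B_0)\ge c\,{\rm cap}(B_0)$ on average, or that $\langle e_{\wt A},1\rangle\ge c\sum_{B_0}{\rm cap}(B_0)$. This is precisely the quantitative content extracted from \cite{AsseScha20} and relies on controlling the Green-function interaction $\sum_{B_0'\neq B_0}g(x,y_{B_0'})$ for $x\in B_0$ by a geometric-series argument over shells at scales $\ov K L_0, 2\ov K L_0,\dots$, whose sum is $O((\ov K L_0)^{-(d-2)}\cdot\#\text{boxes in a shell})$ and is made small relative to ${\rm cap}(B_0)^{-1}\sim L_0^{-(d-2)}$ once $\ov K=2K+3$ is large (hence the hypothesis $K\ge c_1$). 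Everything else — the variational principle, sub-additivity, the last-exit decomposition bound $e_{\wt A}(B_0)\le{\rm cap}(B_0)$, and the greedy selection — is routine once that separation estimate is in hand. I would therefore present the separation/interaction estimate first as the key lemma, then deduce the selection argument as above.
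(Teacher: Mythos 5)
The paper's own proof is a one-line citation to Theorem 1.4 of \cite{AsseScha20}; it makes no attempt at a self-contained argument. Your proposal does attempt one, and the greedy-selection idea (order boxes by equilibrium mass $m_{B_0}=e_{\wt A}(B_0)$, keep the top $\lceil c'\,{\rm cap}(\wt A)/{\rm cap}(B_0)\rceil$, and use that the restriction of $e_{\wt A}$ to $A'$ is a sub-equilibrium measure so that ${\rm cap}(A')\ge e_{\wt A}(A')$) is a sensible starting point, and in fact fairly close in spirit to what \cite{AsseScha20} does. But the argument as written has a genuine gap at the step where you bound the discarded mass.

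The place where the proof fails is the claim that well-separation forces super-additivity up to a constant, i.e.\ $n\,{\rm cap}(B_0)\le C\,{\rm cap}(\wt A)$ with $n=|\wt A|/|B_0|$. This is false. Take $n$ $B_0$-boxes with base points spaced $\ov K L_0$ apart along a coordinate axis; their union sits inside a tube of length $\sim n\ov KL_0$ and thickness $L_0$, so ${\rm cap}(\wt A)\le c\,(n\ov KL_0)^{d-2}$ wildly undershoots $n\,{\rm cap}(B_0)\sim nL_0^{d-2}$ once $n$ is large (already for $d=3$, $n$ versus $n\log n$ after normalizing, and worse for $d\ge 4$). Indeed, if that super-additivity held, the lemma would be vacuous: one could simply take $A'=\wt A$, since $|\wt A|/|B_0|=n\le C\,{\rm cap}(\wt A)/{\rm cap}(B_0)$ would already hold. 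The whole content of Lemma \ref{lem1.2} is to cope with exactly the opposite situation, where $n$ may be far larger than ${\rm cap}(\wt A)/{\rm cap}(B_0)$ and one must extract a small subcollection carrying most of the capacity. Your bound on the excluded boxes, $(n-\#A')\,{\rm cap}(B_0)\le\tfrac12{\rm cap}(\wt A)$, is equivalent to that false super-additivity, so the conclusion ${\rm cap}(A')\ge\tfrac12{\rm cap}(\wt A)$ does not follow. (Separately, and less important: the inequality $e_{\wt A}(B_0)\le{\rm cap}(B_0)$ is elementary monotonicity of escape probabilities, $e_{\wt A}(x)=P_x[\wt H_{\wt A}=\infty]\le P_x[\wt H_{B_0}=\infty]=e_{B_0}(x)$ for $x\in B_0\subseteq\wt A$, and does not require the separation hypothesis.) To make the greedy selection actually work one needs a non-trivial estimate, precisely the one proved in Theorem 1.4 of \cite{AsseScha20}, and not a soft sub-/super-additivity bound; the paper therefore cites that result rather than reproving it.
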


\begin{proof}
The claim is a straightforward consequence of Theorem 1.4 of \cite{AsseScha20}. 
\end{proof}

We will now introduce some notation and collect several facts concerning random interlacements. We also refer to the end of Section 1 of \cite{Szni17} and the references therein for more details. The random interlacements $\cI^u$, $u \ge 0$, and the corresponding vacant sets $\cV^u = \IZ^d \backslash \cI^u, u \ge 0$, are defined on a certain probability space denoted by $(\Omega, \cA, \IP)$. In essence, $\cI^u$ corresponds to the trace left on $\IZ^d$ by a certain Poisson point process of doubly infinite trajectories modulo time-shift that tend to infinity at positive and negative infinite times, with intensity proportional to $u$. As $u$ grows, $\cV^u$ becomes thinner and there is a critical value $u_* \in (0, \infty)$ such that for $u < u_*$, $\IP$-a.s., $\cV^u$ has an infinite component, and for $u > u_*$ all components of $\cV^u$ are finite, see \cite{Szni10}, \cite{SidoSzni09}, as well as the monographs \cite{CernTeix12}, \cite{DrewRathSapo14c}.

\medskip
In this work we are mainly interested in the strong percolative regime of $\cV^u$ that will correspond to $u < \ov{u}$, where following (2.3) of \cite{Szni17} we set
\begin{equation}\label{1.26}
\mbox{$\ov{u} = \sup\{ s > 0$, such that for all $v < u$ in $(0,s)$, (\ref{1.27}) and (\ref{1.28}) hold$\}$},
\end{equation}
where writing $B = [0,L)^d$, the condition (\ref{1.27}) is
\begin{equation}\label{1.27}
\mbox{$\lim\limits_L \;\dis\frac{1}{\log L} \; \log \IP[\cV^u \cap B$ has no component of diameter at least $\frac{L}{10}] = - \infty$,}
\end{equation}
and the condition (\ref{1.28}) is that for all $B' = L \, e + B$ with $|e| = 1$, with $D = [-3L, 4L)^d$ (a subset of $\IZ^d$ not to be confused with (\ref{0.4})):
\begin{align}
\lim\limits_L \; \dis\frac{1}{\log L} \; \log \IP\; [ & \mbox{there exist connected components of $B \cap \cV^u$ and $B' \cap \cV^u$ of}\label{1.28}
\\[-2ex]
& \mbox{diameter at least $\frac{L}{10}$, which are not connected in $D \cap \cV^v] = - \infty$}. \nonumber
\end{align}
One knows that $\ov{u} > 0$ (by \cite{DrewRathSapo14a}) and that $\ov{u} \le u_*$, see (2.4), (2.6) of \cite{Szni17}. Also as explained in Remark 2.1 1) of \cite{Szni17},
\begin{align}
& \mbox{for $u > v$ in $(0, \ov{u})$},   \label{1.29}
\\
& \lim\limits_L \; \mbox{\f $\dis\frac{1}{\log L}$} \; \log \IP  \;  [ \mbox{there exist two connected components of $B \cap \cV^u$ of}   \nonumber
\\[-2ex]
&\qquad \qquad \qquad \quad  \;\; \mbox{diameter at least $\frac{L}{10}$, which are not connected in $D \cap \cV^v] = - \infty$.}\nonumber
\end{align}

\begin{remark}\label{rem1.3} \rm
Let us also mention that with $\wt{B} = [-L,2L)^d$ and $D$ as above
\begin{align}
&\mbox{for $v > w$ in $(0, \ov{u})$},\label{1.30}
\\
& \lim\limits_L \; \mbox{\f $\dis\frac{1}{\log L} $}\; \log \IP \;[ \mbox{there exist two connected components of $\wt{B} \cap \cV^v$ of}\nonumber
\\[-2ex]
&\qquad \qquad \qquad \quad  \mbox{diameter at least $\frac{L}{10}$, which are not connected in $D \cap \cV^w] = - \infty$.}\nonumber
\end{align}
Indeed, to prove (\ref{1.30}), one considers the scales $L'' = [L / 10^3] \le L' = [L/ 10^2] \le L$. Given $v > w$ in $(0,\ov{u})$, except on a set of super-polynomially decaying probability in $L$, for all boxes $z + [0,L'')^d$, $z \in \IZ^d$, intersecting $[-2L, 3L)^d$, and with $v, \frac{v+ w}{2}$ in place of $u,v$ all the events corresponding to the complement of what appears in (\ref{1.27}), (\ref{1.28}) are satisfied, as well as for all boxes $z + [0,L')^d$, $z \in \IZ^d$, intersecting $[-2L, 3L)^d$, with $\frac{v + w}{2}$, $w$ in place of $u,v$, the events corresponding to the complement of (\ref{1.29}) are satisfied. Then, for large $L$ on the intersection of the above events, given $A_1, A_2$ connected components of $\wt{B} \cap \cV^v$ with diameter at least $\frac{L}{10}$, one can construct a path of non-intersecting nearest neighbor $L''$-boxes in $[-2L,3L)^d$, such that the restriction of $A_1$ to the first box contains a connected component of diameter at least $\frac{L''}{10}$ and the last box meets $A_2$. Then, one can construct a path in $\cV^{\frac{v + w}{2}} \cap D$ starting in $A_1$ with an end point at supremum distance at most $L''$ from $A_2$ belonging to the last box of the path of $L''$-boxes. One can then consider an $L'$-box with center (in $\IR^d$) within supremum distance $1$ from the center of last box in the path of $L''$-boxes, and link $A_2$ in $\cV^w \cap D$ with the path in $\cV^{\frac{v+w}{2}} \cap D$ that linked $A_1$ to a point of the last box of the path of $L''$-boxes. This provides a path in $\cV^w \cap D$ between $A_1$ and $A_2$. The claim (\ref{1.30}) now follows. \hfill $\square$

\end{remark}

The equality $\ov{u} = u_*$ is expected but presently open. In the closely related model of the level-set percolation for the Gaussian free field a corresponding equality can be proved as shown in the recent work  \cite{DumiGoswRodrSeve19}.

\medskip
An additional critical level $\wh{u} \in (0,u_*]$ has been helpful in the study of the $C^1$-property of the percolation function $\theta_0$ in (\ref{0.2}), see Theorem 1 of \cite{Szni19c}. In the present work, it will show up in the context of Corollary \ref{cor4.3}, see (0.12), when we identify the exponential rate of decay of $\IP[\cA_N]$ (or $\IP[\cA_N^0]$) for $\nu > \theta_0(u)$ close to $\theta_0(u)$ when $0 < u < \ov{u} \wedge \wh{u}$. It is defined as (see (3) of \cite{Szni19c}):
\begin{equation}\label{1.31}
\wh{u} = \sup\{ u \in [0,u_*) ; NLF(0,u) \; \mbox{holds}\}
\end{equation}
where for $0 \le v < u_*$, $NLF(0,v)$, i.e.~the {\it no large finite cluster property} on $[0,v]$, is defined as
\begin{equation}\label{1.32}
\begin{array}{l}
\mbox{there exists $L(v) \ge 1$, $c(v) > 0$, $\gamma(v) \in (0,1]$ such that (with $S(0,L) = \partial_i B(0,L)$),}\\
\mbox{for all $L \ge L(v)$ and $0 \le w \le v$, $\IP [0 \stackrel{w}{\longleftrightarrow} S(0,L), \; 0\stackrel{w}{\longleftrightarrow} \hspace{-2.6ex}\mbox{\scriptsize $/$} \;\; \;\infty] \le e^{-c(v) L^{\gamma(v)}}$}.
\end{array}
\end{equation}
One knows by Corollary 1.2 of \cite{DrewRathSapo14a} that $\wh{u} > 0$, and by Theorem 1 of \cite{Szni19c} that 
\begin{equation}\label{1.33}
\mbox{$\theta_0$ is $C^1$ and has positive derivative on $[0,\wh{u})$}.
\end{equation}
It is plausible but presently open that the equalities $\ov{u} = \wh{u} = u_*$ hold.

\medskip
We now introduce further boxes related to the length scale $L_0$, which take part in the definition of the important random set $\cU_1$ defined in (\ref{1.40}) below and in the proof of Theorem \ref{theo2.1} in the next section. Throughout the integer $K$ implicitly satisfies
\begin{equation}\label{1.34}
K \ge 100 .
\end{equation}
In the spirit of (2.9), (2.10) of \cite{Szni17}, we consider the boxes
\begin{equation}\label{1.35}
\begin{split}
B_{0,z} & = z + [0,L_0)^d \subseteq \wt{B}_{0,z} = z + [-L_0, 2 L_0)^d \subseteq D_{0,z} = z + [-3 L_0, 4 L_0)^d
\\
& \subseteq U_{0,z} = z + [-KL_0 + 1, K L_0 - 1)^d, \; \mbox{with} \; z \in \IL_0 (= L_0 \IZ^d).
\end{split}
\end{equation}
Given a  box $B_0$ as above and the corresponding $D_0$, we denote by $Z^{D_0}_\ell$, $\ell \ge 1$, the successive excursions in the interlacements that go from $D_0$ to $\partial U_0$, see (1.41) of \cite{Szni17}. We then denote by (see also (2.14) and (1.42) of \cite{Szni17}):
\begin{equation}\label{1.36}
\begin{split}
N_v (D_0)  = &\; \mbox{the number of excursions from $D_0$ to $\partial U_0$ in the interlacement}
\\
&\; \mbox{trajectories with level at most $v$, for $v \ge 0$}.
\end{split}
\end{equation}
The notion of $(\alpha, \beta, \gamma)$-good boxes that we now recall is an important ingredient in the definition of the random set $\cU_1$, see (\ref{1.40}) below. We consider
\begin{equation}\label{1.37}
\alpha > \beta > \gamma \; \mbox{in} \; (0,\ov{u})
\end{equation}

\n
(eventually we will choose them close to $\ov{u}$, see (4.8) in Section 4).

\medskip
Given an $L_0$-box $B_0$ and the corresponding $D_0$ (and likewise $D'_0$ corresponding to $B'_0$ below), see (\ref{1.35}), we say that $B_0$ is an $(\alpha, \beta,\gamma)${\it -good box} (see (2.11) - (2.13) of \cite{Szni17}) if:
\begin{equation}\label{1.38}
\left\{ \begin{array}{rl}
{\rm i)} & \mbox{$B_0 \backslash ({\rm range} \, Z_1^{D_0} \cup \dots \cup {\rm range} \, Z^{D_0}_{\alpha \,{\rm cap}(D_0)})$ contains a connected set with}
\\
& \mbox{diameter at least $\frac{L_0}{10}$ (and the set in parenthesis is empty if $\alpha \,{\rm cap}(D_0) < 1$)},
\\[2ex]
{\rm ii)} & \mbox{for any neighboring $L_0$-box $B'_0 = L_0 \,e + B_0$ with $|e| = 1$, any two connected}
\\
&\mbox{sets with diameter at least $\frac{L_0}{10}$ in $B_0 \backslash ({\rm range} \, Z_1^{D_0} \cup \dots \cup {\rm range} \, Z^{D_0}_{\alpha \,{\rm cap}(D_0)})$ and}
\\
& \mbox{$B'_0 \backslash ({\rm range} \, Z_1^{D'_0} \cup \dots \cup Z^{D'_0}_{\alpha \,{\rm cap}(D'_0)})$ are connected in}
\\
& \mbox{$D_0 \backslash ({\rm range} \, Z_1^{D_0} \cup \dots \cup  Z^{D_0}_{\beta \,{\rm cap}(D_0)})$ (with a similar convention as in i))}
\\[2ex]
{\rm iii)} & \dsl_{1 \le \ell \le \beta\,{\rm cap} (D_0)} \dis\int^{T_{U_0}}_0 \ov{e}_{D_0} \big(Z^{D_0}_\ell (s)\big)\,ds \ge \gamma \; \mbox{(with $T_{U_0}$ the exit time of $U_0$)},
\end{array}\right.
\end{equation}
and otherwise we say that $B_0$ is $(\alpha, \beta,\gamma)${\it -bad}.

\medskip
We now fix a level $u$ as in (\ref{0.1}), that is
\begin{equation}\label{1.39}
0 < u < \ov{u}
\end{equation}

\n
and following (4.27) of \cite{NitzSzni} (or (3.8) of \cite{Szni19b}), we introduce the random set $\cU_1$ as
\begin{equation}\label{1.40}
\begin{split}
\cU_1 = & \;\mbox{the union of $L_0$-boxes $B_0$ that are either contained in $([-3N, 3N]^d)^c$ or linked} 
\\
&\;\mbox{to an $L_0$-box contained in $([-3N, 3N]^d)^c$ by a path of $L_0$-boxes $B_{0,z_i}$, $0 \le i \le n$, }
\\
&\;\mbox{which are all except maybe for the last one $(\alpha,\beta,\gamma)$-good and such that}
\\
&\;N_u (D_{0,z_i}) < \beta \,{\rm cap} (D_{0,z_i}).
\end{split}
\end{equation}

In addition, as shown in Lemma 6.1 of \cite{Szni17}, one has the following connectivity property:
\begin{equation}\label{1.41}
\left\{ \begin{array}{l}
\mbox{if $B_{0,z_i}$, $0 \le i \le n$, is a sequence of neighboring $L_0$-boxes which are}
\\
\mbox{$(\alpha,\beta,\gamma)$-good, and $N_u (D_{0,z_i}) < \beta \,{\rm cap}(D_{0,z_i})$, for $0 \le i \le n$, then, for}
\\
\mbox{any connected set in $B_{0,z_0} \backslash ({\rm range} \,Z_1^{D_{0,z_0}} \cup \dots \cup {\rm range} \,Z^{D_{0,z_0}}_{\alpha \, {\rm cap}(D_{0,z_0})}$) with}
\\
\mbox{diameter at least $\frac{L_0}{10}$, there is a path starting in this set, contained in}
\\
\mbox{$\big(\bigcup\limits_{0 \le i \le n} D_{0,z_i}\big) \cap \cV^u$, and ending in $B_{0,z_n}$}.
\end{array}\right.
\end{equation}

\n
Thus, in view of (\ref{1.40}) and (\ref{1.41}), the random set $\cU_1$ provides paths in $\cV^u$ going from any $B_0$-box in $\cU_1 \cap D_N$ to $([-3N + L_0, 3N - L_0]^d)^c$ (and such paths go through $S_{2N}$).

\medskip
We will use the notation $\partial_{B_0} \,\cU_1$ to refer to the (random) collection of $B_0$-boxes that are not contained in $\cU_1$ but are neighbor of a $B_0$-box in $\cU_1$.

\medskip
We will also need  a statement quantifying the rarity of $(\alpha, \beta, \gamma)$-bad $B_0$-boxes.

\begin{lemma}\label{lem1.4}
Given $K \ge c_2 (\alpha, \beta, \gamma)$, there exists a non-negative function $\rho(L)$ depending on $\alpha, \beta, \gamma, K$, satisfying $\lim_L \rho (L) = 0$, such that
\begin{equation}\label{1.42}
\begin{array}{l}
\mbox{$\lim\limits_N \; \dis\frac{1}{N^{d-2}} \; \log \IP [\cB_N] = - \infty$, where $\cB_N$ stands for the event}
\\[2ex]
\cB_N = \{\mbox{there are more than $\rho (L_0) N^{d-2} \; (\alpha, \beta, \gamma)$-bad $B_0$-boxes}
\\
\qquad \quad \; \mbox{intersecting $[-3N, 3N]^d\}$}.
\end{array}
\end{equation}
\end{lemma}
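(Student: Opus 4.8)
The plan is to obtain a suitable exponential bound on the probability that a fixed $B_0$-box is $(\alpha,\beta,\gamma)$-bad, and then to combine this with a union-type argument over configurations of bad boxes. The starting point is the fact, available from \cite{Szni17} (in the spirit of its (2.15)--(2.17), and recalled implicitly in the strong percolativity regime $(\alpha,\beta,\gamma)\in(0,\ov u)$), that for $K\ge c_2(\alpha,\beta,\gamma)$ one has $\IP[B_{0,z}\ \mbox{is}\ (\alpha,\beta,\gamma)\mbox{-bad}]\le\rho_1(L_0)$ with $\lim_L\rho_1(L)=0$; indeed each of the three defining conditions i)--iii) in (\ref{1.38}) fails only on an event of probability tending to $0$ as $L_0\to\infty$, using (\ref{1.27}), (\ref{1.28}), (\ref{1.29}), and standard concentration for the number $N_v(D_0)$ of excursions (which is asymptotically deterministic, $\approx v\,{\rm cap}(D_0)$) together with the law of large numbers for the local time functional in iii). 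The key additional input, again from \cite{Szni17}, is a \emph{finite-range / conditional independence} structure: whether $B_{0,z}$ is $(\alpha,\beta,\gamma)$-good is measurable with respect to the interlacement excursions in a bounded neighbourhood $U_{0,z}$ of $B_{0,z}$, and after suitably decoupling (via the soft local time coupling, or via the sprinkled decoupling inequalities) the events ``$B_{0,z_i}$ is bad'' for a family of $B_0$-boxes whose enlarged boxes $U_{0,z_i}$ are pairwise disjoint can be controlled by a product of the single-box bounds.

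With this in hand I would argue as follows. Set $m=m(N)=\rho(L_0)N^{d-2}$ for a function $\rho$ to be chosen (going to $0$ slower than $\rho_1(L_0)^{1/2}$, say $\rho(L)=\rho_1(L)^{1/2}$), and estimate
\begin{equation*}
\IP[\cB_N]\le\sum_{\substack{\cF\ \mbox{\scriptsize family of}\ B_0\mbox{\scriptsize -boxes}\\ |\cF|=\lceil m\rceil,\ \cF\cap[-3N,3N]^d\ne\emptyset}}\IP\big[\mbox{all boxes in }\cF\mbox{ are }(\alpha,\beta,\gamma)\mbox{-bad}\big].
\end{equation*}
From each such $\cF$ one extracts a sub-family $\cF'\subseteq\cF$ of cardinality at least $c(K)|\cF|$ whose enlarged boxes $U_{0,z}$, $z\in\cF'$, are pairwise disjoint (this is a greedy/colouring step: the $U_0$-boxes around a given $B_0$-box meet only boundedly many others, the bound depending on $K$ only). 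By the decoupling described above, the probability that all boxes in $\cF'$ are bad is at most $\big(\rho_1(L_0)+o(1)\big)^{|\cF'|}\le \rho_1(L_0)^{c(K)m/2}$ for large $N$ (absorbing the decoupling error into the $o(1)$). The number of families $\cF$ is at most $\binom{cN^d/L_0^d}{\lceil m\rceil}\le (cN^d/L_0^d)^{m}=\exp\big(m\log(cN^d/L_0^d)\big)$, and since $N^d/L_0^d\le N^{d}$ this is $\exp\big(O(m\log N)\big)=\exp\big(O(\rho(L_0)N^{d-2}\log N)\big)$. Putting the two estimates together,
\begin{equation*}
\frac1{N^{d-2}}\log\IP[\cB_N]\le C\rho(L_0)\log N+\tfrac{c(K)}{2}\rho(L_0)\log\rho_1(L_0)=\rho(L_0)\Big(C\log N+\tfrac{c(K)}{2}\log\rho_1(L_0)\Big).
\end{equation*}
Because $L_0=[N^{1/(d-1)}]$ so $\log N$ and $|\log L_0|$ are comparable, and because $\rho_1(L_0)\to0$ (indeed one can arrange, by choosing $K$ and using the super-polynomial decay in (\ref{1.27})--(\ref{1.29}) for conditions i), ii) together with a good polynomial bound for iii), that $|\log\rho_1(L_0)|\gg\log N$; if only $\rho_1(L_0)\to 0$ polynomially one instead chooses $\rho(L)=\rho_1(L)^{1/2}$ and shrinks $\rho$ further so that the bracketed quantity still $\to-\infty$), the right-hand side tends to $-\infty$, which is (\ref{1.42}).

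The main obstacle is the decoupling step: the events ``$B_{0,z_i}$ is $(\alpha,\beta,\gamma)$-bad'' are not independent, since excursions of the interlacement visit several $U_0$-boxes, so one cannot simply take a product. The resolution is exactly the mechanism already used in \cite{Szni17} and in Section~2 of the present paper: either invoke the sprinkled decoupling/martingale inequalities for random interlacements, or—more in line with the soft local time technique advertised here—realise the excursions in the disjoint enlarged boxes $U_{0,z}$, $z\in\cF'$, through independent Poisson ingredients up to a coupling error that is super-polynomially small and hence harmless on the scale $e^{-cN^{d-2}}$ we are after. A secondary, purely combinatorial point is the extraction of the disjointified sub-family $\cF'$ with $|\cF'|\ge c(K)|\cF|$, which is routine since the ``conflict graph'' on $B_0$-boxes (edges between boxes with overlapping $U_0$'s) has degree bounded in terms of $K$ alone.
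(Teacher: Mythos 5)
Your proposal is correct and essentially matches the paper's argument: both rest on the super-polynomial decay of the single-box bad probability (the paper's $\eta(L_0)$, your $\rho_1(L_0)$), on stochastic domination by i.i.d.\ Bernoulli variables for collections of $\ov K L_0$-separated boxes (via soft local times, as in \cite{Szni17}), and on the choice $\rho(L) = \sqrt{\log L / |\log \eta(L)|}$. The paper organizes the combinatorics slightly differently---splitting the $B_0$-boxes into $\ov K^d$ well-separated sub-grids indexed by $\tau \in \{0,\dots,\ov K - 1\}^d$ and applying a binomial entropy estimate in each, rather than your union bound over $\lceil \rho(L_0) N^{d-2}\rceil$-element families followed by greedy extraction of a disjoint subfamily---but the two devices are equivalent and give the same rate.
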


\begin{proof}
The constant $c_2(\alpha, \beta, \gamma)$ corresponds to $c_8 (\alpha, \beta,\gamma)$ above (5.5) of \cite{Szni17}. We only sketch the proof, which is similar to that of Theorem 5.1 in the same reference, see also Proposition 3.1 of \cite{Szni19d}. It revolves around a stochastic domination argument: for finite collections of $L_0$-boxes with base points at mutual distance at least $\ov{K} L_0$, the indicator functions of the events that the boxes are $(\alpha, \beta,\gamma)$-bad are stochastically dominated by i.i.d. Bernoulli variables with success probability $\eta (L_0)$, for a function $\eta(L)$ depending on $\alpha, \beta, \gamma, K$ such that $\lim_L \frac{1}{\log L} \log \eta(L) = - \infty$. One sets $\rho(L) = \sqrt{\frac{\log L}{|\log \eta (L)|}}$, and considers for fixed $\tau \in \{0,\dots, \ov{K}-1\}^d$ the $L_0$-boxes $B_{0,z}$ with $z \in L_0 \tau + \ov{K} \IL_0$ that intersect $[-3N,3N]^d$. Setting $m = (\frac{8 N}{\ov{K} L_0})^d$ (an upper bound on the number of such boxes when $N$ is large) and $\wt{\rho} = \rho (L_0) N^{d-2} / (\ov{K}^d m)$, one has $\log \frac{\wt{\rho}}{\eta} \sim \log \frac{1}{\eta(L_0)}$, as $N \r \infty$, see for instance (3.16) of \cite{Szni19d}, so that (writing $\eta$ for $\eta(L_0)$ and  $\rho$ for $\rho(L_0)$):
\begin{equation*}
\begin{split}
m \Big\{\wt{\rho} \log \dis\frac{\wt{\rho}}{\eta} + ( 1- \wt{\rho}) \; \log \Big(\dis\frac{1 - \wt{\rho}}{1 - \eta}\Big) \Big\} & \sim m \wt{\rho} \log \mbox{\f $\dis\frac{1}{\eta}$} = \rho \ov{K}^{-d} N^{d-2} \log \mbox{\f $\dis\frac{1}{\eta}$}
\\
& = \sqrt{\log L_0 \log \mbox{\f $\dis\frac{1}{\eta}$}} \; \ov{K}^{-d} \,N^{d-2} \gg N^{d-2}, \; \mbox{as} \; N \r \infty.
\end{split}
\end{equation*}

\n
The claim (\ref{1.42}) then follows from the usual exponential bounds on sums of i.i.d.~Bernoulli variables. 
\end{proof}

\medskip
We now turn to the length scale $L_1$, see (\ref{1.8}). In addition to the boxes $B_{1,z}, z \in \IL_1$ in (\ref{1.10}), we consider the boxes (with the same $K$ as in (\ref{1.34}))
\begin{equation}\label{1.43}
U_{1,z} = z + [-K L_1 + 1, KL_1 -1)^d (\supseteq B_{1,z} = z + [0,L_1)^d) \; \mbox{for} \; z \in \IL_1 = L_1 \,\IZ^d.
\end{equation}
Similarly to above (\ref{1.36}), given a box $B_1$ and the corresponding $U_1$, we denote by $Z^{B_1}_\ell, \ell \ge 1$, the successive excursions in the interlacements that go from $B_1$ to $\partial U_1$, and also use the notation
\begin{equation}\label{1.44}
\begin{split}
N_v (B_1) = &\; \mbox{the number of excursions from $B_1$ to $\partial U_1$ in the interlacement} 
\\
& \; \mbox{trajectories with level at most $v$, for $v \ge 0$}.
\end{split}
\end{equation}
The quantity
\begin{equation}\label{1.45}
u_{B_1} = N_u (B_1) / {\rm cap}(B_1)
\end{equation}
plays the role of the local level (or the ``undertow'') of the interlacements (at the level $u$ chosen in (\ref{1.39})) in the box $B_1$.

\medskip
In essence, we will perform local averaging operations in $B_1$-boxes that will only retain the information contained in $u_{B_1}$, see (\ref{2.8}) and Theorem \ref{theo2.1}.

\medskip
We then proceed with the definition of the bubble set. First, given a box $B_1$, we denote by ${\rm Deep} \, B_1$ the set
\begin{equation}\label{1.46}
{\rm Deep} \, B_1 = \bigcup\limits_{z \in \IL_0, D_{0,z} \subseteq B_1} B_{0,z}
\end{equation}
obtained in essence by ``peeling off'' a shell of depth $3L_0$ from the surface of $B_1$, thus only keeping the $B_0$-boxes such that the corresponding $D_0$ is contained in $B_1$. One then defines the {\it bubble set}
\begin{equation}\label{1.47}
B u b = \bigcup\limits_{B_1 \subseteq D_N, \,\cU_1 \cap {\rm Deep} \, B_1 = \phi} B_1
\end{equation}
that is the union of the $B_1$-boxes contained in $D_N$ such that $\cU_1$ does not reach ${\rm Deep} \, B_1$, see Figure 2.

\medskip
We will perform local averaging in boxes $B_1$ outside the bubble set in the next section. But an important challenge will then be to ascribe a cost to a bubble set of non-negligible volume. The coarse grained random set $C_\omega$ constructed in Theorem \ref{theo3.1} will provide the required tool.

\medskip
As a last piece of notation, we write
\begin{equation}\label{1.48}
\mbox{$L^v_x, x \in \IZ^d$, for the {\it field of occupation times at level} $v \ge 0$}
\end{equation}
that records the total time spent at sites of $\IZ^d$ by trajectories with level at most $v$ in the interlacements. It will come up in Sections 2 and 4.

\section{Local averaging: departing from the microscopic picture}

The main object of this section is the proof of Theorem \ref{theo2.1}. It shows that we can replace the excess event $\cA_N = \{|D_N \backslash \cC^u_{2N} | \ge \nu \,|D_N|\}$ of (\ref{0.7}) with an event $\cA'_N$, in our quest for an upper bound on the exponential rate of decay of $\IP[\cA_N]$. This event $\cA'_N$ is solely expressed in terms of the volume of the bubble set and of the local levels $u_{B_1}$ of the $B_1$-boxes that lie in the complement of the bubble set in $D_N$.

\medskip
As in (\ref{0.1}), see also (\ref{0.6}), we assume that
\begin{align}
&0 < u < \ov{u}, \;\;\mbox{and}\label{2.1}
\\[1ex]
&\theta_0(u) < \nu < 1 \,. \label{2.2}
\end{align}
We also pick
\begin{align}
&\alpha > \beta > \gamma \;\;\mbox{in $(u,\ov{u})$, and}\label{2.3}
\\[1ex]
&K \ge 100 \,. \label{2.4}
\end{align}

\medskip\n
The parameters $\alpha, \beta, \gamma, u, K$ (and $N$) enter the definition of the random sets $\cU_1$ in (\ref{1.40}), which is a union of $B_0$-boxes, and of the bubble set $B u b$ in (\ref{1.47}), which is a union of $B_1$-boxes.

\medskip
We introduce an additional parameter $\ve$ in $(0,1)$ such that
\begin{equation}\label{2.5}
\nu > 10^3 \ve + \theta_0 (u).
\end{equation}

\n
We also choose a finite grid of values for the local levels $u_{B_1}$ (see (\ref{1.45})), namely, we consider a set $\Sigma^0 (\gamma, u, \ve)$ determined by $d,\gamma, u, \ve$ such that
\begin{equation}\label{2.6}
\left\{ \begin{array}{l}
\mbox{$\Sigma^0 \subseteq (0, \gamma]$ is finite, contains $u$ and $\gamma$, and is such that between consecutive points}
\\
\mbox{of $\{0\} \cup \Sigma^0$ the functions $\theta_0(\cdot)$ and $\sqrt{\cdot}$ vary at most by $10^{-3} \ve$}
\end{array}\right.
\end{equation}
(with $\theta_0(\cdot)$ as in (\ref{0.2})).

\medskip
In the course of the proof of Theorem \ref{theo2.1} below, we will add further points to this grid. For the time being we record some notation. We write
\begin{equation}\label{2.7}
\mbox{$\gamma_- \in \Sigma^0$ for the largest element of $\Sigma^0$ smaller than $\gamma$},
\end{equation}
and given an $L_1$-box $B_1$ with local level $u_{B_1}$, we write
\begin{equation}\label{2.8}
\left\{ \begin{split}
\lambda^-_{B_1}  = &\; \mbox{the largest element of $\{0\} \cup \Sigma^0$ smaller or equal to $u_{B_1}$},
\\
\lambda^+_{B_1}  = &\; \mbox{the smallest element of $\Sigma^0$ bigger than $u_{B_1}$, if $u_{B_1} < \gamma$, and $\gamma$ otherwise.}
\end{split}\right.
\end{equation}
Thus, when $u_{B_1} < \gamma$, we have $\lambda^-_{B_1} \le u_{B_1} < \lambda^+_{B_1}$ and $(\lambda^-_{B_1}, \lambda^+_{B_1}) \cap \Sigma^0 = \phi$. We will use $\lambda^-_{B_1}$ as a discretization of the local level $u_{B_1}$ of the box $B_1$. Further, we denote by $\cC_0$ and $\cC_1$ the respective subsets of $\IL_0$ and $\IL_1$, see (\ref{1.9}), (\ref{1.10}):
\begin{equation}\label{2.9}
\cC_0 = \{z \in \IL_0; B_{0,z} \subseteq D_N\}, \; \cC_1 = \{z \in \IL_1; B_{1,z} \subseteq D_N\},
\end{equation}
and routinely write $B_0 \in \cC_0$ to mean $B_{0,z}$ with $z \in \cC_0$ and $B_1 \in \cC_1$ to mean $B_{1,z}$ with $z \in \cC_1$.

\medskip
Here is the main object of this section. We recall (\ref{0.2}), (\ref{0.7}), (\ref{1.47}) for notation.

\begin{theorem}\label{theo2.1}
Given $u,\nu$ as in (\ref{2.1}), (\ref{2.2}), $\alpha >\beta > \gamma$ as in (\ref{2.3}), and $\ve$ as in (\ref{2.5}), there exists $c_3 (\alpha, \beta, \gamma, u, \ve)$ such that for $K \ge c_3$
\begin{equation}\label{2.10}
\limsup\limits_N \; \dis\frac{1}{N^{d-2}} \; \log \IP[\cA_N] \le \limsup\limits_N \; \dis\frac{1}{N^{d-2}} \; \log \IP [ \cA'_N],
\end{equation}
where $\cA'_N$ stands for the event
\begin{equation}\label{2.11}
\cA'_N = \{| B u b| + \dsl_{B_1 \subseteq D_N \backslash B u b} \wt{\theta} (\lambda^-_{B_1}) \,| B_1 | \ge (\nu - 6 \ve) \,|D_N|\},
\end{equation}
with $\wt{\theta}(v) = \theta_0(v) \, 1\{v < \gamma^-\} + 1\{v \ge \gamma^-\}$, for $v \ge 0$.
\end{theorem}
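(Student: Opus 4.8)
The idea is to start from the disconnection event $\cA_N$ and show that, outside an event of negligible probability (in the $N^{d-2}$-exponential scale), the occurrence of $\cA_N$ forces the event $\cA'_N$ to hold. Concretely, I would decompose $D_N$ into the bubble set $Bub$ and the $B_1$-boxes contained in $D_N\setminus Bub$, and estimate the number of disconnected points in each region separately.

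\textbf{Step 1: Points inside $Bub$ are cheap.} Every point of $D_N\setminus\cC^u_{2N}$ in a $B_1$-box met deeply by $\cU_1$ can, by the profusion-of-highways property (\ref{1.41}) together with (\ref{1.40}) and the remark below (\ref{1.41}), be connected in $\cV^u$ to $([-3N+L_0,3N-L_0]^d)^c$, hence through $S_{2N}$, provided it lies in a connected set of diameter at least $\tfrac{L_0}{10}$ inside an $(\alpha,\beta,\gamma)$-good box whose $D_0$ carries few excursions. Thus, up to correction terms coming from $(\alpha,\beta,\gamma)$-bad boxes (controlled by Lemma \ref{lem1.4}, so negligible on $\cB_N^c$) and from the thin shells peeled off in passing from $B_1$ to $\mathrm{Deep}\,B_1$, the only points of $D_N$ that can be disconnected from $S_{2N}$ are those lying in $Bub$ or in the ``small-cluster remainder'' of the good boxes outside $Bub$. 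Counting: $|D_N\setminus\cC^u_{2N}|\le |Bub| + \sum_{B_1\subseteq D_N\setminus Bub}(\text{disconnected points in }B_1) + (\text{negligible error})$.

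\textbf{Step 2: Local averaging in the good $B_1$-boxes.} For a $B_1$-box outside $Bub$, the number of points disconnected \emph{from $S_N$} (a fortiori from $S_{2N}$, but here one wants a bound going the other way) should be comparable, up to an $\ve|B_1|$ error for ``most'' boxes, to $\theta_0(u_{B_1})|B_1|$ if $u_{B_1}<\gamma_-$, and trivially at most $|B_1|$ otherwise; this is the content of replacing the true count by $\wt\theta(\lambda^-_{B_1})|B_1|$, using that $\theta_0$ varies by at most $10^{-3}\ve$ between consecutive grid points (\ref{2.6}), so $\theta_0(u_{B_1})\le \theta_0(\lambda^-_{B_1})+10^{-3}\ve$. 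The key probabilistic input is the soft local time / undertow–wavelet decomposition of \cite{PopoTeix15,ComeGallPopoVach13}: in well-separated $B_1$-boxes one extracts the local level $u_{B_1}$ (the undertow) and a conditionally independent wavelet part, and a union bound over the boxes shows that except on an event of probability $\exp(o(N^{d-2}))$, the fraction of $B_1$-boxes where the empirical disconnection count deviates from $\theta_0(u_{B_1})|B_1|$ by more than $\ve|B_1|$ is at most $\ve\,(N/L_1)^d$. Summing the contributions of the good boxes and the $\le \ve|D_N|$ exceptional volume, and absorbing the $3L_0$-shell and bad-box errors, gives $|D_N\setminus\cC^u_{2N}| \le |Bub| + \sum_{B_1\subseteq D_N\setminus Bub}\wt\theta(\lambda^-_{B_1})|B_1| + 6\ve|D_N|$ on the good event, whence $\cA_N\cap\cB_N^c\cap(\text{good event})\subseteq\cA'_N$.

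\textbf{Step 3: Assemble.} Write $\IP[\cA_N]\le \IP[\cA'_N] + \IP[\cB_N] + \IP[\text{wavelet-deviation event}]$, take $\tfrac{1}{N^{d-2}}\log$, and send $N\to\infty$: the last two terms vanish (Lemma \ref{lem1.4}, and the soft-local-time union bound), yielding (\ref{2.10}). The choice of $c_3(\alpha,\beta,\gamma,u,\ve)$ is dictated by the requirements that $K$ be large enough for Lemma \ref{lem1.4} (so $K\ge c_2$), for the $(\alpha,\beta,\gamma)$-good-box connectivity to function, and for the soft-local-time comparison in the $B_1$-boxes $U_1$ (of scale $KL_1$) to give the required concentration; one takes $c_3$ to be the maximum of these thresholds.

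\textbf{Main obstacle.} The delicate point is Step 2: establishing, via the soft local time technique, that the disconnection count in a typical good $B_1$-box concentrates around $\theta_0(u_{B_1})|B_1|$ with the right uniformity over all $B_1$-boxes simultaneously — in particular handling the dependence between overlapping $U_1$-boxes and controlling the boundary/shell effects so that the $6\ve|D_N|$ slack genuinely covers all accumulated errors (bad boxes, peeled shells, wavelet deviations, and the $\ve$-grid rounding in $\lambda^-_{B_1}$). The separation scale $KL_1$ and the fact that $(N/L_1)^d$ is only $N^{d-2}$ up to a logarithmic factor are exactly what make the union bound affordable, but the bookkeeping of the error terms against the fixed budget $6\ve$ is where the real care is needed.
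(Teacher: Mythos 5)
Your high-level plan — decompose into bubble/non-bubble, use the highway structure (\ref{1.40}), (\ref{1.41}) to connect points outside $B u b$ to $S_{2N}$, and then replace the count in the good $B_1$-boxes by a local average via soft local times and Lemma~\ref{lem1.4} for the bad boxes — is indeed the shape of the paper's proof, and (\ref{2.17}), (\ref{2.18}) is exactly such a decomposition. But there is a genuine conceptual gap in the middle that your proposal treats as bookkeeping: soft local times give concentration only for a \emph{local} observable, namely the number of $x\in B_1$ disconnected from $S(x,R)$ in $B_1$ minus the range of the first $\lambda\,{\rm cap}(B_1)$ excursions, not for the global count of points in $D_N\setminus\cC^u_{2N}$. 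Nowhere do you show that, for good $B_1$-boxes outside $B u b$ with $u_{B_1}<\gamma_-$, the global event ``$x\va\, S_{2N}$'' is (up to $o(\ve)$-volume) subsumed by this local event. The paper makes this step explicit as the splitting $I+I\!I_R+I\!I\!I_R$ and then devotes the hardest part of the argument (the $I\!I\!I_R$-good-box conditions (\ref{2.27})--(\ref{2.30}), the identity $\wh{I\!I\!I}_{1,R}=0$ in (\ref{2.36}), and Lemmas~\ref{lem2.2} and \ref{lem2.3}) to showing $I\!I\!I_R$ is negligible. This requires a nontrivial compatibility of excursion counts at the scales $L_1$ and $L_0$ ((\ref{2.28}), (\ref{2.29})) together with a gluing property (\ref{2.30}), which are exactly what let you attach a point that ``makes it to $S(x,R)$'' to a $\frac{L_0}{10}$-diameter cluster in a good $B_0$ with few excursions, hence to $\cU_1$, hence to $S_{2N}$. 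Your Step 1 invokes (\ref{1.41}) for this linkage, but (\ref{1.41}) as stated applies to connected sets of diameter at least $\frac{L_0}{10}$ living in good $B_0$-boxes, not directly to arbitrary points $x$ that merely escape to distance $R\ll L_0$ avoiding the first excursions at scale $L_1$; bridging that is the content of $I\!I\!I_R$ and it cannot be waved away.

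A second, smaller omission: you compare the disconnection count in a good $B_1$-box directly to $\theta_0(u_{B_1})|B_1|$ and then to $\theta_0(\lambda^-_{B_1})|B_1|$. The quantity that soft local times actually produce is $\theta_{0,R}\big((1+\kappa)\lambda\big)+\mu$ for a local scale $R$ (cf.~(\ref{2.24})), so one needs an explicit $R$ chosen via Dini's lemma as in (\ref{2.13}), a comparison of the increments of $\theta_{0,R}$ against those of $\theta_0$ as in (\ref{2.21})--(\ref{2.22}), and the sprinkled grid $\Sigma\supseteq\Sigma^0$ to absorb the $(1+\kappa)$-dilation. Without introducing $R$ and $\theta_{0,R}$, the claim that the local count ``should be comparable to $\theta_0(u_{B_1})|B_1|$'' is not something the soft-local-time coupling gives you.

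So: right scaffolding and right tools, but the central reduction from global disconnection to a soft-local-time-controllable quantity (the $I\!I\!I_R$ analysis) is missing from the argument, and the local-averaging step is stated in terms of $\theta_0$ rather than the locally defined $\theta_{0,R}$ that the coupling actually controls.
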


One should note that in contrast to the original excess event $\cA_N$ of (\ref{0.7}) that involves the microscopic information stating for each $x$ in $D_N$ whether $x$ can be linked by a path in $\cV^u$ to $S_{2N}$ or not, the event $\cA'_N$ is expressed in terms of the volume of the bubble set (which relies on $\cU_1$) and the discretizations $\lambda^-_{B_1}$ of the local levels $u_{B_1}$ for $B_1 \in \cC_1$ outside the bubble set. In the proof of Theorem \ref{theo2.1} the heart of the matter will correspond to the treatment of the boxes $B_1$ outside the bubble set and such that $u_{B_1} < \gamma_-$.

\medskip\n
{\it Proof of Theorem \ref{theo2.1}:}
Recall that $\cA_N$ stands for the event $\{| D_N \backslash \cC^u_{2N}| \ge \nu |D_N|\}$, see (\ref{0.7}). On the other hand, the left member of the inequality in the definition of $\cA'_N$ is
\begin{equation}\label{2.12}
|B u b| + \dsl_{B_1 \subseteq D_N \backslash B u b} \wt{\theta} (\lambda^-_{B_1}) |B_1| = \dsl_{u_{B_1} \ge \gamma_- \;{\rm or} \; B_1 \subseteq B u b} |B_1| + \dsl_{u_{B_1} < \gamma_- \;{\rm and} \; B_1 \cap B u b = \phi} \theta_0(\lambda^-_{B_1}) |B_1|
\end{equation}

\n
where the sum runs over $B_1$ in $\cC_1$, see (\ref{2.9}), in the last two sums.

\medskip
The first step of the proof of Theorem \ref{theo2.1} will in essence bound $|D_N \backslash \cC^u_{2N}|$ from above by three terms, see (\ref{2.17}) - (\ref{2.18}) below. The first and the second term will eventually lead, up to corrections, to the first and second sum in the right member of (\ref{2.12}), and the third term will be negligible.

\medskip
As a preparation, we first introduce an additional length scale. The function $\theta_0(\cdot)$ from (\ref{0.2}) is continuous on $[0, \gamma]$, and as $L$ tends to infinity the continuous functions \hbox{$\theta_{0,L}(v) = \IP [0 \vv \;\; S(0,L)]$,} $v \in [0,\gamma]$ (with $S(0,L)$ as in (\ref{1.32})), are non-decreasing in $L$, and converge uniformly to $\theta_0(\cdot)$ on $[0,\gamma]$ (by Dini's lemma). We can thus find $R(\gamma, u, \ve)$ such that
\begin{equation}\label{2.13}
| \Sigma^0 |^4 \sup\limits_{v \in [0,\gamma]} \IP[ 0 \stackrel{v}{\longleftrightarrow} S(0,R) \; \mbox{and} \; 0 \vv \;\; \infty] \le 10^{-5} \, \ve^2 .
\end{equation}

\n
We also need to refine the finite grid $\Sigma^0 (\subseteq (0,\gamma])$: between any two consecutive points of $\Sigma^0$ we introduce $8$ equally spaced points. We denote by $\Sigma$ the enlarged grid. It is determined by $\Sigma^0$ and hence by $d, \gamma, u, \ve$. In addition, we have $|\Sigma | \le 9 \, |\Sigma^0|$. We will also routinely use the following notation: when $B_1$ is such that $u_{B_1} < \gamma_-$, so that $\lambda^-_{B_1} \le u_{B_1} < \lambda^+_{B_1} \le \gamma_-$, see (\ref{2.8}), we will write
\begin{equation}\label{2.14}
(\lambda^+_{B_1}) < \overset{\vee}{\lambda}\,\!{^+_{B_1}} < \overset{\vee}{\lambda}\,\!{^{++}_{B_1}}  < \lambda^{++}_{B_1} < \wh{\lambda}^{++}_{B_1} < \wt{\lambda}^{++}_{B_1} < \wt{\lambda}^{+++}_{B_1} <  \wh{\lambda}_{B_1}^{+++} < \lambda^{+++}_{B_1} \; (< \gamma)
\end{equation}
for the $8$ inserted values right above $\lambda^+_{B_1}$.

\medskip
Sometimes we will also consider some generic element of $\Sigma^0 \cap (0, \gamma_-]$, denoted by $\lambda^+$ and write $\overset{\vee}{\lambda}\,\!{^+} < \overset{\vee}{\lambda}\,^{++} < \lambda^{++}  < \wh{\lambda}^{++}< \wt{\lambda}^{++} < \wt{\lambda}^{+++} < \wh{\lambda}^{+++} < \lambda^{+++}$ for the $8$ inserted points of $\Sigma$ right above $\lambda^+$. These inserted values will be used to perform several sprinkling operations, and to define the basic splitting in (\ref{2.17}), (\ref{2.18}) below.

\medskip
One last preliminary remark is that setting for each $L_1$-box $B_1$
\begin{equation}\label{2.15}
B_{1,R} = \{x \in B_1; B(x, R) \subseteq B_1\},
\end{equation}
for large $N$ the boxes $B_{1,R}$, with $B_1 \in \cC_1$ carry the ``principal volume'' of $D_N$ in the sense that
\begin{equation}\label{2.16}
D_N \supseteq \bigcup\limits_{\cC_1} B_1 \supseteq \bigcup\limits_{\cC_1} B_{1,R} \; \mbox{and} \; \big| \bigcup\limits_{\cC_1} B_{1,R} \big| \, / \, |D_N| \underset{N}{\longrightarrow} 1.
\end{equation}
Our first main step corresponds to the splitting stated below. By (\ref{2.16}) and the definition $\cA_N = \{|D_N \backslash \cC^u_{2N}| \ge \nu \,|D_N|\}$ of the excess event, we see that
\begin{equation}\label{2.17}
\mbox{for large $N$ on $\cA_N$, $(\nu-\ve) \,|D_N| \le \dsl_{B_1 \in \cC_1} \; \dsl_{x \in B_{1, R}} 1\{x \notin \cC^u_{2N}\} \le I + I\!I_R + I\!I\!I_R$}
\end{equation}

\n
where we have set (with a similar convention as below (\ref{2.12})):
\begin{equation}\label{2.18}
\left\{ \begin{array}{rl}
{\rm i)} & I = \dsl_{u_{B_1} \ge \gamma_- \;{\rm or} \; B_1 \subseteq B u b} |B_1|,
\\[4ex]
{\rm ii)} & I\!I_R = \dsl_{u_{B_1} < \gamma_- \;{\rm and} \; B_1 \cap B u b = \phi}\; \dsl_{x \in B_{1,R}} 1 \{x \lr \;\;\; S(x,R): B_1, \lambda^{+++}_{B_1}\}
\\
& \mbox{where the indicator function refers to the event stating that $x$ is not}
\\
& \mbox{connected to $S(x,R)$ in $B_1 \backslash ({\rm range} \,Z_1^{B_1} \cup \dots \cup {\rm range} \, Z^{B_1}_{\lambda^{+++}_{B_1} \,{\rm cap}(B_1)})$}
\\[3ex]
{\rm iii)} & I\!I\!I_R  = \dsl_{u_{B_1}< \gamma_- \;{\rm and } \; B_1 \cap  B u b = \phi} \;\dsl_{x \in B_{1,R}} 1 \{x \longleftrightarrow  S(x,R): B_1, \lambda_{B_1}^{+++} \; \mbox{and} \; x \uu \;\;S_{2N}\} 
\\
&\mbox{with a similar notation as in ii)}
\end{array}\right.
\end{equation}
(we recall that for $x \in B_{1,R} \subseteq D_N$, $x \notin \cC^u_{2N}$ coincides with $x \uu \;\; S_{2N})$.

\medskip
The term $I$ in i) above coincides with the first sum in the right member of (\ref{2.12}). To prove Theorem \ref{theo2.1} we will show that
\begin{align}
& \mbox{for} \; K \ge c (\gamma, u, \ve), \label{2.19}
\\
&\lim\limits_N \; \dis\frac{1}{N^{d-2}} \; \log \,\IP[ I\!I_R > \dsl_{u_{B_1} < \gamma_- \;{\rm and} \; B_1 \cap B u b = \phi} \theta_0(\lambda^-_{B_1}) \,|B_1| + 2 \ve \,|D_N|] = - \infty, \nonumber
\intertext{and that}
& \mbox{for} \; K \ge c (\alpha, \beta, \gamma, u, \ve), \;\lim\limits_N \; \dis\frac{1}{N^{d-2}} \; \log \,\IP[I\!I\!I_R \ge 3 \ve \, |D_N|] = - \infty. \label{2.20}
\end{align}

\n
In view of (\ref{2.12}) and (\ref{2.17}), the claim (\ref{2.10}) of Theorem \ref{theo2.1} will then follow. In essence, the second sum in the right member of (\ref{2.12}) mainly dominates $I\!I_R$ and $I\!I\!I_R$ is a negligible quantity.

\medskip
The proof of (\ref{2.19}) will mostly be an application of the results of Section 3 of \cite{Szni19d} to the example (2.6) of that same reference, combined with a comparison between $\theta_{0,R}$, see above (\ref{2.13}), and $\theta_0$ to control the variation of $\theta_{0,R}$.

\medskip
The proof of (\ref{2.20}) will be more delicate and will revolve around the intuitive idea that for most $B_1$-boxes in $\cC_1$, if their local level $u_{B_1}$ lies below $\gamma_-$ and ${\rm Deep} \,B_1$ meets $\cU_1$ (i.e. $B_1$ is not in the bubble set, see (\ref{1.47})), then most points of $B_1$ that make it to distance $R$ avoiding the first $\lambda_{B_1}^{+++} {\rm cap}(B_1)$ excursions $Z^{B_1}_\ell$, $\ell \ge 1$, are actually connected to $S_{2N}$ in $\cV^u$.

\medskip
Thus, there remains to prove (\ref{2.19}) and (\ref{2.20}). We begin by the proof of (\ref{2.19}). We first compare the variation of $\theta_{0,R}$ between two points of $[0,\gamma]$ with that of $\theta_0$. By (43) of \cite{Szni19c}, one knows that for $0 \le v < v'$
\begin{align}
&\theta_0(v') - \theta_0(v) = \theta_{0,R}(v') - \theta_{0,R}(v) + \IP [ 0  \stackrel{v'}{\longleftrightarrow} S(0,R), 0\vvv \;\; \infty] - \IP[0 \stackrel{v}{\longleftrightarrow} S(0,R), 0 \vv \;\; \infty] \label{2.21}
\intertext{and by (\ref{2.13}) (and $|\Sigma^0 | \ge 2$) we have}
&\big|\theta_0(v') - \theta_0(v)  - \big(\theta_{0,R} (v') - \theta_{0,R}(v)\big)\big| \le 10^{-5} \ve, \; \mbox{for any $v < v'$ in $[0, \gamma]$}. \label{2.22}
\end{align}

\n
We can now apply the results of Section 3 of \cite{Szni19d} where we choose the local function $F$ as in (2.6) of \cite{Szni19d} (i.e.~for any $\ell \in [0,\infty)^{B(0,R)}$, $F(\ell) = 1 \{$any path from $0$ to $S(0,R)$ in $B(0,R)$ meets a $y$ with $\ell_y > 0\}$ and the corresponding $\theta(v) \stackrel{\rm def}{=} \IE [F((L^v_y)_{|y|_\infty \le R})] = \theta_{0,R}(v)$, see (\ref{1.48}) for notation). We select $\kappa(\gamma, u, \ve)$ and $\mu(\ve)$ so that with $\Sigma$ the refinement of $\Sigma^0$ introduced below (\ref{2.13})
\begin{equation}\label{2.23}
(1 + \kappa) \, \lambda < (1 - \kappa) \,\lambda', \; \mbox{for all $\lambda < \lambda '$ in $\Sigma$, with $0 < \kappa < \ve$, and $\mu = 10^{-3} \ve$}.
\end{equation}
The so-called $(\Sigma, \kappa, \mu)${\it -good boxes} of (2.76) of \cite{Szni19d} allow to perform local-averaging. In particular, when $B_1$ is a $(\Sigma, \kappa, \mu)$-good box, then for any $\lambda \in \Sigma$, one has
\begin{equation*}
\dsl_{x \in B_{1,R}}  1\{x \lr \;\;S(x,R): B_1, \lambda\} \le \big(\theta_{0,R} \big((1 + \kappa) \lambda\big) + \mu\big) \,|B_1|
\end{equation*}
where the indicator function in the left member refers to the event stating that $x$ is not connected to $S(x,R)$ in $B_1 \backslash ({\rm range} \,Z^{B_1}_1 \cup \dots \cup {\rm range} \,Z^{B_1}_{\lambda \,{\rm cap} (B_1)})$. Thus, when $B_1$ is a $(\Sigma, \kappa, \mu)$-good box, for any consecutive $\lambda^- < \lambda^+ < \lambda'$ in $\{0\} \cup \Sigma^0$, one has (see below (\ref{2.14}) for notation)
\begin{equation}\label{2.24}
\begin{split}
\dsl_{x \in B_{1,R}} 1\{x \lr \;\; S(x,R): B_1, \lambda^{+++}\}    \le &\; \big(\theta_{0,R} \big((1 + \kappa) \lambda^{+++}\big) + \mu\big) \,|B_1| \;
\\[-1ex]
\mbox{(and since $(1 + \kappa) \lambda^{+++} < \lambda')$}\qquad &\hspace{-3.5ex} \stackrel{(\ref{2.23})}{\le}  \big(\theta_{0,R}(\lambda') + 10^{-3} \ve) \,|B_1| 
\\
 = &\; \big(\theta_{0,R}(\lambda^-) + \theta_{0,R} (\lambda') - \theta_{0,R}(\lambda^-) + 10^{-3}\ve) |B_1|
 \\
 &\hspace{-3.5ex} \stackrel{(\ref{2.22})}{\le}  \big(\theta_{0,R}(\lambda^-) + \theta_0(\lambda') - \theta_0(\lambda^-) + 2\,10^{-3}\ve ) |B_1|
 \\
 &\hspace{-4ex} \underset{\theta_{0,R} \le \theta_0}{\stackrel{(\ref{2.6})}{\le}}  \big(\theta_0 (\lambda^-) +4\, 10^{-3} \ve) |B_1|.
\end{split}
\end{equation}

\medskip\n
On the other hand, by Proposition 3.1 and (4.9) of \cite{Szni19d}, when $K$ is large enough, most $B_1$-boxes in $D_N$ are $(\Sigma, \kappa, \mu)$-good, in the sense that
\begin{equation}\label{2.25}
\mbox{for $K \ge c(\gamma, u, \ve), \;\dis\frac{1}{N^{d-2}} \; \log  \IP \big[ \dsl_{B_1 \in \cC_1} |B_1| \,1\{B_1$ is $(\Sigma, \kappa, \mu)$-bad$\} \ge \ve \,|D_N|\big] = - \infty$}.
\end{equation}
Hence, on the complement of the event under the above probability, using (\ref{2.24}) for $(\Sigma, \kappa, \mu)$-good boxes, we find that
\begin{equation}\label{2.26}
I\!I_R \underset{(\ref{2.24})}{\stackrel{(\ref{2.18}) \,{\rm ii)}}{\le}} \; \dsl_{u_{B_1} < \gamma_- \; {\rm and} \; B_1 \cap B u b = \phi} \theta_0(\lambda^-_{B_1}) + 2 \ve \, |D_N|,
\end{equation}
and this completes the proof of (\ref{2.19}).

\medskip
We now turn to the proof of (\ref{2.20}). The implementation of the rough strategy outlined below (\ref{2.20}) to prove that statement relies on the notion of $I\!I\!I_R$-good boxes that corresponds to four ``local properties'' satisfied by such boxes, see (\ref{2.27}) - (\ref{2.30}) below. We will show that when $K$ is sufficiently large, with overwhelming probability for large $N$, the $I\!I\!I_R$-bad boxes in $D_N$ occupy a negligible fraction of volume, see Lemma \ref{lem2.3}, and that the contribution of $I\!I\!I_R$-good $B_1$-boxes in $D_N$ to $I\!I\!I_R$ is small, see Lemma \ref{lem2.2} and (\ref{2.35}), (\ref{2.36}).

\medskip
Given a box $B_1$ here are the four conditions:
\begin{equation} \label{2.27}
\mbox{all $B_0 \subseteq {\rm Deep} \, B_1$ are $(\alpha, \beta, \gamma)$-good (see (\ref{1.46}), (\ref{1.38}) for notation),}
\end{equation}
\begin{equation}\label{2.28}
\left\{\begin{array}{l}
\mbox{for all $B_0 \subseteq {\rm Deep} \,B_1$ and $\lambda < \lambda'$ in $\Sigma$, the excursions $Z^{B_1}_\ell$, $1 \le \ell \le \lambda  \,{\rm cap} (B_1)$ }
\\
\mbox{contain in total strictly less than $\lambda' \,{\rm cap}(D_0)$ excursions from $D_0$ to $\partial U_0$},
\end{array}\right.
\end{equation}
\begin{equation}\label{2.29}
 \left\{\begin{array}{l}
\mbox{for all $B_0 \subseteq {\rm Deep} \,B_1$ and $\lambda < \lambda'$ in $\Sigma$, the excursions $Z^{B_1}_\ell$, $1 \le \ell \le \lambda '\,{\rm cap} (B_1)$} 
\\
\mbox{contain in total more than $\lambda \,{\rm cap}(D_0)$ excursions from $D_0$ to $\partial U_0$,  \qquad}
\end{array}\right.
\end{equation}
and with the notation (\ref{1.35}) and below (\ref{2.14})
\begin{equation}\label{2.30}
 \left\{\begin{array}{l}
 \mbox{for all $B_0 \subseteq {\rm Deep} \, B_1$ and all $\lambda^{++}$  of the grid $\Sigma$, if two connected sets in } 
\\
\mbox{$\wt{B}_0 \backslash ({\rm range} \,Z_1^{B_1} \cup \dots \cup {\rm range} \, Z^{B_1}_{\lambda^{++} \,{\rm cap}(B_1)})$ have diameter at least $\frac{L}{10}$,}
\\
\mbox{then they are connected in $D_0 \backslash ({\rm range} \, Z_1^{B_1} \cup \dots \cup {\rm range} \, Z^{B_1}_{\lambda^+ \,{\rm cap} (B_1)})$. }
\end{array}\right.
\end{equation}
We then say that
\begin{equation}\label{2.31}
\mbox{a box $B_1$ is $I\!I\!I_R$\,{\it -good} if (\ref{2.27})\!\,\,-\,\,\!(\ref{2.30}) are satisfied, and $I\!I\!I_R${\it -bad} otherwise.}
\end{equation}
With the help of the above notion we can bound $I\!I\!I_R$ in (\ref{2.18}) iii) as follows:
\begin{equation}\label{2.32}
\left\{\begin{array}{l}
I\!I\!I_R \le I\!I\!I_{1,R} + I\!I\!I_{2,R}, \; \mbox{where}
\\[2ex]
I\!I\!I_{1,R} =  \dsl_{B_1: I\!I\!I_R{\rm -good},u_{B_1} < \gamma_- \,{\rm and}\, B_1 \cap B u b = \phi} \;\; 
\\
\qquad \quad \dsl_{x \in B_{1,R}} 1 \{x \longleftrightarrow S(x,R): B_1, \lambda^{+++}_{B_1} \; {\rm and} \; x \uu \;\;S_{2N}\}
\\[4ex]
I\!I\!I_{2,R} = \dsl_{B_1 \in \cC_1} |B_1| \, \mbox{$1\{B_1$ is $I\!I\!I_R$-bad$\}$}.
\end{array}\right.
\end{equation}

\n
The last term will be handled in Lemma \ref{lem2.3} below. For the time being we focus on $I\!I\!I_{1,R}$. Our next goal is to show that
\begin{equation}\label{2.33}
\mbox{for} \; K \ge c(\gamma, u, \ve), \; \lim\limits_N \; \dis\frac{1}{N^{d-2}} \; \log \IP[ I\!I\!I_{1,R} \ge 2 \ve \,|D_N|] = - \infty.
\end{equation}

\n
With this goal in mind, we observe that when $B_1$ is $I\!I\!I_R$-good and $u_{B_1} < \gamma_-$ (such boxes enter the sum defining $I\!I\!I_{1,R}$ in (\ref{2.32})), then by (\ref{2.28}) for each $B_0 \subseteq {\rm Deep} \,B_1$, one has $N_u(D_0) < \beta \,{\rm cap} (D_0)$ (because the excursions at level at most $u$ from $D_0$ to $\partial U_0$ are part of the $N_u(B_1) = u_{B_1} \,{\rm cap}(B_1)$ first excursions at level at most $u$ from $B_1$ to $\partial U_1$). Moreover, by (\ref{2.27}) all $B_0 \subseteq {\rm Deep} \,B_1$ are $(\alpha, \beta, \gamma)$-good. Thus, in view of the definition of $\cU_1$ in (\ref{1.40}), we see that
\begin{equation}\label{2.34}
\mbox{when $B_1 \in \cC_1$ is $I\!I\!I_R$-good, $u_{B_1} < \gamma_-$, and $\cU_1 \cap {\rm Deep} \, B_1 \neq \phi$, then $\cU_1 \supseteq {\rm Deep}\, B_1$}.
\end{equation}
Since $|B_1 \backslash {\rm Deep} \,B_1| \le c \,L_0 \,L_1^{d-1}$, we see  that
\begin{equation}\label{2.35}
\left\{\begin{split}
I\!I\!I_{1,R} & \le c(L_0/L_1) \, |D_N| +  \wt{I\!I\!I}_{1,R} + \wh{I\!I\!I}_{1,R}, \; \mbox{where}
\\[2ex]
\wt{I\!I\!I}_{1,R} & =  \dsl_{B_1: I\!I\!I_R{\rm -good},u_{B_1} < \gamma_- ,B_1 \cap B u b = \phi} \;\; \dsl_{B_0 \subseteq {\rm Deep}\,B_1} 
\\
&\quad \dsl_{x \in B_0} \;1 \{x \longleftrightarrow S(x,R): B_1, \lambda^{+++}_{B_1} \; {\rm and}  \;x \lr \;\;\;S\big(x, \big[\mbox{\f $\dis\frac{L_0}{2}$}\big]\big): B_1, \lambda^{++}_{B_1}\},
\\[2ex]
\wh{I\!I\!I}_{1,R} & = \dsl_{B_1:  I\!I\!I_R{\rm -good},u_{B_1} < \gamma_- ,B_1 \cap B u b = \phi} \;\; \dsl_{B_0 \subseteq {\rm Deep}\,B_1} \;\;
\\[1ex]
&\quad \dsl_{x \in B_0} \;1 \big\{x \longleftrightarrow S\big(x, \big[\mbox{\f $\dis\frac{L_0}{2}$}\big]\big): B_1, \lambda^{++}_{B_1} \; \mbox{and}\;\;x \uu \;\;S_{2N}\big\}.
\end{split}\right.
\end{equation}

\medskip\n
Our next step towards the proof of (\ref{2.33}) is to show that
\begin{equation}\label{2.36}
\wh{I\!I\!I}_{1,R} = 0.
\end{equation}

\n
For this purpose, we observe that by (\ref{2.34}), for any $B_1$ entering the sum defining $\wh{I\!I\!I}_{1,R}$, all $B_0 \subseteq {\rm Deep} \, B_1$ are contained in $\cU_1$, so that by (\ref{1.38}) i) and (\ref{1.40}), (\ref{1.41}) there is a component (and actually all such components) in $B_0 \backslash ({\rm range} \, Z_1^{D_0} \cup \dots \cup {\rm range} \, Z^{D_0}_{\alpha \,{\rm cap} (D_0)}$) with diameter at least $\frac{L_0}{10}$, which is linked to $S_{2N}$ in $\cV^u$.

\medskip
Due to (\ref{2.28}) and $u_{B_1} < \gamma_-$, such a component in $B_0 \backslash ({\rm range} \,Z_1^{D_0}  \cup \dots \cup {\rm range}\, Z^{D_0}_{\alpha \,{\rm cap}(D_0)})$ is a connected set in $\wt{B}_0 \backslash ({\rm range} \, Z_1^{B_1} \cup \dots \cup {\rm range} \, Z^{B_1}_{\lambda^{++}_{B_1} \,{\rm cap} (B_1)})$. By (\ref{2.30}) any $x \in B_0$ that is linked to $S(x, [\frac{L_0}{2}])$ in $B_1 \backslash ({\rm range} \, Z_1^{B_1} \cup \dots \cup {\rm range} \, Z^{B_1}_{\lambda^{++}_{B_1} \,{\rm cap} (B_1)})$ (and hence in $\wt{B}_0 \backslash  ({\rm range} \, Z_1^{B_1} \cup \dots \cup {\rm range} \, Z^{B_1}_{\lambda^{++}_{B_1} \,{\rm cap} (B_1)})$ can be linked to the above mentioned connected set in $B_0 \backslash ({\rm range} \, Z_1^{D_0} \cup \dots \cup {\rm range} \, Z^{D_0}_{\alpha \,{\rm cap} (D_0)})$ via a path in $\wt{B}_0 \backslash ({\rm range} \, Z_1^{B_1} \cup \dots \cup {\rm range} \, Z^{B_1}_{\lambda^+_{B_1} \,{\rm cap} (B_1)})$. Since $u_{B_1} < \lambda^+_{B_1}$, the above connected set in $B_0 \backslash ({\rm range} \, Z_1^{D_0} \cup \dots \cup {\rm range} \, Z^{D_0}_{\alpha \,{\rm cap} (D_0)})$ (as we already know) and the path from $x$ to this connected set are contained in $\cV^u$, so that $x \in \cC^u_{2N}$. This proves (\ref{2.36}).

\bigskip
To complete the proof of (\ref{2.33}) there remains to bound $\wt{I\!I\!I}_{1,R}$ in (\ref{2.35}). This is the objective of the next
\begin{lemma}\label{lem2.2}
\begin{equation}\label{2.37}
\mbox{For $K \ge c(\gamma, u, \ve)$, one has $\lim\limits_N \; \dis\frac{1}{N^{d-2}} \log \IP [\wt{I\!I\!I}_{1,R} \ge \mbox{\f $\dis\frac{7}{4}$} \; \ve \,|D_N|] = - \infty$}.
\end{equation}
\end{lemma}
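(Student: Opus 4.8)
\medskip\noindent\emph{Sketch of the plan.}
The guiding idea is to bring the contribution of each box $B_0$ to $\wt{I\!I\!I}_{1,R}$ down to the scale $L_0$ --- replacing the condition involving the global excursions $Z^{B_1}_\ell$ by one involving only the excursions $Z^{D_0}_\ell$ into the enlarged box $U_0$ --- and then to close the estimate by a Cram\'er-type bound over a well-spaced family of $B_0$-boxes. This passage to the $L_0$-scale is the crux: the number of $B_1$-boxes in $D_N$ is of order $N^{d-2}/\log N$, which is \emph{not} $\gg N^{d-2}$, whereas the number of $B_0$-boxes is of order $(N/L_0)^d$, comparable to $N^{d(d-2)/(d-1)}$, hence $\gg N^{d-2}$; only this second scale delivers the super-exponential (in $N^{d-2}$) decay demanded by (\ref{2.37}).

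I would first fix $\lambda^+\in\Sigma^0\cap(0,\gamma_-]$ and deal with the part of $\wt{I\!I\!I}_{1,R}$ coming from boxes $B_1$ with $\lambda^+_{B_1}=\lambda^+$, summing over the $|\Sigma^0|$ choices at the end; then $\lambda^{++}_{B_1}$, $\lambda^{+++}_{B_1}$ and the eight levels inserted in (\ref{2.14}) are prescribed elements of $\Sigma$. Since $\lambda^{++}_{B_1}<\lambda^{+++}_{B_1}$, the event $\{x\longleftrightarrow S(x,R):B_1,\lambda^{+++}_{B_1}\}$ entails $\{x\longleftrightarrow S(x,R):B_1,\lambda^{++}_{B_1}\}$, so both conditions in the summand of $\wt{I\!I\!I}_{1,R}$ can be read off the single set $B_1\backslash({\rm range}\,Z^{B_1}_1\cup\dots\cup{\rm range}\,Z^{B_1}_{\lambda^{++}_{B_1}{\rm cap}(B_1)})$. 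On an $I\!I\!I_R$-good box, (\ref{2.28}) and (\ref{2.29}) applied between consecutive points of $\Sigma$ --- together with the fact, which the soft local time construction can be made to respect, that the level-orderings of the $Z^{B_1}_\ell$ and of the $Z^{D_0}_\ell$ are compatible --- identify the trace of that set on $\wt B_0$, for each $B_0\subseteq{\rm Deep}\,B_1$, with the trace on $\wt B_0$ of $\wt B_0\backslash({\rm range}\,Z^{D_0}_1\cup\dots\cup{\rm range}\,Z^{D_0}_m)$ for some integer $m$ lying strictly between the multiples of ${\rm cap}(D_0)$ given by the two points of $\Sigma$ flanking $\lambda^{++}_{B_1}$. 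Using $B(x,R)\cup B(x,[L_0/2])\subseteq\wt B_0$ for $x\in B_0\subseteq{\rm Deep}\,B_1$, the summand attached to $(B_0,x)$ is then at most
$1\{x\longleftrightarrow S(x,R)\text{ in }\wt B_0\backslash({\rm range}\,Z^{D_0}_1\cup\dots\cup{\rm range}\,Z^{D_0}_{\lambda^\flat{\rm cap}(D_0)})\}$
times
$1\{x\text{ not connected to }S(x,[L_0/2])\text{ in }\wt B_0\backslash({\rm range}\,Z^{D_0}_1\cup\dots\cup{\rm range}\,Z^{D_0}_{\lambda^\sharp{\rm cap}(D_0)})\}$,
where $\lambda^\flat<\lambda^{++}_{B_1}<\lambda^\sharp$ are the flanking grid points, plus the indicator of a defect event --- of super-polynomially small probability, uniform in the box --- on which some $N_v(D_0)$ ($v\in\Sigma$) strays far from its mean. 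Summing over $x\in B_0$ defines a local variable $W_{B_0}$, measurable with respect to the excursions of $\cI^\gamma$ into $U_0$; on an $I\!I\!I_R$-good box the contribution of $B_0$ to $\wt{I\!I\!I}_{1,R}$ does not exceed $W_{B_0}$ modulo the defect, whose aggregate volume is negligible by the reasoning behind Lemma \ref{lem2.3}.

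On the overwhelmingly likely event where all the $N_v(D_0)$, $v\in\Sigma$, are near their means, the first depleted set above is contained in $\cV^{v_1}\cap\wt B_0$ and the second contains $\cV^{v_2}\cap\wt B_0$, for suitable further inserted levels $v_1<v_2$ with $\lambda^+<v_1<v_2<\gamma$ --- this is the sole reason for the auxiliary levels of (\ref{2.14}): they leave just enough room for the one-step slack of (\ref{2.28})--(\ref{2.29}) and for the ensuing excursion-count-to-level conversion. Hence, since $\cV^{v_2}\subseteq\cV^{v_1}$,
$\IE[W_{B_0}]\le\sum_{x\in B_0}\IP[x\longleftrightarrow S(x,R)\text{ in }\cV^{v_1},\ x\text{ not in an infinite cluster of }\cV^{v_2}]$.
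Splitting according to whether $x$ lies in an infinite cluster of $\cV^{v_1}$, the probability is at most $\IP[0\longleftrightarrow S(0,R)\text{ in }\cV^{v_1},\ 0\text{ not in an infinite cluster of }\cV^{v_1}]$ plus $\IP[x\text{ in an infinite cluster of }\cV^{v_1}]-\IP[x\text{ in an infinite cluster of }\cV^{v_2}]$; by (\ref{2.13}) the first term is at most $10^{-5}\ve^2|\Sigma^0|^{-4}$, while --- $\theta_0$ being continuous on $[0,\gamma]$ --- the second equals $\theta_0(v_2)-\theta_0(v_1)\le10^{-3}\ve$ by (\ref{2.6}) (as $v_1,v_2$ lie in a single cell of $\Sigma^0$). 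Thus $\IE[W_{B_0}]\le2\cdot10^{-3}\ve\,|B_0|$.

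Finally, for each $\tau\in\{0,\dots,\ov K-1\}^d$ I would consider the boxes $B_{0,z}$ with $z\in L_0\tau+\ov K\IL_0$ and $B_{0,z}\subseteq D_N$: since $\ov K=2K+3$, their enlarged boxes $U_0$ are pairwise disjoint, so by the soft local time technique (in the spirit of \cite{Szni17}, \cite{PopoTeix15}, \cite{ComeGallPopoVach13}) the corresponding $W_{B_0}$ are, conditionally on the family of undertows of $\cI^\gamma$ on these boxes, independent, $[0,|B_0|]$-valued, with conditional mean at most $3\cdot10^{-3}\ve\,|B_0|$ on the typical event for the undertows. An exponential Chebyshev bound then gives, for every fixed $\delta>0$,
\[
\IP\Big[\sum_{z\in L_0\tau+\ov K\IL_0,\,B_{0,z}\subseteq D_N}W_{B_{0,z}}\ge\delta\,|D_N|\Big]\le\exp\big(-c(\delta,\ve,|\Sigma^0|)\,M\big)+(\text{super-polynomially small}),
\]
where $M$, the number of boxes in the family, is of order $(N/(\ov K L_0))^d$, comparable to $N^{d(d-2)/(d-1)}$; since $d(d-2)/(d-1)>d-2$, the right-hand side is $\exp(-E_N)$ with $E_N\gg N^{d-2}$. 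Summing over the $\ov K^d$ values of $\tau$, the $|\Sigma^0|$ values of $\lambda^+$ and the Lemma \ref{lem2.3} defect term, and taking $\delta$ small, yields (\ref{2.37}) for $K\ge c(\gamma,u,\ve)$. The step I expect to be the main obstacle is the localization of the second paragraph: turning the $B_1$-excursion conditions into $D_0$-excursion conditions --- legitimate only on $I\!I\!I_R$-good boxes --- is what exposes the productive $L_0$-scale, and it is precisely to absorb the resulting slack that the extra levels of (\ref{2.14}) were built in; a further, more routine, point is the negligibility of the $N_v(D_0)$-concentration defect, which, as for Lemmas \ref{lem1.4} and \ref{lem2.3}, once more hinges on decorrelation over the $\gg N^{d-2}$ well-spaced $B_0$-boxes.
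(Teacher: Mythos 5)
Your overall architecture is the right one, and it is essentially the paper's: bound $\wt{I\!I\!I}_{1,R}$ by a sum over $B_0$-boxes of local variables depending only on the excursions $Z^{D_0}_\ell$ (converting the $B_1$-excursion thresholds to $D_0$-excursion thresholds via the $I\!I\!I_R$-goodness conditions (\ref{2.28}), (\ref{2.29})); use the soft local time coupling over each of the $\ov{K}^d$ well-spaced sub-families $\cC_{0,\tau}$ to pass to i.i.d.\ variables; then close with a Cram\'er bound over $|\cC_{0,\tau}| \gg N^{d-2}$ boxes. You correctly identify that the productive scale for the super-exponential decay is $L_0$, not $L_1$, and that the inserted levels of (\ref{2.14}) are there to absorb the slack.

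The step that fails is the estimate of the conditional mean $\IE[W_{B_0}]$, and the failure is not cosmetic. You bracket the depleted set between two genuine vacant sets $\cV^{v_1} \supseteq \text{(depleted)} \supseteq \cV^{v_2}$ with $v_1 < v_2$, and you are forced to because with the \emph{actual} interlacement excursions, ordered by level, the two requirements ``removed set $\supseteq \cI^{v}\cap \wt{B}_0$'' (for the connection condition) and ``removed set $\subseteq \cI^{v}\cap \wt{B}_0$'' (for the non-connection condition) cannot be met by a single $v$ — they pull in opposite directions on $N_v(D_0)$. This leaves you with a two-level event, whose probability splits as a finite-cluster term, controlled by (\ref{2.13}), plus a level-difference term $\theta_0(v_2)-\theta_0(v_1)$. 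The latter is bounded only by the mesh $10^{-3}\ve$ of $\Sigma^0$. But you must then sum (or union-bound) over the $|\Sigma^0|$ choices of $\lambda^+$, and since $|\Sigma^0| \asymp \ve^{-1}$ (the grid is precisely chosen so that $\theta_0$ varies by $10^{-3}\ve$ between consecutive points, and $\theta_0$ has total variation of order $1$ on $[0,\gamma]$), the accumulated error is of order $10^{-3}\ve\cdot|\Sigma^0| \asymp 1$, not $\asymp \ve$. Concretely, your bound gives an expected total $\sum_{\lambda^+}\sum_{B_0}\IE[W^{(\lambda^+)}_{B_0}]$ of order $|D_N|$ — a constant multiple, not $o(\ve)\cdot|D_N|$ — so the target event $\{\wt{I\!I\!I}_{1,R}\ge \tfrac{7}{4}\ve\,|D_N|\}$ is not in the large-deviation regime and the Chebyshev step cannot produce any decay at all, let alone super-exponential.

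The paper avoids the level-difference term by a device you did not use: it passes to the \emph{independent} excursions $\wt Z^{D_0}_\ell$ of the soft local time coupling (which carry no intrinsic level ordering) and introduces an auxiliary independent Poisson variable $N^{D_0}_v$ with $v=\tfrac{\lambda+\lambda'}{2}$, see (\ref{2.58}). On the typical event $N^{D_0}_v\in(\lambda\,{\rm cap}(D_0),\lambda'\,{\rm cap}(D_0))$, \emph{both} the connection condition at $\lambda'$ and the non-connection condition at $\lambda$ are implied by the corresponding conditions at the single count $N^{D_0}_v$, and $N^{D_0}_v$ Poisson-many i.i.d.\ excursions from $\ov{e}_{D_0}$ reproduce (up to a re-entry error of order $R^{d-2}/K^{d-2}$) the interlacement at the \emph{single} level $v$. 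This turns the two-threshold event into the single-level finite-cluster event $\{0\stackrel{v}{\longleftrightarrow} S(0,R),\ 0\vv\;\infty\}$, whose probability is bounded by $10^{-5}\ve^2/|\Sigma^0|^4$ for all $v\le\gamma$ by (\ref{2.13}). After multiplying by the $|\Sigma|^2$ pairs, this remains $\ll \ve^2|B_0|/|\Sigma|^2$, i.e.\ small enough to survive the $|\Sigma^0|$-combinatorics — see (\ref{2.57})–(\ref{2.59}). So the missing ingredient in your argument is precisely this Poissonization to collapse two excursion thresholds to one level; without it, the level-difference error of order $\ve$ per grid point is fatal.

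A secondary, more minor point: you state that (\ref{2.28})–(\ref{2.29}) identify the trace of the $B_1$-depleted set on $\wt B_0$ with a $D_0$-depleted set for a suitable $m$. They give only inclusions (one for each direction, using two \emph{different} inserted levels flanking $\lambda^{++}_{B_1}$), not an identity; this is why the chain of eight inserted levels in (\ref{2.14}) is needed, and why the paper obtains $\wh\lambda^{+++}_{B_1}$ for the connection and $\wh\lambda^{++}_{B_1}$ for the non-connection in (\ref{2.38}), which are distinct. This is consistent with your intent but should be stated as a pair of one-sided comparisons rather than an identification.
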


\begin{proof}
As a first reduction we will bound $\wt{I\!I\!I}_{1,R}$ by a sum of identically distributed variables $Y_{B_0}$, see (\ref{2.39}) below, where $B_0$ ranges over $\cC_0$, see (\ref{2.9}), and each $Y_{B_0}$ solely involves the excursions $Z_\ell^{D_0}$, $\ell \ge 1$. Then we will use the soft local time technique of \cite{PopoTeix15} in the version of \cite{ComeGallPopoVach13} to bring into play stochastic domination by independent variables.

\medskip
We first note that by (\ref{2.28}), (\ref{2.29}) (recall from (\ref{2.14}) that $\lambda_{B_1}^{++} < \wh{\lambda}_{B_1}^{++} <  \wh{\lambda}_{B_1}^{+++} < \lambda_{B_1}^{+++}$ when $u_{B_1} < \gamma_-$):
\begin{equation}\label{2.38}
\begin{array}{l}
\wt{I\!I\!I}_{1,R}  \le \dsl_{B_1: I\!I\!I_R{\rm -good}, u_{B_1} < \gamma_- , B_1 \cap B u b = \phi} \; \dsl_{B_0 \subseteq {\rm Deep} \, B_1} \;\; \dsl_{x \in B_0} 1 \{x \longleftrightarrow S(x,R): D_0, \wh{\lambda}_{B_1}^{+++}  
\\
\mbox{and} \; x \lr \;\; \;S\big(x,\big[\mbox{\f $\dis\frac{L_0}{2}$}\big]\big): D_0, \wh{\lambda}^{++}_{B_1}\}
\\[1ex]
\mbox{(the notation is similar as in (\ref{2.18}) iii) with $D_0$ in place of $B_1$)}.
\end{array}
\end{equation}
Then for any box $B_0$, using the notation below (\ref{2.14}) for the eight inserted values above a generic point $\lambda^+$ of $\Sigma^0 \cap (0, \gamma_-]$, we set
\begin{equation}\label{2.39}
Y_{B_0} = \dsl_{\lambda^+ \le \gamma_- \,{\rm in} \, \Sigma^0} \;\; \dsl_{x \in B_0} \; 1\big\{ x \longleftrightarrow S(x,R): D_0, \wh{\lambda}^{+++} \;\mbox{and} \; x \lr \;\; S\big(x, \big[\mbox{\f $\dis\frac{L_0}{2}$}\big]\big): D_0, \wh{\lambda}^{++}\big\}
\end{equation}
and find that (with $\ov{K} = 2 K + 3$)
\begin{align}
\wt{I\!I\!I}_{1,R} & \le \dsl_{B_0 \in \cC_0} Y_{B_0} = \dsl_{\tau \in \{0,\dots, \ov{K}-1\}^d} \; \; \dsl_{B_0 \in \cC_{0,\tau}} Y_{B_0}, \;\mbox{where} \label{2.40}
\\
\cC_{0,\tau} & = \cC_0 \cap \{L_0 \tau + \ov{K} \IL_0\} \; \mbox{for each $\tau \in \{0,\dots , \ov{K}-1\}^d$ (and $\IL_0 = L_0 \,\IZ^d$, see (\ref{1.9}))}. \label{2.41}
\end{align}

\n
We will now stochastically dominate each sum $\sum_{B_0 \in \cC_{0,\tau}}  Y_{B_0}$, for $\tau \in \{0,\dots, \ov{K} - 1\}^d$. For this purpose we now recall some facts concerning the soft local time technique of \cite{PopoTeix15}, \cite{ComeGallPopoVach13}, and also refer to Section 4 of \cite{Szni17}, and Section 2 of \cite{Szni19d} for further details.

\medskip
Given any  $\tau \in \{0,\dots, \ov{K} - 1\}^d$, the soft local time technique provides a coupling $\IQ^\tau_0$ of the excursions $Z_\ell^{D_0}$, $\ell \ge 1$, $B_0 \in \cC_{0,\tau}$ of the random interlacements with independent excursions $\wt{Z}_\ell^{D_0}$, $\ell \ge 1$, $B_0 \in \cC_{0,\tau}$, respectively distributed as $X_{\point \wedge T_{U_0}}$ under $P_{\ov{e}_{D_0}}$, and independent right-continuous Poisson counting functions with unit intensity, vanishing at $0$, $(n_{D_0}(0,t))_{t \ge 0}$, $B_0 \in \cC_{0,\tau}$:
\begin{equation}\label{2.42}
\left\{\begin{array}{l}
\mbox{under $\IQ^\tau_0$, as $B_0$ varies over $\cC_{0,\tau}$, the $((n_{D_0}(0,t))_{t \ge 0}$, $\wt{Z}_\ell^{D_0}$, $\ell \ge 1$) are independent}
\\
\mbox{collections of independent processes with $(n_{D_0}(0,t))_{t \ge 0}$-distributed as a Poisson}
\\
\mbox{counting process of intensity $1$, and $\wt{Z}_\ell^{D_0}$, $\ell \ge 1$, as i.i.d. $\Gamma(U_0)$-valued variables}
\\
\mbox{(see notation above (\ref{1.1})) with same law as $X_{\point \wedge T_{U_0}}$ under $P_{\ov{e}_{D_0}}$}.
\end{array}\right.
\end{equation}

\n
The coupling $\IQ^\tau_0$ has an important property. For $x \in \IZ^d$ denote by $Q^0_x$ the joint law of two independent walks $X^1_\point$ and $X^2_\point$ respectively starting from $x$ and from the equilibrium measure of $\bigcup_{B_0 \in \cC_{0,\tau}} D_0$, and let $Y^0$ be the random variable equal to the location where $X^1_\point$ enters $\bigcup_{B_0 \in \cC_{0,\tau}} D_0$ if the corresponding entrance time is finite and $X^2_0$ otherwise. The important property of $\IQ^\tau_0$ is the following (see Lemma 2.1 of \cite{ComeGallPopoVach13}): if for some $\delta \in (0,1)$ and all $B_0 \in \cC_{0,\tau}$, $y \in D_0$ and $x \in \partial (\bigcup_{z \in \cC_{0,\tau}} U_{0,z})$
\begin{equation}\label{2.43}
\Big(1 - \mbox{\f $\dis\frac{\delta}{3}$}\Big) \; \ov{e}_{D_0} (y) \le Q^0_x [Y^0 = y \,| \, Y^0 \in D_0] \le \Big(1 + \mbox{\f $\dis\frac{\delta}{3}$} \Big) \, \ov{e}_{D_0} (y) ,
\end{equation}
then, for any $B_0 \in \cC_{0,\tau}$ and $m_0 \ge 1$, on the event
\begin{equation}\label{2.44}
\wt{\mbox{\f $U$}}^{m_0}_{D_0} = \big\{n_{D_0} \big(m,(1 + \delta)m\big) < 2 \delta m, (1-\delta) m < n_{D_0} (0,m) < ( 1 + \delta) m, \;\mbox{for all}\;  m \ge m_0\},
\end{equation}
one has for all $m \ge m_0$ the following inclusion among subsets of $\Gamma(U_0)$:
\begin{align}
\{\wt{Z}_1^{D_0}, \dots, \wt{Z}^{D_0}_{(1 - \delta)m}\} & \subseteq \{Z_1^{D_0}, \dots, Z^{D_0}_{(1 + 3 \delta)m}\}, \label{2.45}
\\[1ex]
\{Z_1^{D_0}, \dots,Z^{D_0}_{(1 - \delta)m}\} & \subseteq \{\wt{Z}_1^{D_0}, \dots, \wt{Z}^{D_0}_{(1 + 3 \delta)m}\}, \label{2.46}
\end{align}
where $\wt{Z}^{D_0}_v$ and $Z_v^{D_0}$ respectively stand for $ \wt{Z}^{D_0}_{[v]}$ and $Z^{D_0}_{[v]}$ when $v \ge 1$ and the sets in the left members of (\ref{2.45}) and (\ref{2.46}) are empty when $(1 - \delta) \, m < 1$. Importantly, the favorable event $\wt{U}^{m_0}_D$ is defined solely in terms of $(n_{D_0}(0,t))_{t \ge 0}$.

\medskip
We then set
\begin{equation}\label{2.47}
m_0 = [(\log L_0)^2] + 1.
\end{equation}

\n
Then, as in (4.11) of \cite{Szni17} (see also below (4.16) of the same reference), we can make sure that
\begin{equation}\label{2.48}
\begin{array}{l}
\mbox{when $K \ge c(\gamma, u, \ve)$, (\ref{2.43}) holds with a $\delta (\gamma, u, \ve) \in (0,1)$ such that}
\\[1ex]
\mbox{\f $\dis\frac{1 + 4 \delta}{1- \delta}$} < 1 + \mbox{\f $\dis\frac{\kappa}{2}$} \;\mbox{and} \; \mbox{\f $\dis\frac{1 - \delta}{1 + 4 \delta}$} > 1 - \mbox{\f $\dis\frac{\kappa}{2}$}, \;\mbox{with $\kappa$ as in (\ref{2.23})}.
\end{array}
\end{equation}
As a result, see (2.33), (2.34) of \cite{Szni19d}, for large $N$ so that for all $\lambda \in \Sigma$, $m = [\frac{\lambda}{1-\delta} \,{\rm cap}(D_0)] + 1$ satisfies $m \ge m_0$ as well as $(1 + 3 \delta)\,m < \frac{1 + 4 \delta}{1-\delta} \,\lambda {\rm cap}(D_0)$, and $m' = [\frac{\lambda}{1 + 3 \delta} {\rm cap} (D_0)]$ satisfies $m' \ge m_0$ as well as $(1-\delta) \,m' \ge \frac{1- \delta}{1 + 4 \delta} \,\lambda\, {\rm cap}(D_0)$, one has on the event $\wt{U}^{m_0}_{D_0}$, for all $\lambda \in \Sigma$:
\begin{align}
&\{Z_1^{D_0}, \dots, Z^{D_0}_{\lambda \,{\rm cap}(D_0)}\} \subseteq \{\wt{Z}^{D_0}_1,\dots, \wt{Z}^{D_0}_{(\frac{1 + 4 \delta}{1-\delta}) \lambda\,{\rm cap}(D_0)}\}, \;\mbox{and} \label{2.49}
\\[1ex]
&\{\wt{Z}_1^{D_0}, \dots, \wt{Z}^{D_0}_{(\frac{1-\delta}{1 + 4 \delta})\,\lambda \,{\rm cap}(D_0)}\} \subseteq \{Z^{D_0}_1,\dots,Z^{D_0}_{\lambda\,{\rm cap}(D_0)}\}. \label{2.50}
\end{align}
We then introduce for $B_0 \in \cC_{0,\tau}$ (with $\lambda < \lambda'$ elements of $\Sigma$ in the sum below, and we recall that the largest element of $\Sigma$ is $\gamma$, see (\ref{2.6}), and above (\ref{2.14}))
\begin{equation}\label{2.51}
\wt{Y}_{B_0} = \dsl_{\lambda < \lambda '}  \; \dsl_{x \in B_0} 1\{x \stackrel{\mbox{\Large $_\sim$}}{\longleftrightarrow} S(x, R): D_0, \lambda' \; \mbox{and} \;x\overset{\;\;\;\;\mbox{\Large $_\sim$}}{\lr} \;\;S\big(x, [\mbox{\f $\dis\frac{L_0}{2}$}\big]\big): D_0, \lambda\}
\end{equation}
where ``$\sim$'' above the arrows means that there is a connection between $x$ and $S(x,R)$ in \linebreak $D_0 \backslash({\rm range} \,\wt{Z}^{D_0}_1 \cup \dots \cup {\rm range} \, \wt{Z}^{D_0}_{\lambda ' \,{\rm cap}(D_0)})$ and an absence of connection between $x$ and $S(x, [\frac{L_0}{2}])$ in $D_0 \backslash  ({\rm range} \,\wt{Z}^{D_0}_1 \cup \dots \cup {\rm range} \, \wt{Z}^{D_0}_{\lambda  \,{\rm cap}(D_0)})$. By (\ref{2.23}) we have $(1-\kappa) \,\wh{\lambda}^{+++} > \wt{\lambda}^{+++}$ and $(1 + \kappa)\,\wh{\lambda}^{++} < \wt{\lambda}^{++}$ for each term in the sum defining $Y_{B_0}$ in (\ref{2.39}). Thus, making use of (\ref{2.48}) - (\ref{2.50}), we see that for large $N$,
\begin{equation}\label{2.52}
\mbox{for any $B_0 \in \cC_{0,\tau}$, on $\wt{U}_{D_0}^{m_0}$, one has $Y_{B_0} \le \wt{Y}_{B_0}$}.
\end{equation}
In addition, by (\ref{2.42}), (\ref{2.44}),
\begin{equation}\label{2.53}
\mbox{the $(\wt{U}^{m_0}_{D_0}$, $\wt{Y}_{B_0}$), for $B_0 \in \cC_{0,\tau}$ are i.i.d. under $\IQ^\tau_0$}.
\end{equation}
We then introduce the i.i.d. Bernoulli variables
\begin{equation}\label{2.54}
\mbox{$\wt{X}_{B_0} = 1_{(\wt{U}^{m_0}_{D_0})^c \cup \{\wt{Y}_{B_0} \ge \ve |B_0|\}}$, for $B_0 \in \cC_{0,\tau}$}.
\end{equation}
Then, by (\ref{2.52}), for large $N$, we have
\begin{equation}\label{2.55}
\dsl_{B_0 \in \cC_{0,\tau}} Y_{B_0} \le \dsl_{B_0 \in \cC_{0,\tau}} (\wt{Y}_{B_0} (1 - \wt{X}_{B_0}) + |\Sigma|^2 |B_0|\,\wt{X}_{B_0}) \le \dsl_{B_0 \in \cC_{0,\tau}} (\ve \,|B_0| +  |\Sigma |^2 \,|B_0| \,\wt{X}_{B_0}).
\end{equation}
We will now prove that
\begin{equation}\label{2.56}
\begin{array}{l}
\mbox{for large $N$, and any $\tau \in \{0,\dots, \ov{K} -1\}^d$, under $\IQ^\tau_0$ the i.i.d. Bernoulli}
\\
\mbox{variables $\wt{X}_{B_0}$ have success probability at most $\ve / (2 |\Sigma|^2)$}.
\end{array}
\end{equation}
Once (\ref{2.56}) is proved, it will follow from usual large deviation bounds on sums of i.i.d. Bernoulli variables that for large $N$ and each $\tau \in \{0,\dots, \ov{K}-1\}^d$, $\sum_{B_0 \in \cC_{0,\tau}} \,|\Sigma|^2\, \wt{X}_{B_0} < \frac{3}{4} \; \ve \, |\cC_{0,\tau}|$ except on a set of $\IQ^\tau_0$-probability at most $\exp\{-c(\gamma, u, \ve)|\cC_{0,\tau}|\}$.

\medskip
Observing that $|\cC_{0,\tau}| \ge c(K) (N/L_0)^d \gg N^{d-2}$, as $N \rightarrow \infty$, see (\ref{1.7}), we will conclude by (\ref{2.55}) that for each $\tau$, $\lim_N\; \frac{1}{N^{d-2}} \log \IP[\sum_{B_0 \in \cC_{0,\tau}} Y_{B_0} \ge  \frac{7}{4} \; \ve |B_0|\, |\cC_{0,\tau}|]  = - \infty$. Summing over $\tau$ and using (\ref{2.40}) the proof of Lemma \ref{lem2.2} will be completed.

\medskip
There remains to prove (\ref{2.56}). By classical exponential Chebyshev bounds on the Poisson distribution, with $\wt{U}^{m_0}_{B_0}$ as in (\ref{2.44}), one knows that $\IQ^\tau_0 [(\wt{U}^{m_0}_{D_0})^c]$ decays exponentially with $m_0$, see (4.20) of \cite{Szni17}. Hence, (\ref{2.56}) will follow by Chebyshev inequality once we show that
\begin{equation}\label{2.57}
\begin{array}{l}
\mbox{for $K \ge c(\gamma, u, \ve)$, for large $N$ and any $\tau \in \{0,\dots, \ov{K}-1\}^d$, and}
\\
B_0 \in \cC_{0,\tau}, E^{\IQ_0^\tau} [\wt{Y}_{D_0}] \le \frac{\ve^2}{4|\Sigma |^2} \;|B_0|.
\end{array}
\end{equation}
Writing $v = \frac{\lambda + \lambda '}{2}$ for any $\lambda < \lambda'$ in $\Sigma$, we denote by $N^{D_0}_v$ an independent Poisson variable with parameter $v \,{\rm cap}(D_0)$. Then we have, see (\ref{2.51}),
\begin{equation}\label{2.58}
\begin{array}{l}
E^{\IQ_0^\tau} [\wt{Y}_{B_0}] \le \dsl_{\lambda  < \lambda '} \; \dsl_{x \in B_0} \big(\IQ^\tau_0 [N^{D_0}_v \notin \big(\lambda \, {\rm cap}(D_0), \lambda' \,{\rm cap}(D_0)\big)\big]
\\
+ \; \IQ_0^\tau [N^{D_0}_v \in \big(\lambda \, {\rm cap} (D_0), \lambda' \,{\rm cap}(D_0)\big), x \stackrel{\mbox{\large $\sim$}}{\longleftrightarrow} S(x,R): D_0, \lambda' \;\mbox{and} 
\\[1ex]
x \stackrel{\;\;\;\mbox{\large $\sim$}}{\lr} \;\;S\big(x,\big[\mbox{\f $\dis\frac{L_0}{2}$}\big]\big): D_0, \lambda\big]\big).
\end{array}
\end{equation}
Using large deviation bounds on the Poisson distribution and comparing the effect on $B(x, [\frac{L_0}{2}])$ of $N^{D_0}_v$ independent excursions distributed as $X_{\point \wedge T_{U_0}}$ under $P_{\ov{e}_{D_0}}$ to the effect of $N^{D_0}_v$ independent excursions distributed as $X_\point$ under $P_{\ov{e}_{D_0}}$ (and hence of random interlacement at level $v$), we find that for large $N$, any $\tau \in \{0, \dots , \ov{K}-1\}^d$  and $B_0 \in \cC_{0,\tau}$: 
\begin{equation}\label{2.59}
\begin{array}{l}
E^{\IQ_0^\tau} [\wt{Y}_{B_0}] \le |\Sigma|^2 |B_0| \, e^{-c(\gamma, u,\ve)\,L_0^{d-2}} \;+
\\
\dsl_{\lambda < \lambda '} \;\dsl_{x \in B_0} \big(\IP \big[ x \stackrel{v}{\longleftrightarrow} S(x,R) \; \mbox{and} \; x \vv \;\;S\big(x, \big[\mbox{\f $\dis\frac{L_0}{2}$}\big]\big)\big] + \IP[\mbox{a trajectory in the}
\\
\mbox{interlacement at level $\le v$ enters $D_0$ and after exiting $U_0$ touches $B(x,R)]$)}
\\
\le |\Sigma |^2 |B_0| \big(e^{-c(\gamma, u, \ve) L_0^{d-2}} + \sup_{w \le \gamma}  \IP [0 \stackrel{w}{\longleftrightarrow} S(0,R) \;\mbox{and} \; 0 \ww \;\;\infty] + c \gamma \,L_0^{d-2} \;\mbox{\f $\dis\frac{R^{d-2}}{(K L_0)^{d-2}}$}\big)
\\
\stackrel{(\ref{2.13}), |\Sigma| \le 10 |\Sigma^0|}{\le} |B_0| \big(|\Sigma|^2 e^{-c(\gamma,u,\ve)L^{d-2}_0} + \mbox{\f $\dis\frac{1}{10 |\Sigma|^2}$} \;\ve^2 + |\Sigma |^2 \, c \gamma \; \mbox{\f $\dis\frac{R^{d-2}}{K ^{d-2}}$}\big).
\end{array}
\end{equation}
Since $\Sigma$ and $R$ are determined by $\gamma,u,\ve$ (and $d$) the claim (\ref{2.57}) holds. This proves (\ref{2.56}) and as explained below (\ref{2.56}) this completes the proof of Lemma \ref{lem2.2}. 
\end{proof}

\medskip
We have now established (\ref{2.33}) (thanks to (\ref{2.36}) and Lemma \ref{lem2.2}). There remains to show that with overwhelming probability the term $I\!I\!I_{2,R}$ in (\ref{2.32}) is small. This is the object of
\begin{lemma}\label{lem2.3}
\begin{equation}\label{2.60}
\mbox{For $K \ge c(\alpha, \beta, \gamma, u, \ve)$ one has $\lim\limits_N \; \mbox{\f $\dis\frac{1}{N^{d-2}}$} \;\log \IP [ I\!I\!I_{2,R} \ge \ve \,|D_N|] = - \infty$}.
\end{equation}
\end{lemma}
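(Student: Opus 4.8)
The plan is to reduce the estimate to a large deviation bound for well-separated $B_0$-boxes each carrying a ``local defect'', in the spirit of the proof of Lemma~\ref{lem1.4}. Recall from (\ref{2.31}) that $B_1$ is $I\!I\!I_R$-bad exactly when one of (\ref{2.27})--(\ref{2.30}) fails, that each $I\!I\!I_R$-bad box contributes $|B_1| = L_1^d$ to $I\!I\!I_{2,R}$, and that $|\cC_1| \le c\, |D_N|/L_1^d$. Hence on $\{I\!I\!I_{2,R} \ge \ve\, |D_N|\}$ at least $\frac{\ve}{4}\, |D_N|/L_1^d$ boxes $B_1 \in \cC_1$ violate one and the same of the four conditions; writing $\cF_{i,N}$ for the event that at least $\frac{\ve}{4}\, |D_N|/L_1^d$ boxes of $\cC_1$ violate condition $i$ (with $i$ ranging over (\ref{2.27})--(\ref{2.30})), one has $\IP[I\!I\!I_{2,R} \ge \ve\, |D_N|] \le \sum_i \IP[\cF_{i,N}]$, so it suffices to show $\limsup_N \frac{1}{N^{d-2}} \log \IP[\cF_{i,N}] = -\infty$ for each $i$.

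\emph{Per-box reduction.} The heart of the matter is that inside a box $B_1$ the violation of any of (\ref{2.27})--(\ref{2.30}) is caused by a local defect carried by some $B_0 \subseteq {\rm Deep}\, B_1$, whose probability $q_{L_0}$ (after a harmless union over the finite grid $\Sigma$) satisfies $\frac{1}{\log L_0}\log q_{L_0} \to -\infty$. For (\ref{2.27}) the defect is that $B_0$ is $(\alpha,\beta,\gamma)$-bad, and the stochastic domination recalled in the proof of Lemma~\ref{lem1.4} applies: over $\ov{K} L_0$-separated $B_0$-boxes the badness indicators are dominated by i.i.d.\ Bernoulli$(\eta(L_0))$ with $\frac{1}{\log L_0}\log \eta(L_0) \to -\infty$. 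For (\ref{2.28}) and (\ref{2.29}) I would use the soft local time coupling $\IQ^\tau_0$ of the proof of Lemma~\ref{lem2.2} at scale $L_0$, together with the analogous coupling at scale $L_1$ that places the $(\lambda\, {\rm cap}(B_1))$-th excursion $Z^{B_1}_\ell$ at a level within a grid spacing of $\lambda$; choosing $K \ge c(\gamma,u,\ve)$ so that (\ref{2.43}) holds and trajectories leaving $U_0$ and returning are negligible, on the favourable soft-local-time events the number of $D_0$-to-$\partial U_0$ excursions contained in $Z_1^{B_1},\dots,Z^{B_1}_{\lambda\, {\rm cap}(B_1)}$ is a Poisson-type variable of parameter close to $\lambda\, {\rm cap}(D_0) \asymp \lambda\, L_0^{d-2}$, so the deviations forbidden by (\ref{2.28}) and (\ref{2.29}) have probability at most $e^{-c(\gamma,u,\ve) L_0^{d-2}}$, to which one adds the probability of the complement of the soft-local-time events, exponentially small in $m_0 = [(\log L_0)^2]+1$ by (\ref{2.44}) and hence super-polynomially small in $L_0$. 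For (\ref{2.30}), again on these favourable events the obstacle ${\rm range}\, Z_1^{B_1} \cup \cdots \cup {\rm range}\, Z^{B_1}_{\lambda^{++}\, {\rm cap}(B_1)}$ restricted to $\wt{B}_0\ (\subseteq D_0)$ equals ${\rm range}\, Z_1^{D_0} \cup \cdots \cup {\rm range}\, Z^{D_0}_m$ for an $m$ within a grid spacing of $\lambda^{++}\, {\rm cap}(D_0)$, and likewise for $\lambda^+$, so the failure of (\ref{2.30}) forces the failure of the sprinkled connectivity event in (\ref{1.30}) with $\wt{B}_0$, $D_0$, $L_0$ in the roles of $\wt{B}$, $D$, $L$ and two grid levels in the roles of $v > w$ in $(0,\ov{u})$, an event of super-polynomially (in $L_0$) small probability by (\ref{1.30}).

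\emph{Large deviation count and conclusion.} On $\cF_{i,N}$, trimming the bad $B_1$-boxes to an $\ov{K}$-spaced sublattice of $\IL_1$ and picking one offending $B_0 \subseteq {\rm Deep}\, B_1$ in each yields at least $M_N := c(\ve,K)\, N^{d-2}/\log N$ $B_0$-boxes, pairwise at $|\cdot|_\infty$-distance at least $L_1 \ge \ov{K} L_0$, each carrying its defect; by the stochastic domination these are dominated by i.i.d.\ Bernoulli$(q_{L_0})$ over the $|\cC_0| \le c\, N^{d(d-2)/(d-1)}$ candidate boxes, and since $|\cC_0|\, q_{L_0} \to 0$ one gets for large $N$
\begin{equation*}
\IP[\cF_{i,N}] \le \binom{|\cC_0|}{M_N}\, q_{L_0}^{\, M_N} \le \Big(\frac{e\, |\cC_0|}{M_N}\, q_{L_0}\Big)^{M_N}.
\end{equation*}
Hence $\frac{1}{N^{d-2}}\log \IP[\cF_{i,N}] \le \frac{M_N}{N^{d-2}}\big(\log(e\, |\cC_0|/M_N) + \log q_{L_0}\big)$; since $M_N/N^{d-2} = c(\ve,K)/\log N$, $\log(e\, |\cC_0|/M_N) = O(\log N)$, and $\log q_{L_0} \le -A\log L_0 \sim -\tfrac{A}{d-1}\log N$ for every fixed $A$ and $N$ large, the right-hand side is bounded above by a constant depending on $\ve,K,A$ that tends to $-\infty$ as $A \to \infty$; as the left-hand side does not depend on $A$, it equals $-\infty$, and summing over the four $i$ proves (\ref{2.60}). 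The main obstacle is the per-box reduction for (\ref{2.28})--(\ref{2.30}): these conditions are expressed through the scale-$L_1$ excursions $Z^{B_1}_\ell$, whereas the large deviation argument needs a defect localised in a single $L_0$-box and measurable with respect to scale-$L_0$ data; bridging the two scales is precisely where the nested soft local time couplings and the a priori control of how many $D_0$-to-$\partial U_0$ excursions a prescribed number of $B_1$-to-$\partial U_1$ excursions produces are required, after which the remaining combinatorics is the routine balancing of Bernoulli/Poisson large deviations against the $O(\log N)$-per-box entropy of the coarse graining.
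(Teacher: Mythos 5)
Your overall plan (split by which of (\ref{2.27})--(\ref{2.30}) fails, then run a large-deviation/entropy count over well-separated boxes) is the right shape, and your treatment of (\ref{2.27}) and the reduction of (\ref{2.30}) to the sprinkled connectivity estimate (\ref{1.30}) both match the paper (see (\ref{2.61})--(\ref{2.62}) and (\ref{2.83})--(\ref{2.85})). The genuine gap is in your per-box reduction for (\ref{2.28})--(\ref{2.29}) and the ensuing count. The failure of (\ref{2.28}) or (\ref{2.29}) for a given pair $(B_1,B_0)$ is \emph{not} an $L_0$-local event: it depends on how the scale-$L_1$ excursions $Z^{B_1}_1,\dots,Z^{B_1}_{\lambda\,{\rm cap}(B_1)}$ distribute their $D_0$-to-$\partial U_0$ sub-excursions, which is measurable with respect to the $B_1$-to-$\partial U_1$ excursions, not with respect to the $D_0$-to-$\partial U_0$ excursions alone. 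Consequently the soft-local-time coupling $\IQ^\tau_0$ gives no i.i.d.\ Bernoulli$(q_{L_0})$ domination over $B_0$-boxes for this type of defect, and your bound $\IP[\cF_{i,N}]\le\binom{|\cC_0|}{M_N}q_{L_0}^{M_N}$ has no justification for $i$ being (\ref{2.28}) or (\ref{2.29}). You flag the scale-bridging issue yourself, but the proposed fix (``a Poisson-type variable of parameter close to $\lambda\,{\rm cap}(D_0)$'') is itself conditional on a scale-$L_1$ good event that must be counted separately, and that separate count is absent.

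The paper resolves this by working entirely at scale $L_1$ for (\ref{2.28})--(\ref{2.29}): under the scale-$L_1$ coupling $\IQ^\tau_1$ it defines good events $\wt{G}_{B_1}$ (see (\ref{2.71})), which are i.i.d.\ over $\cC_{1,\tau}$ and control simultaneously for all $B_0\subseteq{\rm Deep}\,B_1$ the number of $D_0$-to-$\partial U_0$ excursions inscribed in the first $m$ independent $B_1$-excursions. It shows $\wt{G}_{B_1}$ implies both (\ref{2.28}) and (\ref{2.29}) (see (\ref{2.82})), and via a Binomial concentration estimate anchored in the sandwich (\ref{2.76}) on $P_{\ov{e}_{B_1}}[H_{D_0}<T_{U_1}]$ and the scaling relation (\ref{2.73}), proves super-polynomial decay of $\IQ^\tau_1[(\wt{G}_{B_1})^c]$ in $L_1$ (see (\ref{2.75})). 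The large-deviation count is then run over $B_1$-boxes in $\cC_{1,\tau}$ (cf.\ Proposition 3.1 and (4.9) of \cite{Szni19d}), with $|\cC_1|\sim N^{d-2}/\log N$ and the super-polynomial decay in $L_1$ beating the $O(\log N)$ per-box entropy. Only afterwards, with (\ref{2.28})--(\ref{2.29}) in hand, does the paper reduce (\ref{2.30}) to the $L_0$-local property (\ref{2.84}) via (\ref{2.83}), where your scale-$L_0$ domination is then legitimate; applied directly to (\ref{2.28})--(\ref{2.29}) it is not.
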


\begin{proof}
Recall that $I\!I\!I_{2,R} = \sum_{B_1 \in \cC_1} |B_1| \,1\{B_1$ is $I\!I\!I_{R}$-bad$\}$ and the event $\{B_1$ is $I\!I\!I_{R}$-bad$\}$ entails that one of the conditions (\ref{2.27}) - (\ref{2.30}) does not hold. We will first show that 
\begin{equation}\label{2.61}
\begin{array}{l}
\mbox{for $K \ge c(\alpha, \beta, \gamma)$, $\lim_N \; \mbox{\f $\dis\frac{1}{N^{d-2}}$} \; \log \IP [IV \ge \mbox{\f $\dis\frac{\ve}{3}$} \;|D_N|\} = - \infty$, where we have set}
\\
\mbox{$IV = \dsl_{B_1 \in \cC_1} |B_1| \;1\{B_1$ does not satisfy (\ref{2.27})$\}$}.
\end{array}
\end{equation}
The claim is a variation on (\ref{1.42}) (it is not an immediate consequence of (\ref{1.42}) because as $N \r \infty$, by (\ref{1.7}), (\ref{1.8}), $|\cC_1| \sim c(N / L_1)^d \sim c\,N^{d-2} / \log N$ which might be small compared to $\rho(L_0) \, N^{d-2}$ in (\ref{1.42})). Keeping the same notation as in the proof of Lemma \ref{lem1.4}, and with $K \ge c_2 (\alpha, \beta, \gamma)$ (that corresponds to $c_8(\alpha, \beta, \gamma)$ above (5.5) of \cite{Szni17}), one finds that for any $\tau \in \{0, \dots , \ov{K}-1\}^d$, the indicator functions of the events $B_0$ is an $(\alpha, \beta, \gamma)$-bad box, for $B_0 \in \cC_{0,\tau}$, are stochastically dominated by i.i.d. Bernoulli variables with success probability $\eta(L_0)$.

\medskip
It follows that the indicator functions of the events $B_1$ contains an $(\alpha, \beta, \gamma)$-bad box $B_0 \in \cC_{0,\tau}$, as $B_1$ varies over $\cC_1$ are stochastically dominated by i.i.d. Bernoulli variables with success probability $\eta_1 = 1 \wedge \{(\frac{L_1}{L_0})^d \,\eta (L_0)\}$. Hence, from the super-polynomial decay of $\eta$, we find that $\lim_N \;\frac{1}{\log L_1} \;\log \eta_1 = - \infty$. Then, as in Proposition 3.1 and (4.9) of \cite{Szni19d}, we can conclude that for each $\tau \in \{0, \dots, \ov{K} - 1\}^d$,
\begin{align}
\lim\limits_N \; \mbox{\f $\dis\frac{1}{N^{d-2}}$}\; \log \IP[ & \mbox{there are at least $\mbox{\f $\dis\frac{\ve}{3 \ov{K}^d}$}\;|\cC_1|$ boxes in $\cC_1$ that contain an} \label{2.62}
\\
&\mbox{$(\alpha, \beta, \gamma)$-bad box of $\cC_{0,\tau}] = - \infty$}. \nonumber
\end{align}
Summing over $\tau$ and noting that $|D_N| \ge |\cC_1| \, |B_1|$ the claim (\ref{2.61}) follows. 

\medskip
Our next step is to show that
\begin{equation}\label{2.63}
\begin{array}{l}
\mbox{for $K \ge c(\gamma, u, \ve)$, $\lim_N \; \mbox{\f $\dis\frac{1}{N^{d-2}}$} \; \log \IP [V \ge \mbox{\f $\dis\frac{\ve}{3}$} \;|D_N|] = - \infty$, where we have set}
\\
\mbox{$V = \dsl_{B_1 \in \cC_1} |B_1| \;1\{B_1$ does not satisfy (\ref{2.28}) or (\ref{2.29})$\}$}.
\end{array}
\end{equation}
In analogy with (\ref{2.41}), for each $\tau \in \{0,\dots, \ov{K} - 1\}^d$ we define
\begin{equation}\label{2.64}
\cC_{1,\tau} = \cC_1 \cap \{L_1 \tau + \ov{K} \,\IL_1\} \; \mbox{(where $\IL_1 = L_1 \, \IZ^d$, see (\ref{1.10}))}.
\end{equation}
Replacing in (\ref{2.42}) - (\ref{2.46}) $D_0$ by $B_1$ and $U_0$ by $U_1$, one can construct for each $\tau \in \{0,\dots, \ov{K}-1\}^d$ a coupling $\IQ^\tau_1$ between the excursions $Z^{B_1}_\ell$, $\ell \ge 1$, $B_1 \in \cC_{1, \tau}$ of the random interlacements with independent excursions $\wt{Z}^{B_1}_\ell$, $\ell \ge 1$, $B_1 \in \cC_{1,\tau}$ respectively distributed as $X_{\point \wedge T_{U_1}}$ under $P_{\ov{e}_{B_1}}$, and independent right-continuous Poisson counting functions with unit intensity, vanishing at $0$, $(n_{B_1}(0,t))_{t \ge 0}$, $B_1 \in \cC_{1,\tau}$, so that the corresponding statement to (\ref{2.42}), with $B_1$ in place of $D_0$ and $U_1$ in place of $U_0$ holds. Then with $Q^1_x$ and $Y^1$ analogously defined as above (\ref{2.43}), the coupling has the following property: if for some $\delta \in (0,1)$ and all $B_1$ in $\cC_{1,\tau}$, $y \in B_1$ and $x \in \partial (\bigcup_{z \in \cC_{1,\tau}} U_{1,z})$ one has
\begin{equation}\label{2.65}
\big(1 - \mbox{\f $\dis\frac{\delta}{3}$}\big) \,\ov{e}_{B_1} (y) \le Q^1_x [Y^1 = y\,| \, Y^1 \in B_1] \le \big(1 +  \mbox{\f $\dis\frac{\delta}{3}$}\big)\,\ov{e}_{B_1}(y),
\end{equation}
then for $B_1 \in \cC_{1, \tau}$ on the event
\begin{equation}\label{2.66}
\wt{\mbox{\f $U$}}^{m_1}_{B_1} = \{n_{B_1} \big(m, (1 + \delta)\,m\big) < 2 \delta m, (1-\delta)\,m < n_{B_1} (0,m) < (1 + \delta)\,m, \;\mbox{for all}\; m \ge m_1\},
\end{equation}
one has for all $m \ge m_1$ the inclusions among subsets of $\Gamma(U_1)$:
\begin{align}
\{\wt{Z}^{B_1}_1, \dots , \wt{Z}^{B_1}_{(1-\delta)\,m}\} & \subseteq \{Z^{B_1}_1, \dots , Z^{B_1}_{(1 + 3 \delta)\,m}\}, \label{2.67}
\\[1ex]
\{Z^{B_1}_1, \dots , Z^{B_1}_{(1-\delta)\,m}\} & \subseteq \{\wt{Z}^{B_1}_1, \dots , \wt{Z}^{B_1}_{(1 + 3 \delta)\,m}\}, \label{2.68}
\end{align}
(with similar conventions as stated below (\ref{2.46})).

We then set
\begin{equation}\label{2.69}
m_1 = [(\log L_1)^2] + 1.
\end{equation}
We now choose $\delta(\gamma, u, \ve) \in (0,1)$ such that with $\kappa$ as in (\ref{2.23})
\begin{equation}\label{2.70}
\begin{array}{rl}
{\rm i)} & \dis\frac{1 + \kappa/10}{1 - \kappa/10}  \;\dis\frac{(1 + 4 \delta)^2}{(1 - 2 \delta)^2} \le 1 + \dis\frac{\kappa}{4}, \;\; \mbox{and}\quad  {\rm ii)} \; \Big(1-\dis\frac{\kappa}{10}\Big)  \;\dis\frac{1 - \delta}{1 + 4 \delta} \ge 1 -\dis\frac{\kappa}{4}.
\end{array}
\end{equation}
Then for $K \ge c(\gamma, u, \ve)$ we can make sure that (\ref{2.65}) holds (as explained above (\ref{2.48})), and ensure that for $B_1 \in \cC_{1,\tau}$ on $\wt{U}^{m_1}_{B_1}$ the statements (\ref{2.67}), (\ref{2.68}) hold for all $m \ge m_1$.

\medskip
We further introduce for $B_1 \in \cC_{1,\tau}$ the good event
\begin{equation}\label{2.71}
\begin{split}
\wt{G}_{B_1} = \wt{U}^{m_1}_{B_1} \cap\bigcap\limits_{B_0 \subseteq {\rm Deep}\,B_1} \Big\{ &\wt{Z}^{B_1}_1, \dots, \wt{Z}^{B_1}_m \; \mbox{contain at least $\Big(1 - \dis\frac{\kappa}{10}  \Big) \; \dis\frac{{\rm cap}(D_0)}{{\rm cap}(B_1)} \; m$ and}
\\[-1ex]
&\mbox{at most $\Big(1 + \dis\frac{\kappa}{10}\Big)\;\dis\frac{{\rm cap}(D_0)}{{\rm cap}(B_1)} \; m$ excursions from $D_0$ to $\partial U_0$}
\\
&\mbox{for all $m \ge m'_1\Big\}$},
\end{split}
\end{equation}

\vspace{-2ex}\noindent
where we set
\begin{equation}\label{2.72}
m'_1 = \Big[\Big( \dis\frac{{\rm cap}(B_1)}{{\rm cap}(D_0)}\Big)^2 (\log L_1)^2 \Big] + 1.
\end{equation}
An important feature of $m'_1$ is that when $N$ goes to infinity
\begin{equation}\label{2.73}
m'_1 \ge m_1, \; m'_1 \Big(\dis\frac{{\rm cap}(D_0)}{{\rm cap}(B_1)}\Big)^2 \ge (\log L_1)^2 \; \mbox{and $m'_1 = o\big({\rm cap}(B_1)\big)$}
\end{equation}
(the last property stems from the fact that $(\frac{L_1}{L_0})^2 = o(L_1)$ as $N \r \infty$, see below (\ref{1.8})).

Note that under $\IQ^\tau_1$
\begin{equation}\label{2.74}
\mbox{the events $\wt{G}_{B_1}$, $B_1 \in \cC_{1,\tau}$ are i.i.d.}.
\end{equation}
In addition, when $K$ is large, they are typical in the sense that
\begin{equation}\label{2.75}
\begin{array}{l}
\mbox{for} \; K \ge c(\gamma, u, \ve), \; \lim\limits_N \; \dis\frac{1}{\log L_1} \; \log \IQ^\tau_1 [(\wt{G}_{B_1})^c] = - \infty
\\
\mbox{(the above probability does not depend on $\tau$ nor on $B_1 \in \cC_{1,\tau}$)}.
\end{array}
\end{equation}
The proof of this fact is very similar to the proof of (4.15) of \cite{Szni17}. In that proof, Lemma 4.2 of \cite{Szni17} is now replaced by the estimate for $B_0 \subseteq {\rm Deep} \, B_1$:
\begin{equation}\label{2.76}
\dis\frac{{\rm cap}(D_0)}{{\rm cap}(B_1)}  \ge p \stackrel{\rm def}{=} P\,\!_{\ov{e}_{B_1}} [H_{D_0} < T_{U_1}] \ge \dis\frac{{\rm cap}(D_0)}{{\rm cap}(B_1)}  \; \Big(1 - \dis\frac{c}{K^{d-2}}\Big)
\end{equation}
(the first inequality follows from a straightforward sweeping argument, and the second inequality comes from writing
\begin{equation*}
\begin{split}
 P\,\!_{\ov{e}_{B_1}} [H_{D_0} < T_{U_1}] & =  P\,\!_{\ov{e}_{B_1}} [H_{D_0} < \infty] -  P\,\!_{\ov{e}_{B_1}} [T_{U_1} < H_{D_0} < \infty]
 \\[1ex]
 & \ge \dis\frac{{\rm cap}(D_0)}{{\rm cap}(B_1)}  - \sup\limits_{\partial U_1} \;P_x [H_{D_0} < \infty] \ge \dis\frac{{\rm cap}(D_0)}{{\rm cap}(B_1)}  \; \Big(1 - \dis\frac{c}{K^{d-2}}\Big) \; ).
\end{split}
\end{equation*}
The proof of (\ref{2.75}) follows the same steps, see (4.27) and (4.30) of \cite{Szni17}, as the proof of (4.15) of \cite{Szni17}, with minor adjustments. With $p \ge (1 - \frac{\kappa}{20}) \, \frac{{\rm cap}(D_0)}{{\rm cap}(B_1)}$ and $\wt{p} =  (1- \frac{\kappa}{10})\, \frac{{\rm cap}(D_0)}{{\rm cap}(B_1)}$ the lower bound on the rate function in (4.26) of \cite{Szni17} is now replaced by $c(p - \wt{p})^2 \ge c ((1-\frac{\kappa}{20}) \;\frac{{\rm cap}(D_0)}{{\rm cap}(B_1)} - (1-\frac{\kappa}{10}) \frac{{\rm cap}(D_0)}{{\rm cap}(B_1)} )^2 \ge c' \, \kappa^2 (\frac{{\rm cap}(D_0)}{{\rm cap}(B_1)})^2$, so that now (4.27) of \cite{Szni17} is replaced by the fact that $\sum_{m \ge m'_1} \exp\{- c' \,\kappa^2 (\frac{{\rm cap}(D_0)}{{\rm cap}(B_1)})^2 \,m\}$ decays super-polynomially in $N$ since $(\frac{{\rm cap}(D_0)}{{\rm cap}(B_1)})^2 \,m'_1 \ge (\log L_1)^2$, see (\ref{2.73}). The bound (4.29) of \cite{Szni17} is in our context essentially unchanged (with $\overset{\vee}{D}$ replaced by $B_1$ and $\kappa$ by $\frac{\kappa}{10}$) and at the end we note that $\frac{{\rm cap}(D_0)}{{\rm cap}(B_1)} \; m'_1 \ge (\log L_1)^2$.

\medskip
The interest of the above event $\wt{G}_{B_1}$ in (\ref{2.71}) comes from the fact that similarly to (4.12), (4.13) of \cite{Szni17}, with $\delta$ chosen as in (\ref{2.70}), as we explain below, it ensures for each $B_0 \subseteq {\rm Deep}\,B_1$ a good comparison of the successive excursions $\wh{Z}^{D_0}_1, \dots, \wh{Z}^{D_0}_\ell, \dots$ between $D_0$ and $\partial U_0$ in the sequence $\wt{Z}_1^{B_1}, \dots, \wt{Z}^{B_1}_k, \dots$ with the successive excursions $Z^{D_0}_1, \dots Z_\ell^{D_0}, \dots$ in the random interlacements.

\medskip
More precisely, first observe that for large $N$, for any $B_1 \in \cC_{1,\tau}$, on $\wt{G}_{B_1}$ one has
\begin{equation}\label{2.77}
\{\wt{Z}^{B_1}_1, \dots, \wt{Z}^{B_1}_{(1-\delta) m} \} \stackrel{(\ref{2.67})}{\subseteq} \{Z^{B_1}_1, \dots, Z^{B_1}_{(1 + 3 \delta) m}\} \stackrel{(\ref{2.68})}{\subseteq} \{\wt{Z}^{B_1}_1, \dots , \wt{Z}^{B_1}_{\frac{(1 + 4 \delta)^2}{1 - \delta} \;m}\}, \;\mbox{for all $m \ge m_1$}.
\end{equation}

\n
Next, for $B_0 \subseteq {\rm Deep}\,B_1$ denote by $\wh{Z}^{D_0}_1, \dots, \wh{Z}^{D_0}_\ell, \dots$ the successive excursions from $D_0$ to $\partial U_0$ inscribed in the $\wt{Z}^{B_1}_1, \dots , \wt{Z}^{B_1}_k, \dots$~. We then argue as below (4.17) of \cite{Szni17}. When $(1-\delta)\,m \ge m'_1$ the set of excursions on the left-hand side of (\ref{2.77}) contains at least $(1 - \frac{\kappa}{10}) [(1 - \delta) \,m] \; \frac{{\rm cap}(D_0)}{{\rm cap}(B_1)} \ge t = (1 - \frac{\kappa}{10}) (1 - 2 \delta)\,m \frac{{\rm cap}(D_0)}{{\rm cap}(B_1)}$ excursions from $D_0$ to $\partial U_0$, and the set on the right-hand side contains at most $(1 + \frac{\kappa}{10}) \;\frac{(1 + 4 \delta)^2}{1 - \delta} \;m \frac{{\rm cap}(D_0)}{{\rm cap}(B_1)} \stackrel{(\ref{2.70}\,{\rm i)}}{\le} (1 + \frac{\kappa}{4}) \;t$ excursions from $D_0$ to $\partial U_0$.

\medskip
Hence, looking at the first $t$ excursions $\wh{Z}^{D_0}_\ell$, $1 \le \ell \le t$ (which are inscribed in the set of excursions on the leftmost member of (\ref{2.77})) and the first $Z^{D_0}_\ell$, $1 \le \ell \le (1 + \frac{\kappa}{4})\;t$ (which exhaust all excursions from $D_0$ to $\partial V_0$ inscribed in the set of excursions in the middle of (\ref{2.77})), we see that
\begin{equation}\label{2.78}
\big\{ \wh{Z}^{D_0}_1, \dots, \wh{Z}^{D_0}_t\big\} \subseteq \{Z_1^{D_0}, \dots , Z^{D_0}_{(1 + \frac{\kappa}{4}) \,t} \big\}.
\end{equation}
Moreover, looking at the first $t$ excursions from $D_0$ to $\partial U_0$ contained in the set of excursions in the middle of (\ref{2.77}), and at the first $(1 + \frac{\kappa}{4})\,t$ excursions $\wh{Z}^{D_0}_\ell$, $1 \le \ell \le (1 + \frac{\kappa}{4})\,t$ (which exhaust all excursions from $D_0$ to $\partial U_0$ inscribed within the set of excursions in the rightmost member of (\ref{2.77})), we see that
\begin{equation}\label{2.79}
\{ Z^{D_0}_1, \dots , Z_t^{D_0}\} \subseteq \{\wh{Z}^{D_0}_1,\dots , \wh{Z}^{D_0}_{(1 + \frac{\kappa}{4})\,t}\}.
\end{equation}
Note that $[t]$ covers all integers bigger or equal to $(1 - \frac{\kappa}{10}) (1 - 2 \delta)\ m'_1 \, \frac{{\rm cap}(D_0)}{{\rm cap}(B_1)}$ as $m$ runs over the set of integers bigger or equals to $m'_1$, in particular all integers bigger or equal to $t_N = \frac{{\rm cap}(B_1)}{{\rm cap}(D_0)} \;(\log L_1)^2$. So we have for large $N$, for any $B_1 \in \cC_{1,\tau}$, on $\wh{G}_{B_1}$:
\begin{equation}\label{2.80}
\left\{\begin{array}{rl}
{\rm i)} & \{ \wh{Z}^{D_0}_1, \dots , \wh{Z}_\ell^{D_0}\} \subseteq \{Z^{D_0}_1,\dots , Z^{D_0}_{(1 + \frac{\kappa}{4})\,\ell}\}, \; \mbox{for all $\ell \ge t_N$},
\\[2ex]
{\rm ii)} & \{ Z^{D_0}_1, \dots , Z_\ell^{D_0}\} \subseteq \{\wh{Z}^{D_0}_1,\dots , \wh{Z}^{D_0}_{(1 + \frac{\kappa}{4})\, \ell}\}, \; \mbox{for all $\ell \ge t_N$}.
\end{array}\right.
\end{equation}
Note that $L_0 \sim N^a$ as $N \r \infty$, with $a = \frac{1}{d-1} > \frac{1}{d}$, and in the same spirit as the observation below (\ref{2.73}), we now find that
\begin{equation}\label{2.81}
t_N = o\big({\rm cap}(D_0)\big), \;\mbox{as $N \r \infty$}.
\end{equation}
As we now explain,
\begin{equation}\label{2.82}
\begin{array}{l}
\mbox{for large $N$, for any $\tau \in \{0, \dots, \ov{K} - 1\}^d$ and any $B_1 \in \cC_{1,\tau}$, on $\wt{G}_{B_1}$}
\\
\mbox{both (\ref{2.28}) and (\ref{2.29}) hold.}
\end{array}
\end{equation}
We begin with the case of (\ref{2.28}). Recall from (\ref{2.73}) that $m_1 \le m'_1 = o({\rm cap}(B_1))$, as $N \r \infty$. Thus, for large $N$ with $\tau$ and $B_1$ as above, on $\wt{G}_{B_1}$ for any $\lambda < \lambda'$ in $\Sigma$ and $B_0 \subseteq {\rm Deep} \,B_1$, the first $\lambda {\rm cap}(B_1)$ excursions $\{Z_1^{B_1},\dots, Z^{B_1}_{\lambda {\rm cap}(B_1)}\}$ are contained by (\ref{2.77}) in $\{\wt{Z}^{B_1}_1, \dots, \wt{Z}^{B_1}_{\frac{(1 + 4 \delta)^2}{1 - \delta} \,\lambda {\rm cap}(B_1)}\}$ which contain by definition of $\wt{G}_{B_1}$ in (\ref{2.71}) at most $(1 + \frac{\kappa}{10}) \frac{(1 + 4 \delta)^2}{1-\delta} \;\lambda \, {\rm cap}(D_0) \stackrel{(\ref{2.70}\,i)}{\le} (1 + \frac{\kappa}{4}) \, \lambda \, {\rm cap}(D_0)$ excursions from $D_0$ to $\partial U_0$. So the excursions from $D_0$ to $\partial U_0$ contained in $\{Z_1^{B_1},\dots, Z^{B_1}_{\lambda {\rm cap}(B_1)}\}$ are among $\{\wh{Z}_1^{D_0},\dots, \wh{Z}^{D_0}_{(1 + \frac{\kappa}{4}) \,\lambda {\rm cap}(D_0)}\} \subseteq \{Z^{D_0}_1, \dots , Z^{D_0}_{(1 + \frac{\kappa}{4})^2 \, \lambda {\rm cap}(D_0)}\}$, by (\ref{2.80}) i). Since $(1 + \frac{\kappa}{4})^2 \lambda < \lambda '$ when $\lambda < \lambda '$ in $\Sigma$ by (\ref{2.23}), this shows that (\ref{2.28}) holds.

\medskip
In the case of (\ref{2.29}), we observe that for large $N$, with $\tau$ and $B_1$ as above, on $\wt{G}_{B_1}$ for $\lambda < \lambda'$ in $\Sigma$ and $B_0 \subseteq {\rm Deep} \, B_1$, the first $\lambda' {\rm cap} (B_1)$ excursions $Z_1^{B_1}, \dots , Z^{B_1}_{\lambda ' {\rm cap}(B_1)}$ contain by (\ref{2.77}) the excursions $\wt{Z}^{B_1}_1, \dots , \wt{Z}^{B_1}_{(1 - \delta) [\frac{\lambda'}{1 + 3 \delta} \,{\rm cap} (B_1)]}$, and by definition of $\wt{G}_{B_1}$ this last collection contains the excursions $\wh{Z}^{D_0}_\ell$, $1 \le \ell \le (1 - \frac{\kappa}{10}) \l \frac{1- \delta}{1 + 4 \delta} \; \lambda' {\rm cap}(D_0)$, which by (\ref{2.80}) ii) contain the excursions $Z^{D_0}_\ell$, $1 \le \ell \le a \, \lambda' \,{\rm cap}(D_0)$ where $a = \frac{1}{1 + \kappa/2} \; (1 - \frac{\kappa}{10}) \;\frac{(1-\delta)}{1 + 4 \delta} \stackrel{(\ref{2.70}) ii)}{\ge} \frac{1- \kappa/4}{1 + \kappa/2} \ge 1 - \kappa$, so that $a \, \lambda' > \lambda$ by (\ref{2.23}). This proves that (\ref{2.29}) holds. We have thus shown (\ref{2.82}).

\bigskip
Making use of (\ref{2.74}), (\ref{2.75}), a similar calculation as in Proposition 3.1 of \cite{Szni19d} (see also (4.9) of \cite{Szni19d}) completes the proof of (\ref{2.63}).

\medskip
There remains to handle the case of (\ref{2.30}). To this effect we first observe that when (\ref{2.28}) and (\ref{2.29}) hold the condition (\ref{2.83}) below implies that (\ref{2.80}) holds as well, where we have set (with the notation below (\ref{2.14}))
\begin{equation}\label{2.83}
\begin{array}{l}
\mbox{for all $B_0 \subseteq {\rm Deep} \, B_1$ and all $\overset{\vee}{\lambda}\,\!^{++}$ in the grid $\Sigma$, if two connected sets in}
\\
\mbox{$\wt{B}_0 \backslash ({\rm range} \,Z_1^{D_0} \cup \dots \cup {\rm range} \, Z^{D_0}_{\overset{\vee}{\lambda}\,\!^{++} {\rm cap} (D_0)})$ have diameter at least $\frac{L_0}{10}$, then}
\\[2ex]
\mbox{they are connected in $D_0 \backslash ({\rm range} \, Z_1^{D_0} \cup \dots \cup {\rm range} \,Z^{D_0}_{\overset{\vee}{\lambda}\,\!^+ {\rm cap}(D_0)})$}.
\end{array}
\end{equation}
It is then convenient to say that for $\lambda < \lambda'$ in $(0,\ov{u})$ the box $B_0$ is $(\lambda, \lambda')${\it -good} if
\begin{equation}\label{2.84}
\begin{array}{l}
\mbox{any two connected sets in $\wt{B}_0 \backslash ({\rm range} \,Z_1^{D_0} \cup \dots \cup {\rm range} \, Z^{D_0}_{\lambda' {\rm cap}(D_0)}$) having}
\\
\mbox{diameter at least $\frac{L_0}{10}$ are connected in $D_0 \backslash ({\rm range} \, Z_1^{D_0} \cup \dots \cup {\rm range}\, Z^{D_0}_{\lambda {\rm cap} (D_0)})$},
\end{array}
\end{equation}
and $(\lambda, \lambda ')${\it - bad} otherwise. Then, in view of (\ref{2.61}) and (\ref{2.63}) and the observation above (\ref{2.83}), the claim (\ref{2.60}) will follow, and the proof of Lemma \ref{lem2.3} will be completed, once we show
\begin{equation}\label{2.85}
\begin{array}{l}
\mbox{for $K \ge c(\gamma, u, \ve)$, for any $\tau \in \{0,\dots, \ov{K} - 1\}^d$, and $\lambda < \lambda '$ in $\Sigma$,}
\\
\lim\limits_N\;\dis\frac{1}{N^{d-2}} \; \log \IP[\mbox{there are at least $\frac{\ve}{3 \ov{K}^d |\Sigma |^2}\;  | \cC_1 |$ boxes $B_1$ that contains}
\\
\hspace{2.7cm} \mbox{a $(\lambda, \lambda ')$-bad box of $\cC_{0,\tau}] = -\infty$}.
\end{array}
\end{equation}
The proof of (\ref{2.85}) is similar, but substantially simpler than the proof of (\ref{2.62}). We briefly sketch the argument. One uses the soft local technique as in (\ref{2.42}) - (\ref{2.46}) (using a possibly smaller $\delta$ than in (\ref{2.48}) and large enough $K$ to ensure (\ref{2.43})), and for large $N$ stochastically dominates the events ``$B_0$ is $(\lambda, \lambda ')$-bad'', for $B_0 \in \cC_{0,\tau}$, by the events $(\wt{U}^{m_0}_{D_0})^c \cup \{$there are two connected sets in $\wt{B}_0 \backslash ({\rm range} \,\wt{Z}^{D_0}_1 \cup \dots \cup {\rm range} \,\wt{Z}^{D_0}_{\frac{1}{7} ( \lambda + 6 \lambda '){\rm cap}(D_0)}$) that are not connected in $D_0 \backslash ({\rm range} \, \wt{Z}_1^{D_0} \cup \dots \cup {\rm range} \, \wt{Z}^{D_0}_{\frac{1}{7} (6 \lambda + \lambda'){\rm cap}(D_0)})\}$.

\bigskip
In turn, the probability of such events (that are i.i.d.) is controlled by the probability of $(\wt{U}^{m_0}_{D_0})^c \cup \{B_0$ is $(\frac{1}{7} (5 \lambda + 2 \lambda')$, $\frac{1}{7} (2 \lambda + 5 \lambda ')$)-bad$\}$. Then, to show the super-polynomial decay in $L_0$ of the probability of such events one brings into play (\ref{1.30}) with $w = \frac{1}{7} (4 \lambda + 3 \lambda') < v = \frac{1}{7} \,(3 \lambda + 4 \lambda ')$ as well as the unlikely events $\{N_w (D_0) < \frac{1}{7} (5 \lambda + 2 \lambda ') {\rm cap}(D_0)\}$ and $\{N_v(D_0) > \frac{1}{7} \;(2 \lambda + 5 \lambda') {\rm cap} (D_0)\}$ (with $K \ge c(\lambda, \lambda')$, see below (2.22) of \cite{Szni17}). Then one can argue as above (\ref{2.62}) and conclude that (\ref{2.85}) holds. As explained above (\ref{2.85}) the proof of Lemma \ref{lem2.3} follows. \end{proof}

\medskip
With (\ref{2.33}) and Lemma \ref{lem2.3} we have thus completed the proof of (\ref{2.20}). Together with (\ref{2.19}) this completes the proof of Theorem \ref{theo2.1}.
\hfill  \mbox{\Large $\square$}

\section{On the cost of bubbles}

From Theorem \ref{theo2.1} in the previous section we know that we can replace the excess event $\cA_N$ with the event $\cA'_N$ from (\ref{2.11}) in our quest for an asymptotic upper bound on $\IP[\cA_N]$. The constraint expressed by $\cA'_N$ involves the volume of the bubble set. The main objective of this section is to construct a coarse grained random object (namely, the equilibrium potential of a random set $C_\omega$ of ``low complexity'') that will endow us with a tool to show that the bubble set induces a cost. This feature will play a major role in the next section. The challenge stems from the fact that the bubble set may be very irregular with little depth apart from its constitutive grains of size $L_1$. There is no additional thickening in the problem we study and this precludes the use of the coarse graining procedures from Section 4 of \cite{NitzSzni} (see also \cite{Szni19b} and \cite{ChiaNitz20a}).

\medskip
In this section we assume that
\begin{align}
& 0 < u < \ov{u}, \label{3.1}
\\[1ex]
& \alpha > \beta > \gamma \; \mbox{belong to $(u,\ov{u})$},  \label{3.2}
\\[1ex]
& 0 < \ve < 10^{-3} . \label{3.3}
\intertext{Further, with $c_1$ as in Lemma \ref{lem1.2} and $c_2(\alpha, \beta, \gamma)$ as in Lemma \ref{lem1.4}, we assume that}
&K \ge c_1 \vee c_2 (\alpha, \beta, \gamma). \label{3.4}
\end{align}
We also recall the asymptotically negligible bad event $\cB_N$ defined in (\ref{1.42}) and the bubble set $B u b$ from (\ref{1.47}). Here is the main result of this section. We recall that $\ov{K} = 2 K + 3$.

\begin{theorem}\label{theo3.1}
There exists a dimension dependent constant $c_0 \in (0,1)$ such that for $u, \alpha, \beta, \gamma, \ve, K$ as in (\ref{3.1}) - (\ref{3.4}), for large $N$ on $\cB^c_N$, one can construct a random subset $C_\omega$ of $[-4N, 4N]^d$, which is a union of $B_0$-boxes and satisfies the following properties:
\begin{equation}\label{3.5}
\left\{ \begin{array}{rl}
{\rm i)} & \mbox{for all $B_0 \subseteq C_\omega$, $B_0$ is $(\alpha, \beta, \gamma)$-good and $N_u (D_0) \ge \beta\, {\rm cap}(D_0)$},
\\[1ex]
{\rm ii)} & \mbox{the $B_0 \subseteq C_\omega$ have base points at  mutual  sup-distance at least $\ov{K} \,L_0$}, 
\\[1ex]
{\rm iii)} & \mbox{the set $\cS_N$ of possible values of $C_\omega$ is such that $|\cS_N| = \exp\{o(N^{d-2})\}$},
\\[1ex]
{\rm iv)} & \mbox{the $2 \ov{K} L_1$-neighborhood of $C_\omega$ has volume at most $\ve \,|D_N|$}, 
\\[1ex]
{\rm v)} & \mbox{if $h_{C_\omega}$ stands for the equilibrium potential of $C_\omega$ (see (\ref{1.4})), one has}
\\
& |\{x \in B u b; h_{C_\omega}(x) < c_0\} \le \ve \, |D_N|.
\end{array}\right.
\end{equation}
\end{theorem}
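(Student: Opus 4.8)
The raw material for $C_\omega$ is the set $\Sigma:=\partial_{B_0}\cU_1\cap[-4N,4N]^d$, or rather its ``good part'' $\Sigma^g$ consisting of the $(\alpha,\beta,\gamma)$-good boxes in $\partial_{B_0}\cU_1$. Two structural remarks drive everything. First, by the very definition of $\cU_1$ in (\ref{1.40}), an $(\alpha,\beta,\gamma)$-good box that is a neighbour of $\cU_1$ but not in $\cU_1$ must have $N_u(D_0)\ge\beta\,{\rm cap}(D_0)$ (otherwise it would be appended to a good highway and lie in $\cU_1$); hence \emph{any} sub-union of $\Sigma^g$ automatically satisfies (i), and on $\cB^c_N$ the discarded bad part $\Sigma\setminus\Sigma^g$ has at most $\rho(L_0)N^{d-2}$ boxes, which since $\rho(L_0)\to0$ is negligible in box-count, in volume, and in projected area. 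Second, since simple random walk is transient it a.s.\ leaves $[-3N,3N]^d$, and by (\ref{1.40})--(\ref{1.41}) such a trajectory must pass through a box of $\partial_{B_0}\cU_1$; thus $h_\Sigma\equiv1$ on $\cU_1^c$, in particular on $\bigcup_{B_1\subseteq Bub}{\rm Deep}\,B_1$. So the whole task is to replace the large and high-complexity set $\Sigma^g$ by a manageable $C_\omega$ keeping the equilibrium potential comparable on $Bub$.

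\textbf{The capacitary extraction.} The key device is the combination of Lemma \ref{lem1.1} and Lemma \ref{lem1.2}: whenever a union $A$ of $B_0$-boxes inside an $L$-box $B$ (with $L\ge L_1$) has a coordinate projection of size $\ge a\,|B|^{(d-1)/d}$, one can extract a sub-union $A'\subseteq A$ that is $\ov{K}L_0$-well-spaced, carries capacity ${\rm cap}(A')\ge c(a)\,L^{d-2}$, and consists of only $|A'|/|B_0|\le c'(L/L_0)^{d-2}$ boxes. Because $\frac1{d-1}<\frac2d$, the count $(L/L_0)^{d-2}$ is negligible against $(L/L_0)^d$, which is what makes the complexity and volume bounds affordable. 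Concretely I would coarse-grain from a ``bird's-eye'' scale (or, to be safe, along a geometric hierarchy $L_1=\ell_0<\ell_1<\cdots<\ell_m\simeq N$ of scales chosen above $N^{d/(2(d-1))}$): call an $\ell_j$-box $\hat B$ \emph{obstructed} when $\Sigma^g$ has a large coordinate projection in its threefold dilate $3\hat B$, apply the extraction there to obtain $\Sigma'_{\hat B}$ (using a colouring of the $\ell_j$-tiling and a thin-shell trimming so the pieces stay mutually $\ov{K}L_0$-spaced), and set $C_\omega=\bigcup_{\hat B\ \text{obstructed}}\Sigma'_{\hat B}$. Summing the box budgets over all scales gives $|C_\omega|/|B_0|=o(N^{d-2}/\log N)$; since $[-4N,4N]^d$ contains only $O\!\big(N^{\frac{d(d-2)}{d-1}}\big)$ positions of $B_0$-boxes, the number of possible values of $C_\omega$ is at most $\binom{O(N^{d(d-2)/(d-1)})}{o(N^{d-2}/\log N)}=\exp\{o(N^{d-2})\}$, giving (iii); and the $2\ov{K}L_1$-neighbourhood of $C_\omega$ has volume $\le(|C_\omega|/|B_0|)\cdot cL_1^d=o(N^d)\le\ve|D_N|$, giving (iv). Property (i) is automatic and (ii) is built into the extraction.

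\textbf{The potential bound (v), which is the heart.} The soft half is immediate: for $x$ in an obstructed $\ell_j$-box $\hat B$ one has $\Sigma'_{\hat B}\subseteq3\hat B\ni x$ and ${\rm cap}(\Sigma'_{\hat B})\ge c(a)\ell_j^{d-2}$, whence
$$h_{C_\omega}(x)\ \ge\ h_{\Sigma'_{\hat B}}(x)\ =\ \langle g(x,\cdot),e_{\Sigma'_{\hat B}}\rangle\ \ge\ \Big(\min_{y\in 3\hat B}g(x,y)\Big)\,{\rm cap}(\Sigma'_{\hat B})\ \ge\ \widetilde c\,\ell_j^{2-d}\cdot c(a)\,\ell_j^{d-2}\ =:\ c_0\in(0,1),$$
a manifestly dimensional constant; this is the only place $c_0$ enters. (The same computation shows that a barrier even at distance $O(N)$ from $x$ still contributes $\gtrsim N^{2-d}\cdot{\rm cap}\gtrsim$ const as soon as its capacity is $\gtrsim N^{d-2}$, which is exactly why a globally-extracted, ``spread out'' barrier is perfectly good.) Consequently $\{x\in Bub:\ h_{C_\omega}(x)<c_0\}$ is confined to the $Bub$-content of the $\ell_j$-boxes that are \emph{cleared} at every relevant scale. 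To bound this by $\ve|D_N|$: by Loomis--Whitney a cleared box contains only $o(|\hat B|)$ of $\Sigma^g$; the crucial geometric input is that a set separating two non-negligible portions of a box has a large coordinate projection, so a cleared box with non-negligible $Bub$-content must be essentially engulfed by $\cU_1^c$; such boxes assemble into ``macro-bubbles'' $\mathcal M$, and since the $\cU_1$ that surrounds $\mathcal M$ is connected to infinity it forces, at the appropriate scale, a genuine large-projection interface and hence obstructed boxes along $\partial\mathcal M$, whose $\Sigma'$'s lie within $O({\rm diam}\,\mathcal M)$ of every point of $\mathcal M$ and collectively carry capacity $\gtrsim{\rm cap}(\mathcal M)$; invoking ${\rm cap}(\mathcal M)\ge c\,({\rm diam}\,\mathcal M)^{d-2}$ and the same $g$-lower bound yields $h_{C_\omega}\ge c_0$ there too. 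What genuinely remains outside — thin $\cU_1$-tendrils, the $3L_0$-shell separating ${\rm Deep}\,B_1$ from $B_1$, and the $\le\rho(L_0)N^{d-2}$ bad boxes — has total volume $o(N^d)$.

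\textbf{Expected main obstacle.} Everything soft — the potential estimates, the capacity extraction via Lemmas \ref{lem1.1}--\ref{lem1.2}, the complexity/volume bookkeeping, and tracking the mismatches between the $B_0$-, $B_1$-, $[-3N,3N]^d$- and ${\rm Deep}\,B_1$-scales — is routine. The real difficulty is the geometric dichotomy underlying the last paragraph: quantitatively turning ``$Bub$ occupies a non-negligible fraction of $D_N$'' into ``$\Sigma^g$ carries, at some affordable scale, a coordinate-projected mass (hence capacity) large enough to barrier $Bub$ up to volume $\ve|D_N|$''. This is precisely where the absence of any thickening bites: $\partial_{B_0}\cU_1$ is irregular and lacks inner depth, so the coarse-graining of Section~4 of \cite{NitzSzni} is unavailable, and one must instead run the new ``bird's-eye'' coarse-graining, borrowing ideas from the method of enlargement of obstacles (cf.\ Chapter~4 of \cite{Szni98a}), to localize and iterate the projection argument correctly across scales.
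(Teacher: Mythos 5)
Your raw material, the set $\Sigma^g$ of $(\alpha,\beta,\gamma)$-good boxes of $\partial_{B_0}\cU_1$, is exactly the paper's, and all three soft ingredients you isolate are used there in the same way: the observation that such a good boundary box necessarily has $N_u(D_0)\ge\beta\,{\rm cap}(D_0)$, the extraction via Lemmas \ref{lem1.1}--\ref{lem1.2}, and the Green-function lower bound $h_{C_\omega}\ge(\min g)\cdot{\rm cap}$ producing the dimensional constant $c_0$ (this is precisely how $c_0$ arises around (\ref{3.17}) and (\ref{3.53})--(\ref{3.59})). The gap is in the bookkeeping for (iii) and (iv), and it is not a cosmetic one --- it conceals the real new idea of the section. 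If you declare an $\ell_j$-box obstructed whenever $\Sigma^g$ projects well inside its dilate and extract $\sim(\ell_j/L_0)^{d-2}$ $B_0$-boxes from each, then with no prior bound on how many boxes can be obstructed at each scale, summing over a geometric hierarchy gives a box count of order $\sum_j(N/\ell_j)^d(\ell_j/L_0)^{d-2}=N^dL_0^{2-d}\sum_j\ell_j^{-2}$, dominated by the smallest admitted scale. With $\ell_0\sim L_1$ this is $\sim N^{\,d-2+(d-2)^2/(d(d-1))}\gg N^{d-2}$, and even with your suggested cut-off $\ell_0\sim N^{d/(2(d-1))}$ (which moreover leaves unaddressed the bubbles living only at scales below $\ell_0$ but above $L_1$) one only reaches $O(N^{d-2})$, still a factor $\log N$ too large: the $2\ov K L_1$-neighbourhood then has volume of order $N^{d-2}L_1^d\sim N^d\log N\gg|D_N|$, and the number of shapes is $\exp\{cN^{d-2}\log N\}$. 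So ``extraction is cheap per box'' does not sum; your claimed $|C_\omega|/|B_0|=o(N^{d-2}/\log N)$ is unsupported.

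What the paper actually does to close this gap is a two-stage aggregation that your proposal only gestures at (``macro-bubbles'', ``enlargement of obstacles''). First, via the maximal $\ov B(B_1)$ in (\ref{3.13})--(\ref{3.14}) and the parents $B'(B_1)$, it covers $Bub$ by \emph{pairwise disjoint maximal} $M$-adic boxes $B'_1,\dots,B'_m$ in which both $\cU_1$ and $\cU_1^c$ occupy a non-degenerate fraction (see (\ref{3.21})--(\ref{3.23})); the maximality forces disjointness, so $\sum_j|B'_j|\le|\ov D_N|$ and the projection mass (\ref{3.24}) has bounded total. Second --- and this is the crux --- it runs the sparse/rarefied iteration of (\ref{3.29})--(\ref{3.33}) and Lemma \ref{lem3.2}, a simplified avatar of the capacity and volume estimates from the method of enlargement of obstacles, showing that the volume of $B'_j$'s sitting under towers that are sparse beyond depth $k$ decays geometrically in $k$. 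Choosing $k=k(\ve)$ as in (\ref{3.43}), the remaining ``very good'' $B'_j$ are aggregated inside at most $p\le cM^{kd}$ non-sparse boxes $\wt B_1,\dots,\wt B_p$ of depth $\le k$, a number bounded \emph{uniformly in $N$}. Extracting only from these $p$ boxes (with the attachment maps $\Phi,\Phi^i$ to keep things well-spaced) gives box count $\le p\cdot c(N/L_0)^{d-2}=O(N^{(d-2)^2/(d-1)})=o(N^{d-2}/\log N)$ and complexity $\exp\{cM^{kd}\log(N/L_0)(N/L_0)^{d-2}\}=\exp\{o(N^{d-2})\}$, which is what (iii) and (iv) require. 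The missing ideas in your write-up are therefore the disjoint maximal covering of $Bub$ and, above all, the rarefied-box recursion of Lemma \ref{lem3.2} that compresses this covering to constantly many near-macroscopic boxes before the extraction of $C_\omega$.
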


Perhaps some comments on the above conditions are helpful at this stage. Condition iii) on the ``combinatorial complexity'' of $C_\omega$ is a coarse graining control. With the help of iii) when deriving asymptotic bounds on $\IP[\cA'_N]$ in the next section, we will be able to fix the value $C_\omega = C$ of the above random set and derive bounds uniformly on $C$ in $\cS_N$. Condition i) will ensure that with $L^u$ the field of occupation times as in (\ref{1.48}), $\langle  \ov{e}_{D_0}, L^u  \rangle \ge \gamma$ for each $B_0 \subseteq C$, see (\ref{1.38}), and condition iv) that the $2 \ov{K} L_1$-thickening of $C$ has a negligible volume. This will be combined with a coarse graining of the values of $\langle \ov{e}_{B_1}, L^u \rangle$ with the help of the grid $\Sigma^0$ in (\ref{2.6}) for $B_1 \in \cC_1$ at distance at least $\ov{K} L_1$ from $C$. We will then use an exponential Chebyshev bound for this coarse grained picture of the occupation time in the spirit of Proposition 5.6 of \cite{Szni19d}. The crucial condition v) will ensure that in the constraints defining $\cA'_N$, see (\ref{2.11}), a due cost will be attached to the volume of the bubble set, at least on the main part $\cA'_N \backslash \cB_N$ of the event $\cA'_N$.

\medskip
We need some additional notation. We choose an integer $M ( \ge 4)$ solely dependent on the dimension $d$ such that
\begin{equation}\label{3.6}
M^2 / (3^d + 1) > 1 \; \mbox{(for instance the smallest such integer)}.
\end{equation}

\n
We will use in the proof of Theorem \ref{theo3.1} an ``$M$-adic decomposition'' in $\IZ^d$, where $L_1$ (attached to $B_1$-boxes) corresponds to the bottom (i.e.~smallest) scale and the top (i.e.~largest) scale corresponds to $M^{\ell_N} L_1$, where
\begin{equation}\label{3.7}
M^{\ell_N} L_1 \le N < M^{(\ell_N + 1)} L_1. 
\end{equation}
We will ``view things'' from the point of view of the top scale, and $0 \le \ell \le \ell_N$ will label the ``depth'' with respect to the top scale, setting for such $\ell$
\begin{equation}\label{3.8}
\begin{split}
\cI_\ell = & \; \mbox{the collection of $M$-adic boxes of depth $\ell$, i.e.~of boxes of the form}\\
&\; \{M^{\ell_N - \ell} L_1 \, z + [0,M^{\ell_N - \ell} L_1)^d\} \cap \IZ^d, \;\mbox{where} \; z \in \IZ^d.
\end{split}
\end{equation}
Thus, the collections $\cI_\ell$, $0 \le \ell \le \ell_N$, are naturally nested, $\cI_{\ell_N}$ corresponds to the collection of $B_1$-boxes and $\cI_0$ to boxes of approximate size $N$.

Given $\ell$ as above and $B \in \cI_\ell$, the ``tower above $B$'' stands for the collection of $B' \in \bigcup_{0 \le \ell' \le \ell} \cI_{\ell '}$ such that $B' \supseteq B$. We also denote by
\begin{equation}\label{3.9}
\begin{array}{l}
\mbox{$\ov{D}_N$ the union of boxes in $\cI_0$ that intersect $D_N$, so that}
\\
D_N = [-N,N]^d \subseteq \ov{D}_N \subseteq [-2N, 2N]^d \; \mbox{and} \; |D_N| \le |\ov{D}_N| \le 2^d \,|D_N| .
\end{array}
\end{equation}
Further, given $0 \le \ell \le \ell_N$, we set
\begin{equation}\label{3.10}
\mbox{$\ov{\cI}_\ell =$ the collection of boxes in $\cI_\ell$ that are contained in $\ov{D}_N$}.
\end{equation}

\n
We will now give a brief description of the main steps of the proof of Theorem \ref{theo3.1}. The random set $C_\omega$ will be extracted from the $B_0$-boundary $\partial_{B_0} \cU_1$ of the random set $\cU_1$ in (\ref{1.40}). We only need to consider the case when the bubble set has volume at least $\ve \,|D_N|$. We then distinguish between the (easy) case when for some $B_1$ in the bubble set, the box of top size in the tower above $B_1$ has a non-degenerate fraction of its volume occupied by $\cU^c_1$, and the case when no such $B_1$ exists. In the first case, both $\cU_1$ and $\cU^c_1$ occupy a non-degenerate fraction of the volume of $[-4N,4N]^d$. Then, the isoperimetric controls of \cite{DeusPisz96} together with Lemmas \ref{lem1.1} and \ref{lem1.2}, and the rarity of $(\alpha,\beta,\gamma)$-bad boxes on the event $\cB^c_N$, ensure a rather straightforward construction of $C_\omega$.

\medskip
In the second case, which is more delicate, we cover the bubble set by a collection of pairwise disjoint maximal $M$-adic boxes $B'_j$, $1 \le j \le m$, in which both $\cU_1$, and $\cU_1^c$ occupy a non-degenerate fraction of volume. We discard the boxes where too many $(\alpha, \beta, \gamma)$-bad $B_0$-boxes are present and may spoil the number of columns of $B_0$-boxes in the box that only contains $(\alpha, \beta, \gamma)$-good boxes. The bad boxes $B'_j$ that we discard occupy a small fraction of the total volume of the $B'_j$, $1 \le j \le m$. However, the remaining $B'_j$ may still have too high complexity for the type of coarse grained set we are aiming for. We thus use some elements of the {\it method of enlargement of obstacles}, see Chapter 4 of \cite{Szni98a}. We introduce a notion of rarefied boxes, where in each box in the tower above the given box the presence of the good $B'_j$ is sparse (i.e.~little felt by simple random walk). We show that rarefied boxes of depth bigger than $k$ (where $k$ solely depends on the dimension and $\ve$) occupy a small volume. We are then reduced to boxes of depth at most $k$ where somewhere in the (short) tower above them the good $B'_j$ are felt. Combined with Lemmas \ref{lem1.1} and \ref{lem1.2}, this permits to construct the coarse grained set $C_\omega$ satisfying (\ref{3.5}).

\bigskip\n
{\it Proof of Theorem \ref{theo3.1}:} Without loss of generality, we assume that
\begin{equation}\label{3.11}
|B u b| \ge \ve \, |D_N|
\end{equation}
(on the complement in $\cB^c_N$ of this event we simply choose $C_\omega = \phi$, so that on the complement in $\cB^c_N$ of this event (\ref{3.5}) holds). We also assume that $N$ is large enough so that (see (\ref{1.46}))
\begin{equation}\label{3.12}
|{\rm Deep} \, B_1| \ge \mbox{\f $\dis\frac{3}{4}$} \;|B_1|.
\end{equation}

\n
Given $B_1 \subseteq B u b$ (that is $B_1 \subseteq D_N$ such that ${\rm Deep} \,B_1 \cap \cU_1 = \phi$, see (\ref{1.47})), we consider the boxes $B$ in the tower above $B_1$ such that
\begin{equation}\label{3.13}
|B \cap \cU^c_1| \ge \mbox{\f $\dis\frac{1}{2}$} \;|B|,
\end{equation}
and note that due to (\ref{3.12}) and ${\rm Deep} \, B_1 \subseteq \cU^c_1$, $B_1$ belongs to this collection. We thus denote by
\begin{equation}\label{3.14}
\mbox{$\ov{B} (B_1)$ the maximal element in this collection.}
\end{equation}
Either we are on the event
\begin{equation}\label{3.15}
\mbox{for some $B_1 \subseteq B u b, \;\ov{B}(B_1) \in \cI_0$ (intersected with $\cB^c_N \cap \{|B u b| \ge \ve \, |D_N|\})$},
\end{equation}
or we are on the complement of this event in $\cB^c_N \cap \{ |B u b| \ge \ve\, |D_N|\}$.

\medskip
We first treat the easier case when (\ref{3.15}) occurs. By definition of $\cU_1$, see (\ref{1.40}), $\cU_1 \supseteq ([-3N - L_0, 3 N + L_0]^d)^c$, so that for large $N$ on the event in (\ref{3.15}) both $\cU_1$ and $\cU_1^c$ occupy a non-degenerate fraction of volume in $[-4N, 4N]^d$. By the isoperimetric controls (A.3) - (A.6), p.~480-481 of \cite{DeusPisz96}, there is a projection $\pi$ on one of the coordinate hyperplanes such that $\pi (\partial \cU_1 \cap [-4N,4N]^d) \ge c\,N^{d-1}$, and hence at least $c \,(\frac{N}{L_0})^{d-1}$ $B_0$-boxes in $\partial_{B_0} \, \cU_1 \cap [-4N, 4N]^d$ having distinct $\pi$-projection (see below (\ref{1.41}) for notation).

\medskip
By definition of $\cB_N$ in (\ref{1.42}), for large $N$ on the event (\ref{3.15}), since $(\frac{N}{L_0})^{d-1} \sim N^{d-2} \gg \rho(L_0) \,N^{d-2}$ as $N \r \infty$, we can find $[c'(\frac{N}{L_0})^{d-1}] \,B_0$-boxes with distinct $\pi$-projections in $[-4N, 4N]^d$ that are all $(\alpha, \beta, \gamma)$-good and in $\partial_{B_0} \,\cU_1$, and hence such that $N_u(D_0) \ge \beta \,{\rm cap} (D_0)$.

\medskip
By the combination of Lemmas \ref{lem1.1} and \ref{lem1.2} we can extract a subcollection of these $B_0$-boxes such that their $\pi$-projections are at mutual distance at least $\ov{K} L_0$, the capacity of their union at least $\wh{c} \,N^{d-2}$, and their number $[\ov{c} (\frac{N}{L_0})^{d-2}]$. We denote by $C_\omega$ the union of these $B_0$-boxes (we use some deterministic ordering to select $C_\omega$ if there are several such collections). Then, for large $N$, we see that
\begin{equation}\label{3.16}
\mbox{the $2 \ov{K} L_1$-neighborhood of $C_\omega$ has volume at most $c \ov{K} L_1 \, N^{d-1} \le \ve\, |D_N|$,}
\end{equation}
and since ${\rm cap}(C_\omega) \ge \wh{c} \,N^{d-2}$ it also follows that
\begin{equation}\label{3.17}
h_{C_\omega} (x) \ge \wt{c} \;\; \mbox{on} \; \;[-4N, 4N]^d \supseteq B u b.
\end{equation}
In addition, the number of possible shapes for the random set $C_\omega$ constructed on the event in (\ref{3.15}) is at most
\begin{equation}\label{3.18}
\big\{ c \big(\mbox{\f $\dis\frac{N}{L_0}$} \big)^d \big\}^{\ov{c}(\frac{N}{L_0})^{d-2}} = \exp\big\{ \ov{c} \log \big(c \mbox{\f $\dis\frac{N}{L_0}$} \big) \big(\mbox{\f $\dis\frac{N}{L_0}$} \big)^{d-2}\big\} = \exp \{o(N^{d-2})\}.
\end{equation}
This shows that for large $N$ the above constructed $C_\omega$ on the event in (\ref{3.15}) satisfies all conditions of (\ref{3.5}).

\medskip
We now turn to the more delicate task of constructing $C_\omega$ on the complement in $\cB^c_N \cap \{| B u b| \ge \ve \, |D_N|\}$ of the event in (\ref{3.15}). We thus assume that none of the $\ov{B}(B_1)$, with $B_1 \subseteq B u b$ has maximal size, that is, we consider the event
\begin{equation}\label{3.19}
\mbox{for all $B_1 \subseteq B u b, \ov{B}(B_1) \in \bigcup\limits_{1 \le \ell \le \ell_N} \,\ov{\cI}_\ell$ (intersected with $\cB^c_N \cap \{|B u b | \ge \ve\, |D_N|\}$)}.
\end{equation}
Then, for each $B_1 \subseteq B u b$, we define $B'(B_1)$ the box immediately above $\ov{B}(B_1)$ in the tower above $B_1$, so that on the event in (\ref{3.19})
\begin{equation}\label{3.20}
\mbox{for all $B_1 \subseteq B u b, \; B_1 \subseteq \ov{B}(B_1) \varsubsetneq B'(B_1)$ with $B'(B_1) \in \bigcup\limits_{0 \le \ell  < \ell_N} \,\ov{\cI}_\ell$}.
\end{equation}
Thus, by (\ref{3.13}), (\ref{3.14}), since $B'(B_1)$ does not satisfy (\ref{3.13}), we have (see (\ref{3.8}))
\begin{equation*}
\dis\frac{1}{2M^d} \;|B'(B_1)| \le \mbox{\f $\dis\frac{1}{2}$} \;|\ov{B}(B_1)| \le |\ov{B}(B_1) \cap \cU_1^c| \le |B'(B_1) \cap \cU^c_1| \le \mbox{\f $\dis\frac{1}{2}$} \;|B'(B_1)|.
\end{equation*}
Hence, on the event in (\ref{3.19}) we find that
\begin{equation}\label{3.21}
\mbox{for all $B_1 \subseteq B u b, \; \dis\frac{1}{2M^d} \;|B'(B_1)| \le |B'(B_1) \cap \cU^c_1| \le \mbox{\f $\dis\frac{1}{2}$} \;|B'(B_1)|$}.
\end{equation}
By construction, any two sets $B'(B_1)$, with $B_1 \subseteq B u b$, are either pairwise disjoint, or one contains the other. We then denote by
\begin{equation}\label{3.22}
\mbox{$B'_1, \dots, B'_m$ the maximal elements for inclusion in the collection of $B'(B_1)$, $B_1 \subseteq B u b$}
\end{equation}
(both $m$ and the labelling possibly depend on $\omega$ in the event in (\ref{3.19})). Thus, on the event in (\ref{3.19}) we find that:
\begin{equation}\label{3.23}
\mbox{the $B'_j$, $1 \le j \le m$, are pairwise disjoint and $B u b \subseteq \bigcup\limits^m_{j = 1} \,B'_j \subseteq \ov{D}_N$}.
\end{equation}

By (\ref{3.21}) both $\cU_1$ and $\cU^c_1$ occupy a non-vanishing fraction of volume in each $B'_j$, $1 \le j \le m$. By the isoperimetric controls (A.3) - (A.6), p.~480-481 of \cite{DeusPisz96}, for each $B'_j$, $1 \le j \le m$, we can find a projection $\pi'_j$ on one of the coordinate hyperplanes so that (recall that $M$ is a dimension dependent constant)
\begin{equation}\label{3.24}
\begin{array}{l}
\mbox{there are $c_4 \Big(\frac{|B'_j|}{|B_0|}\Big)^{\frac{d-1}{d}}$ columns in the $\pi'_j$-direction inside $B'_j$ that}
\\
\mbox{contain a box of $\partial_{B_0} \,\cU_1$.}
\end{array}
\end{equation}
Now for $1 \le j \le m$, we say that
\begin{equation}\label{3.25}
\begin{array}{l}
\mbox{``$j$ is bad'' if $B'_j$ contains more than $\frac{1}{2} \;c_4 \Big(\frac{|B'_1|}{|B_0|}\Big)^{\frac{d-1}{d}} (\alpha,\beta,\gamma)$-bad $B_0$-boxes and that}
\\
\mbox{``$j$ is good'' otherwise}.
\end{array}
\end{equation}
We write $\cG = \{1 \le j \le m$; $j$ is good$\}$ for the set of good $j$ in $\{1,\dots,m\}$, and we set
\begin{equation}\label{3.26}
a'_j = |B'_j| / \dsl^m_{k=1} \;|B'_k|, \; \mbox{for $1 \le j \le m$}.
\end{equation}
On the event in (\ref{3.19}) (which is contained in $\cB^c_N$, see (\ref{1.42}) for its definition), we have
\begin{equation*}
\begin{array}{l}
\mbox{\f $\dis\frac{1}{2}$} \; c_4 \,|B u b |^{\frac{d-1}{d}} \dsl_{j \, {\rm bad}} a'\,^{\frac{d-1}{d}}_j  \; \stackrel{(\ref{3.23}), (\ref{3.26})}{\le} \mbox{\f $\dis\frac{1}{2}$} \; c_4 \;\dsl_{j \, {\rm bad}} \;|B'_j|^{\frac{d-1}{d}} \stackrel{(\ref{3.25})}{\le} 
\\
\dsl_{j \, {\rm bad}} \; \dsl_{B_0 \subseteq B'_j} \;|B_0|^{\frac{d-1}{d}} \;\mbox{$1\{B_0$ is $(\alpha, \beta, \gamma)$-bad$\}$} \le \mbox{(since $B'_j \subseteq [-2N, 2N]^d$, see  (\ref{3.20}))} 
\\
L_0^{d-1} \;\dsl_{B_0 \subseteq [-2N,2N]^d} \; \mbox{$1\{B_0$ is $(\alpha,\beta,\gamma)$-bad$\}$} \stackrel{(\ref{1.42})}{\le} L^{d-1}_0 \l \rho(L_0) \,N^{d-2}.
\end{array}
\end{equation*}

As a result we see that on the event in (\ref{3.19})
\begin{equation}\label{3.27}
\begin{array}{l}
\big(\dsl_{j \, {\rm bad}} a'_j\big)^{\frac{d-1}{d}}  \le \dsl_{j \, {\rm bad}}  a'_j\,\!\!^{\frac{d-1}{d}}    \le \mbox{\f $\dis\frac{2}{c_4}$} \; L^{d-1}_0 \; N^{d-2} \,\rho(L_0) \, |B u b |^{-\frac{d-1}{d}}
\\[2ex]
\stackrel{(\ref{1.7}),(\ref{3.11})}{\le} \mbox{\f $\dis\frac{2}{c_4}$}  \;\rho(L_0) \;\Big(\dis\frac{N^d}{\ve\, |D_N|}\Big)^{\frac{d-1}{d}} \underset{N}{\longrightarrow} 0.
\end{array}
\end{equation}

\n
Our goal is to construct a coarse grained $C_\omega$ of low complexity, see (\ref{3.5}) iii), and from this perspective the description of the $B'_j, j \in \cG$, may still involve too many small grains. We will now aggregate the most part of the $B'_j, j \in \cG$, inside large (nearly macroscopic) boxes that will feel the presence of the $B'_j, j \in \cG$, which they contain, and hence, see (\ref{3.24}) - (\ref{3.25}), the presence of $(\alpha, \beta, \gamma)$-good boxes $B_0$ such that $N_u(D_0) \ge \beta \,{\rm cap}(D_0)$. This step will have some flavor of the ``method of enlargement of obstacles'', see Chapter 4 of \cite{Szni98a}, although in a simplified form.

\medskip
We introduce the dimension dependent constant (recall $c_*$ from (\ref{1.3})):
\begin{equation}\label{3.28}
\eta = (c_* \,M^d)^{-1} \wedge \Big\{\inf\limits_{L \ge 1} \; \dis\frac{{\rm cap}([0,L)^d)}{L^{d-2}}\Big\} > 0.
\end{equation}
We then say that an $M$-adic box $B$ in $\bigcup_{0 \le \ell \le \ell_N} \,\ov{\cI}_\ell$ (see (\ref{3.10})) is {\it sparse} if 
\begin{equation}\label{3.29}
{\rm cap}\big(B \cap \big(\bigcup\limits_{j\in \cG} \,B'_j\big)\big) < \eta \, |B|^{\frac{d-2}{d}},
\end{equation}
and otherwise {\it non-sparse}. Note that each $B = B'_j, j \in \cG$ satisfies ${\rm cap}(B'_j) \ge \eta \,|B'_j|^{\frac{d-2}{d}}$, so that
\begin{equation}\label{3.30}
\mbox{$B'_j$ is non-sparse for each $j \in \cG$.}
\end{equation}
Then, for each $j \in \cG$, we consider the tower of $M$-adic boxes above $B'_j$ and set 
\begin{equation}\label{3.31}
\mbox{$\wt{B}(B'_j) =$ the largest non-sparse box in the tower above $B'_j$.}
\end{equation}
Again, by construction (see (\ref{3.8})), we find that
\begin{equation}\label{3.32}
\mbox{the boxes $\wt{B}(B'_j)$, as $j$ varies over $\cG$, are either pairwise disjoint or equal.}
\end{equation}
We then say that
\begin{equation}\label{3.33}
\begin{array}{l}
\mbox{an $M$-adic box $B$ in $\bigcup\limits_{0 \le \ell\le \ell_N} \; \cI_\ell$ is {\it rarefied} if $B$ and the boxes in the}
\\
\mbox{tower above $B$ are sparse (see (\ref{3.29}))}.
\end{array}
\end{equation}
Note that as a consequence of (\ref{3.30})
\begin{equation}\label{3.34}
\mbox{when $B$ is rarefied and $B'_j \cap B \not= \phi$ for some $j \in \cG$, then $B'_j  \varsubsetneq B$.}
\end{equation}
Our goal in Lemma \ref{lem3.2} below is to show that the volume of good boxes $B'_j, j \in \cG$, contained in the union of rarefied boxes of depth $k$ decreases geometrically in $k$. The proof has some flavor (although in a somewhat simplified version) of the capacity and volume estimates in the {\it method of enlargement of obstacles} (see Chapter 4 \S 3 of \cite{Szni98a}). We recall the notation (\ref{3.10}).

\begin{lemma}\label{lem3.2}
For all $0 \le k \le \ell_N$, 
\begin{equation}\label{3.35}
\dsl_{B\in \ov{\cI}_k, {\rm rarefied}} \big| B \cap \big(\bigcup\limits_{j \in \cG} \,B'_j\big)\big| \le c_5 \,|D_N| \big(\dis\frac{M^2}{3^d + 1}\big)^{-k}.
\end{equation}
\end{lemma}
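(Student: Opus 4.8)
The plan is to argue by induction on the depth $k$, starting from $k=0$ where the bound is trivial since the left member is at most $|\ov D_N| \le 2^d |D_N|$ and the factor $(M^2/(3^d+1))^0 = 1$, so we may take $c_5 \ge 2^d$. For the inductive step, fix $k$ with $1 \le k \le \ell_N$ and a rarefied box $B \in \ov{\cI}_{k-1}$ (if $B \in \ov{\cI}_k$ is rarefied, then by (\ref{3.33}) the box $B^\sharp \in \cI_{k-1}$ immediately above $B$ in the tower is also rarefied, since the tower above $B^\sharp$ is contained in the tower above $B$ together with $B$ itself, hence sparse). So I will bound, for each rarefied $B^\sharp \in \ov{\cI}_{k-1}$, the quantity $\sum_{B \subseteq B^\sharp,\, B \in \cI_k,\, B\ \mathrm{rarefied}} |B \cap (\bigcup_{j\in\cG} B'_j)|$ by a fixed fraction of $|B^\sharp \cap (\bigcup_{j\in\cG} B'_j)|$; summing over rarefied $B^\sharp \in \ov{\cI}_{k-1}$ and invoking the induction hypothesis at depth $k-1$ will then close the induction.

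The key estimate is thus: for a rarefied box $B^\sharp$ of depth $k-1$, the volume of $\bigcup_{j\in\cG} B'_j$ inside the $3^d$ children of depth $k$ that are themselves rarefied is at most a $(3^d+1)/M^2$-fraction of the volume of $\bigcup_{j\in\cG} B'_j$ inside $B^\sharp$. I would split $B^\sharp \cap (\bigcup_{j\in\cG} B'_j)$ according to the $M^d$ children $B$ of depth $k$, and within each child distinguish rarefied children from non-rarefied ones. For a non-sparse (hence possibly non-rarefied) child $B$ one has, by definition (\ref{3.29}) applied to $B$ as a witness, ${\rm cap}(B \cap (\bigcup_{j\in\cG} B'_j)) \ge \eta |B|^{(d-2)/d}$. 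Comparing with the capacity of the parent: since $B^\sharp$ is sparse (it is rarefied), ${\rm cap}(B^\sharp \cap (\bigcup_{j\in\cG} B'_j)) < \eta |B^\sharp|^{(d-2)/d} = \eta M^{d-2}|B|^{(d-2)/d}$. On the other hand, each child's contribution to capacity is bounded below using (\ref{3.28}): via the bound $g(x,y) \le c_* |y-x|^{2-d}$ one controls the Green energy within a single child of linear size $L$ by $c_* L^{2-d}$ times the square of the measure's mass, giving $\mathrm{cap}$ of the union over children at least roughly the sum of the children's capacities divided by a bounded overlap factor, quantified precisely by the choice $\eta \le (c_* M^d)^{-1}$. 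Carefully done, this shows the number of non-sparse children is at most $c\, M^2$ for a suitable constant, and more usefully that the rarefied children carry at most a $(3^d+1)/M^2$-proportion of the parent's mass once we note $3^d$ of the $M^d$ children can be non-rarefied-but-relevant while the rest must be rarefied; the factor $3^d+1$ appears by absorbing boundary/overlap terms and the single ``diagonal'' self-energy term from (\ref{1.16})-type estimates.

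The main obstacle will be making the capacity bookkeeping in the previous paragraph fully rigorous: one must bound $\mathrm{cap}(B^\sharp \cap \bigcup B'_j)$ from below in terms of the capacities of its intersections with the rarefied children, using that capacity is subadditive but also, crucially, that the Green-function coupling between distinct children at the same scale is controlled — this is exactly where the constant $c_*$ from (\ref{1.3}) and the definition (\ref{3.28}) of $\eta$ enter, and it is the analogue of the capacity-versus-volume dichotomy from the method of enlargement of obstacles (Chapter 4 \S 3 of \cite{Szni98a}). Concretely I expect to use the variational lower bound $\mathrm{cap}(A) \ge \langle \rho\otimes\rho, g\rangle^{-1}$ with $\rho$ the normalized restriction to $A$ of a suitable combination of the children's equilibrium measures, exactly as in the proof of Lemma \ref{lem1.1}, and the self-energy term contributes the ``$+1$'' while the $3^d$ nearest-neighbor children at scale $M^{\ell_N-k}L_1$ contribute the ``$3^d$'' in $3^d+1$. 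Once the one-step contraction with factor $(3^d+1)/M^2 < 1$ (by (\ref{3.6})) is established, the geometric decay (\ref{3.35}) follows immediately by iterating from $k=0$, and $c_5$ can be taken to be $2^d$ times a constant absorbing the finitely many corrections.
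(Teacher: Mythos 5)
There is a genuine gap here: the one-step volume contraction that your induction hinges on is false. You claim that for a rarefied $B^\sharp\in\ov{\cI}_{k-1}$, the good volume $\sum_{B\subseteq B^\sharp,\,B\in\cI_k,\,B\ \mathrm{rarefied}}|B\cap\bigcup_{j\in\cG}B'_j|$ is at most a $(3^d+1)/M^2$-fraction of $|B^\sharp\cap\bigcup_{j\in\cG}B'_j|$. But consider a rarefied $B^\sharp$ whose entire good content consists of one small box $B'_j$ contained in a single child $\hat{B}$ of depth $k$. Since $B^\sharp$ is rarefied, every box above $\hat{B}$ is sparse, and $\hat{B}$ itself is sparse as long as $\mathrm{cap}(B'_j)<\eta|\hat{B}|^{(d-2)/d}$ (perfectly consistent with $B'_j$ being non-sparse and much smaller than $\hat{B}$). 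So every child is rarefied, and the rarefied children carry \emph{all} of $|B^\sharp\cap\bigcup B'_j|$; the ratio is $1$, not $(3^d+1)/M^2$. Your secondary claim that the number of non-sparse children is at most $cM^2$ does not rescue this — in the scenario above there are \emph{zero} non-sparse children, which is the worst case for a volume contraction — and in any event it is not proved (monotonicity of capacity only gives an upper bound on each child's capacity, and exact superadditivity fails for adjacent children; the near-superadditivity in the paper is precisely the content of the inequality (\ref{3.38}), which applies only to sparse children).

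The mechanism in the paper is structurally different and it is important to see why no per-level volume recursion is available. One converts volume to capacity exactly once, at the terminal depth $k$, via (\ref{3.36}): $|A|\le c_6\,\mathrm{cap}(A)\,|B|^{2/d}$. The conversion factor $|B|^{2/d}$ \emph{depends on the depth} and shrinks by $M^2$ per level — this, not a volume contraction, is the source of the $M^{-2}$ in the geometric rate. After this single conversion one iterates the capacity inequality (\ref{3.38}) (equivalently, in unnormalized form (\ref{3.41}): $\mathrm{cap}(A_{\ov{B}})\ge(3^d+1)^{-1}\sum_{\hat{B}\ \mathrm{sparse}}\mathrm{cap}(A_{\hat{B}})$) from depth $k$ up to depth $0$, where $3^d+1$ comes from bounding $G\nu\le 3^d+1$ on the support of $\nu=\sum_{\hat{B}\ \mathrm{sparse}}e_{\hat{B}\cap\bigcup B'_j}$ using $\eta\le(c_*M^d)^{-1}$ for the far children and a crude $3^d$ for the near ones. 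At depth $0$ the normalized capacity is bounded by a constant since $|\ov{\cI}_0|$ is bounded. Combining yields $(3^d+1)^k\cdot M^{-2k}$, i.e.\ (\ref{3.35}). Because the $M^{-2}$ per-level gain lives in the volume-to-capacity factor and the $(3^d+1)^{-1}$ per-level cost lives in the capacity recursion, the two cannot be collapsed into a single volume recursion, which is exactly where your plan breaks down.
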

\begin{proof}
We recall the Green operator $G$ from (\ref{1.2}). Note that when $B$ is a box and $A \subseteq B$, then $G 1_A \le G 1_B \le c \,|B|^{\frac{2}{d}}$, see (\ref{1.5}), so that $G 1_A \le c \,|B|^{\frac{2}{d}} h_A$ on $A$. Integrating this last inequality, with respect to $e_A$, see above (\ref{1.4}), we find that
\begin{equation}\label{3.36}
c_6 \; \dis\frac{{\rm cap}(A)}{|B|^{\frac{d-2}{d}}} \ge \dis\frac{|A|}{|B|}, \;\mbox{for all $A \subseteq B$, with $B$ a box in $\IZ^d$.}
\end{equation}
We will now bound the volume in the left member of (\ref{3.35}) in terms of its capacity with the help of the above inequality, see (\ref{3.37}) below. The case $k = 0$ in (\ref{3.35}) being immediate to handle (the left member is at most $|\ov{D}_N| \le 2^d \, |D_N|$, see (\ref{3.9})), we assume that $1 \le k \le \ell_N$. We note that for each $B \in \ov{\cI}_k$, we have $|B| \le M^{-kd} |\ov{D}_N|$, and hence
\begin{equation*}
\dsl_{B \in \ov{\cI}_k,{\rm rarefied}} \big|B \cap \big(\bigcup\limits_{j \in \cG}\, B'_j\big)\big| \le \dis\frac{|\ov{D}_N|}{M^{kd}} \; \dsl_{B\, \in \ov{\cI}_k,{\rm rarefied}} \; |B \cap \big(\bigcup\limits_{j \in \cG} B'_j\big)\big| / |B|.\;.
\end{equation*}
Thus, using (\ref{3.36}) with $A = B \cap (\bigcup_{j \in \cG} \,B'_j)$, we find that for $1 \le k \le \ell_N$:
\begin{equation}\label{3.37}
\dsl_{B \in \ov{\cI}_k,{\rm rarefied}} \big|B \cap \big(\bigcup\limits_{j \in \cG}\, B'_j\big)\big| \le c_6 \, \dis\frac{|\ov{D}_N|}{M^{kd}} \; \dsl_{B \in \ov{\cI}_k,{\rm rarefied}} \dis\frac{{\rm cap}(B \cap (\bigcup_{j \in \cG}\, B'_j))|}{|B|^{\frac{d-2}{d}}} \;.
\end{equation}

\n
We will now establish an induction over scales in order to control the sum in the right member of (\ref{3.37}). For this purpose we consider $0 \le \ell < \ell_N$ and some $\ov{B} \in \ov{\cI}_\ell$ and the boxes $\wh{B} \in \ov{\cI}_{\ell + 1}$ contained in $\ov{B}$. The bound in (\ref{3.38}) below has a similar flavor (but is simpler, because we do not need truncation due to the more restrictive notion of rarefied boxes that we use here) to Lemma 3.2 on p.~170 in Chapter 4 \S 3 of \cite{Szni98a}. Our aim is to show that

\begin{equation}\label{3.38}
 \dis\frac{{\rm cap}(\ov{B} \cap (\bigcup_{j \in \cG}\, B'_j))}{|\ov{B}|^{\frac{d-2}{d}}} \ge \dis\frac{M^2}{3^d + 1} \; \dis\frac{1}{M^d} \; \dsl_{\wh{B} \subseteq \ov{B}, \wh{B} \, {\rm sparse}}  \; \dis\frac{{\rm cap}(\wh{B} \cap (\bigcup_{j \in \cG}\, B'_j))}{|{\wh{B}}|^{\frac{d-2}{d}}}.
\end{equation}
This inequality reflects a nearly additive regime of the capacity (up to the multiplicative factor $1/(3^d+ 1)$) when dealing with sparse subboxes of a box of the next scale. We will then iterate this basic control over scales corresponding to $\ell$ ranging from $k-1$ to $0$, see (\ref{3.42}) below. For the time being we prove (\ref{3.38}). To this end we introduce the measure (see below (\ref{1.3}) for notation)
\begin{equation*}
\nu = \dsl_{\wh{B} \subseteq \ov{B}, \wh{B}\,{\rm sparse}} \; e_{\wh{B} \cap (\bigcup\limits_{j \in \cG} \,B'_j)},
\end{equation*}
and note that for each $x \in \bigcup_{\wh{B} \subseteq \ov{B}, \wh{B}\,{\rm sparse}} \wh{B} \cap (\bigcup_{j \in \cG} \,B'_j)$, denoting by $\Sigma_1$ and $\Sigma_2$ the respective sums over the sparse $\wh{B} \subseteq \ov{B}$, with $\wh{B}$ containing $x$, or being a neighbor of the box in $\ov{\cI}_{\ell + 1}$, containing $x$ for $\Sigma_1$, and for $\Sigma_2$ the sum over the remaining sparse $\wh{B} \subseteq \ov{B}$, we have
\begin{equation}\label{3.39}
\begin{split}
\dsl_y \;g(x,y) \,\nu(y) & \le \underset{\!\!\!\wh{B}}{\Sigma_1} \; \dsl_{y \in \wh{B}} g(x,y) \;e_{\wh{B} \cap (\bigcup_{j \in \cG} B'_j)} (y)
\\[1ex]
&+ \; \underset{\!\!\!\wh{B}}{\Sigma_2} \;  \dsl_{y \in \wh{B}} g(x,y) \; e_{\wh{B} \cap (\bigcup_{j \in \cG} B'_j)} (y)
\\
&\!\!\!\! \stackrel{(\ref{1.3})}{\le} 3^d + \underset{\!\!\!\!\wh{B}}{\Sigma_2} \; \dsl_{y \in \wh{B}} \; \dis\frac{c_*}{|\wh{B}|^{\frac{d-2}{d}}} \;{\rm cap}\big(\wh{B} \cap (\bigcup_{j \in \cG}\,B'_j \big)\big)
\\
&\mbox{and since the $\wh{B}$ in the sum $\Sigma_2$ are sparse, see (\ref{3.29}),}
\\[-1ex]
&\mbox{and there are at most $M^d$ such $\wh{B}$} 
\\
&  \le 3^d + c_* \, \eta \,M^d \stackrel{(\ref{3.28})}{\le}  3^d + 1.
\end{split}
\end{equation}
Noting that $\nu$ is supported by the set $S = \bigcup_{\wh{B} \subseteq \ov{B}, \wh{B}\,{\rm sparse}} \,\wh{B} \cap (\bigcup_{j \in \cG} \,B'_j)$, we find that
\begin{equation}\label{3.40}
(3^d + 1)^{-1} \,G \nu \le h_S,
\end{equation}
and integrating this inequality with respect to $e_S$, we find that
\begin{equation}\label{3.41}
\begin{split}
{\rm cap}\big(\ov{B} \cap \big(\bigcup_{j \in \cG} \,B'_j\big)\big) & \ge {\rm cap} (S)
\\[-2ex]
& \ge \dis\frac{1}{3^d + 1} \; \nu(\IZ^d) = \dis\frac{1}{3^d + 1} \; \dsl_{\wh{B} \subseteq \ov{B}, \wh{B}\,{\rm sparse}} \; {\rm cap} \big(\wh{B} \cap \big(\bigcup\limits_{j \in \cG} \,B'_j\big)\big).
\end{split}
\end{equation}
Since $|\ov{B}| = M^d \,|\wh{B}|$, dividing both members of (\ref{3.41}) by $|\ov{B}|^{\frac{d-2}{d}}$ the inequality (\ref{3.38}) follows.

\medskip
We will now apply (\ref{3.38}) inductively to bound the right member of (\ref{3.37}). We thus find that
\begin{equation}\label{3.42}
\begin{array}{l}
c_6 \; \dis\frac{|\ov{D}_N|}{M^{dk}} \; \dsl_{B \in \ov{\cI}_k,{\rm rarefied}} \;\dis\frac{{\rm cap}(B \cap (\bigcup_{j \in \cG} B'_j))}{|B|^{\frac{d-2}{d}}} \; \stackrel{(\ref{3.38})}{\le}
\\
\\
c_6 \; \dis\frac{|\ov{D}_N|}{M^{d(k-1)}} \; \Big(\dis\frac{M^2}{3^d + 1}\Big)^{-1} \; \dsl_{B \in \ov{\cI}_{k-1},{\rm rarefied}} \; \dis\frac{{\rm cap}(B \cap (\bigcup_{j \in \cG} B'_j))}{|B|^{\frac{d-2}{d}}} \; \stackrel{\rm induction}{\le}
\\
\\
c_6 \; | \ov{D}_N | \;  \Big(\dis\frac{M^2}{3^d + 1}\Big)^{-k}  \; \dsl_{B \in \ov{\cI}_0,{\rm rarefied}} \;\dis\frac{{\rm cap}(B)}{|B|^{\frac{d-2}{d}}} \le c_5 \,|D_N| \;  \Big(\dis\frac{M^2}{3^d + 1}\Big)^{-k}.
\end{array}
\end{equation}
This inequality combined with (\ref{3.37}) completes the proof of Lemma 3.2.
\end{proof}

As an aside, the notion of rarefied box that we use here (where every box in the tower above a given box is sparse) is more primitive than the notion used in Chapter 4 \S 3 of \cite{Szni98a}. However, this feature permits the use of (\ref{3.38}) that does not require truncation in the right member, and is simpler to iterate than the inequality in Lemma 3.2 on p.~170 of \cite{Szni98a}, see also Lemma 3.4 on p.~173 and (3.38) on p.~175 of the same reference.

\medskip
We now specify our choice of $k(\ve)$ (we recall that $M$ is a dimension dependent constant, see (\ref{3.6})) through
\begin{equation}\label{3.43}
c_5 \;\Big( \dis\frac{M^2}{3^d + 1}\Big)^{-k} \le \mbox{\f $\dis\frac{1}{2}$} \; \ve.
\end{equation}
Thus, when $N$ is large enough so that $\ell_N \ge k$, see (\ref{3.7}), on the event in (\ref{3.19}) we have by (\ref{3.35}) and (\ref{3.43})
\begin{equation}\label{3.44}
\dsl_{B \in \ov{\cI}_k,{\rm rarefied}} \; \big|B \cap \big(\bigcup\limits_{j \in \cG} \,B'_j\big) \big|\le \mbox{\f $\dis\frac{\ve}{2}$} \; |D_N|.
\end{equation}
We now introduce the set of {\it very good} $j$ (recall the definition of $\cG$ below (\ref{3.25})):
\begin{equation}\label{3.45}
V \cG = \big\{j \in \cG; \; \wt{B}(B'_j) \in \bigcup\limits_{0 \le \ell \le k} \;\ov{\cI}_\ell\big\},
\end{equation}
in other words, $j$ is very good if it is good and some box of depth at most $k$ in the tower above $B'_j$ is non-sparse.

\medskip
As we now explain, the volume of the $B'_j$ that are good but not very good is small. Indeed, when $\ell_N \le k$, $\cG = V \cG$ by (\ref{3.30}), and when $\ell_N > k$, one has
\begin{equation}\label{3.46}
\dsl_{j \in \cG \backslash V \cG} |B'_j| \le \dsl_{B \in \ov{\cI}_k,{\rm rarefied}} \big| B \cap \big(\bigcup\limits_{j \in \cG} B'_j\big)\big| \stackrel{(\ref{3.44})}{\le} \mbox{\f $\dis\frac{\ve}{2}$} \; |D_N|.
\end{equation}
In addition, by (\ref{3.26}), (\ref{3.27}), we have
\begin{equation}\label{3.47}
\dsl_{j \notin \cG} |B'_j| \le \big(\dsl_{j=1}^m  |B'_j|\big) \; \mbox{\f $\dis\frac{c'}{\ve}$} \; \rho (L_0)^{\frac{d}{d-1}} \stackrel{(\ref{3.23})}{\le}  \mbox{\f $\dis\frac{c}{\ve}$} \; |D_N| \;\rho(L_0)^{\frac{d}{d-1}}.
\end{equation}
As a result, for large $N$ on the event in (\ref{3.19}), we have
\begin{align}
& \bigcup\limits_{j=1}^m \;B'_j \supseteq B u b, \; \mbox{and} \label{3.48}
\\
& \dsl_{j \notin V\cG} \;|B'_j| \le \mbox{\f $\dis\frac{\ve}{2}$} \; |D_N| + \mbox{\f $\dis\frac{c}{\ve}$} \;|D_N| \,\rho (L_0)^{\frac{d}{d-1}}. \label{3.49}
\end{align}
Note that (see (\ref{3.31}) for notation) the $\wt{B}(B'_j)$ for $j \in V \cG$ belong to $\bigcup_{0 \le \ell \le k} \ov{\cI}_\ell$, and are either pairwise disjoint or equal. Using a total  order on $\bigcup_{0 \le \ell \le k}  \ov{\cI}_\ell$ preserving depth we can label the $\wt{B}(B'_j), j \in V \cG$, as $\wt{B}_1,\dots , \wt{B}_p$, so that for large $N$ on the event in (\ref{3.19}) we have
\begin{equation}\label{3.50}
\mbox{$\wt{B}_1,\dots , \wt{B}_p$ are pairwise disjoint boxes in $\bigcup_{0 \le \ell \le k} \ov{\cI}_\ell$ covering $\bigcup_{j \in V \cG} B'_j$,}
\end{equation}
and setting $\wt{L}_i = |\wt{B}_i |^{\frac{1}{d}}$, for $1 \le i \le p$,
\begin{align}
&\wt{L}_1 \ge \wt{L}_2 \ge \dots \ge \wt{L}_p, \label{3.51}
\\
& \mbox{for $j \in \cG$ and $1 \le i \le p$ when $B'_j \cap \wt{B}_i \not= \phi$, then  $j \in V \cG$ and $B'_j \subseteq \wt{B}_i$ (due to (\ref{3.45}))}, \label{3.52}
\\
& {\rm cap} \big(\wt{B}_i \cap \big(\bigcup\limits_{j \in \cG} B'_j\big)\big) \ge \eta \, |\wt{B}_i |^{\frac{d-2}{d}}, \; \mbox{for $1 \le i \le p$ (due to (\ref{3.31}))}. \label{3.53}
\end{align}
In essence, the construction of the random set $C_\omega$ on the event in (\ref{3.19}) will proceed as follows. From the above collection $\wt{B}_i$, $1 \le i \le p$, we will retain a sizeable sub-collection of non-adjacent boxes and in such boxes we will retain a sizeable sub-collection of good boxes $B'_j$. Then, using Lemma \ref{lem1.1}, each such box $B'_j$ for a suitable projection $\pi'_j$ will contain a number of order $(\frac{|B'_j|}{|B_0|})^{\frac{d-1}{d}}$ of $(\alpha, \beta, \gamma)$-good boxes $B_0$ with $N_u (D_0) \ge \beta \, {\rm cap}(D_0)$  with $\pi'_j$ projections at mutual distance at least $\ov{K} L_0$ and union having a capacity comparable to that of $B'_j$. With Lemma \ref{lem3.2} from each such retained $\wt{B}_i$ we will select among these $B_0$-boxes a number of order $(\frac{|\wt{B}_i|}{|B_0|})^{\frac{d-2}{d}}$ of boxes with union having a capacity comparable to $\wt{B}_i$. The set $C_\omega$ will in essence correspond to the union of these $B_0$-boxes.

\bigskip
More precisely, for large $N$, on the event (\ref{3.19}), given the boxes $\wt{B}_1,\dots, \wt{B}_p$ with respective sizes $\wt{L}_1 \ge \dots \ge \wt{L}_p$ we construct an attachment map $\Phi$: $\{1,\dots, p\} \rightarrow \{1, \dots , p\}$ as follows. The set $\Phi^{-1}(1)$ consists of the labels $i$ such that $\wt{B}_i$ is contained in the $\wt{L}_1$-neighborhood for the sup-distance of $\wt{B}_1$, $\Phi^{-1} (2)$ is empty if $2 \in \Phi^{-1} (1)$, and otherwise consists of the labels $i \in \{1,\dots, p\} \backslash \Phi^{-1}(1)$ such that $\wt{B}_i$ is contained in the $\wt{L}_2$-neighborhood of $\wt{B}_2$, $\Phi^{-1}(3)$ is empty if $3 \in \Phi^{-1}(1) \cup \Phi^{-1}(2)$, and otherwise consists of the $i \in \{1,\dots, p\} \backslash (\Phi^{-1} (1) \cup \Phi^{-1}(2)$) such that $\wt{B}_i$ is contained in the $\wt{L}_3$-neighborhood of $\wt{B}_3$, and so on until the process terminates.

\medskip
In this fashion we can make sure that
\begin{align}
&\Phi \circ \Phi = \Phi, \label{3.54}
\intertext{and when $i \in {\rm range} \, \Phi$, then for $i' \in \{1, \dots, p\} \backslash \{i\}$}
& \left\{ \begin{array}{rl}
{\rm a)} & \mbox{$\Phi(i') = i$ implies that $\wt{B}_{i'}$ is contained in the $\wt{L}_i$-neighborhood of $\wt{B}_i$}, \label{3.55}
\\[1ex]
{\rm b)} & \mbox{$i' \in {\rm range}\, \Phi$ implies that $d_\infty(\wt{B}_i, \wt{B}_{i'}) \ge \max \{\wt{L}_i, \wt{L}_{i'}\}$}. 
\end{array}\right.
\end{align}
Now for any $i \in {\rm range} \, \Phi$ we consider the $B'_j, j \in \cG$ that are contained in $\wt{B}_i$ (such $j$ in fact belong to $V \cG$, and the union of such $B'_j$ has a sizeable presence in $\wt{B}_i$, see (\ref{3.52}), (\ref{3.53})). We apply a similar procedure as above to the collection $\{j \in \cG; B'_j \subseteq \wt{B}_i\}$, and produce an attachment map $\Phi^i$: $\{j \in \cG; B'_j \subseteq \wt{B}_i\} \longrightarrow \{ j \in \cG; B'_j \subseteq \wt{B}_i\}$ so that
\begin{align}
&\Phi^i \circ \Phi^i = \Phi^i \label{3.56}
\intertext{and for each $j_1 \in {\rm range} \, \Phi^i$ and $j_2 \in \{j \in \cG$; $B'_j \subseteq \wt{B}_i \}  \backslash \{j_1\}$}
& \left\{ \begin{array}{rl}
{\rm a)} & \mbox{$\Phi^i (j_2) = j_1$ implies that $B'_{j_2}$ is contained in the $|B'_{j_1}|^{\frac{1}{d}}$-neighborhood of $B'_{j_1}$}, \label{3.57}
\\[1ex]
{\rm b)} & \mbox{$j_2 \in {\rm range}\, \Phi^i$ implies that $d_\infty(B'_{j_1}, B'_{j_2}) \ge \max \{|B'_{j_1}|^{\frac{1}{d}}, |B'_{j_2}|^{\frac{1}{d}}\}$}. 
\end{array}\right.
\end{align}

\medskip\n
Now for each $j \in {\rm range} \, \Phi^i$ the box $B'_j$ is such that, see (\ref{3.24}), (\ref{3.25}), there is a coordinate projection $\pi'_j$ and at least $\frac{1}{2} \;c_4 (\frac{|B'_j|}{|B_0|})^{\frac{d-1}{d}} (\alpha, \beta, \gamma)$-good boxes $B_0$ in $\partial_{B_0} \, \cU_1$ with distinct $\pi'_j$-projections. All such boxes are necessarily such that $N_u(D_0) \ge \beta \, {\rm cap}(D_0)$ (otherwise they would belong to $\cU_1$, see (\ref{1.40})).

\bigskip
Thus, for large $N$, we can apply Lemma \ref{lem1.1} and for each $j \in {\rm range} \, \Phi^i$ (where $i \in {\rm range} \, \Phi)$ find a sub-collection of $B_0$-boxes with $\pi'_j$-projections which are $\ov{K} L_0$-distant, all $(\alpha, \beta, \gamma)$-good with $N_u(D_0) \ge \beta \, {\rm cap}(D_0)$, their union having capacity at least $c \, |B'_j|^{\frac{d-2}{d}}$. By (\ref{3.53}) and (\ref{3.57}) a) the simple random walk starting in $\wt{B}_i$ reaches one of the $B'_j$, $j \in {\rm range}\, \Phi^i$, with a probability uniformly bounded from below, and hence reaches the above $B_0$-boxes within $B'_j$ that are $(\alpha, \beta,\gamma)$-good with $N_u(D_0) \ge \beta\, {\rm cap}(D_0)$ and with mutually $\ov{K}\,L_0$-distant $\pi'_j$-projections, with a probability uniformly bounded as well. This means that the union of such $B_0$-boxes as $j$ ranges over ${\rm range} \, \Phi^i$ has a capacity at least $\ov{c} \, |\wt{B}_i|^{\frac{d-2}{d}}$ (and these boxes are mutually $\ov{K} L_0$-distant). We can then apply Lemma \ref{lem1.2} and extract a sub-collection of these $B_0$-boxes of at most $\wt{c} (|\wt{B}_i | / |B_0|)^{\frac{d-2}{d}}$ boxes with union having capacity at least $\wt{c} ' \,{\rm cap} (\wt{B}_i)$. This sub-collection necessarily contains at least $[c(|\wt{B}_i| / |B_0|)^{\frac{d-2}{d}}]$ boxes, see (\ref{1.6}), and we can thus find for each $i \in {\rm range}\, \Phi$ a sub-collection of $B_0$-boxes in $\wt{B}_i$ with union denoted by $A_i$, mutually $\ov{K} \,L_0$-distant, all $(\alpha, \beta, \gamma)$-good with $N_u (D_0) \ge \beta \, {\rm cap}(D_0)$, and such that
\begin{equation}\label{3.58}
|A_i| / |B_0| = [c(|\wt{B}_i| / |B_0|)^{\frac{d-2}{d}}] \;\; \mbox{and} \;\; {\rm cap} (A_i) \ge c' {\rm cap} (\wt{B}_i).
\end{equation}
Thus, for large $N$, on the event in (\ref{3.19}) the union $A = \bigcup_{i \in {\rm range} \, \Phi} A_i$ has the property that when starting in $\bigcup_{1 \le i \le p} \wt{B}_i$ a simple random walk has a probability, which is at least $c'_0$, to reach $A$, and by (\ref{3.48}) - (\ref{3.50}), 
\begin{equation}\label{3.59}
\mbox{$h_A \ge c'_0$ on $B u b$ except maybe on a set of volume at most $\ve \, |D_N|$}.
\end{equation}
Note that the $2 \ov{K} L_1$-neighborhood of $A$ has volume at most
\begin{equation}\label{3.60}
c \, \ov{K}^d |B_1| \dsl_{ i \in {\rm range}\, \Phi} \;\dis\frac{|A_i|}{|B_0|} \stackrel{(\ref{3.58})}{\le} c \, \ov{K}^d |B_1|  \,\Big\{\Big(\dis\frac{|\wt{B}_1|}{|B_0|}\Big)^{\frac{d-2}{d}} + \dots + \Big(\dis\frac{|\wt{B}_p|}{|B_0|}\Big)^{\frac{d-2}{d}}\Big\} \le c \, p \, \ov{K}^d |B_1| \;\Big(\dis\frac{|\ov{D}_N|}{|B_0|}\Big)^{\frac{d-2}{d}} .
\end{equation}
Now by (\ref{3.50}) (recall $k(\ve)$ has been chosen in (\ref{3.43}))
\begin{equation}\label{3.61}
p \le |\ov{\cI}_k| \le c\, M^{kd},
\end{equation}
so that the $2 \ov{K}\,L_1$-neighborhood of $A$ has volume at most 
\begin{equation}\label{3.62}
c \, M^{kd}\, \ov{K}^d |B_1|\; \Big(\dis\frac{|D_N|}{|B_0|}\Big)^{\frac{d-2}{d}} \le c \, M^{kd} \,\ov{K}^d \;L^d_1 \;\dis\frac{N^{d-2}}{L_0^{d-2}} \stackrel{(\ref{1.7}), (\ref{1.8})}{\le} \ve \, |D_N| \; \mbox{for large $N$}.
\end{equation}
As for the set of possible shapes of $A$, we note that the $\wt{B}_1, \dots, \wt{B}_p$ are pairwise disjoint and belong to $\bigcup_{0 \le \ell \le k} \ov{\cI}_\ell$, and as already observed $p \le c\,M^{kd}$. There are at most $2^{|\ov{\cI}_k|}$ possible choices for each $\wt{B}_1,\dots, \wt{B}_p$, so that the number of choices for $p$, $\wt{B}_1, \dots \wt{B}_p$ is at most
\begin{equation}\label{3.63}
\dsl_{p \le c\, M^{kd}} \;2^{p|\ov{\cI}_k|} \le c \, M^{kd} \, 2^{c\, M^{2kd}} \le 2^{c' M^{2kd}}.
\end{equation}

\n
The choice of $\wt{B}_1, \dots \wt{B}_p$ determines the allocation map $\Phi$ and hence the range of $\Phi$. For each $\wt{B}_i$, $i \in {\rm range} \, \Phi$, there are $[c(|\wt{B}_i| / |B_0|)^{\frac{d-2}{d}}]$ $B_0$-boxes constituting $A_i$, see (\ref{3.58}), so the number of possibilities for $A_i$, $i \in {\rm range} \, \Phi$, is at most
\begin{equation}\label{3.64}
\begin{array}{l}
\Big(\dis\frac{|\wt{B}_1|}{|B_0|}\Big)^{\big(c\frac{|\wt{B}_1|}{|B_0|}\big)^{\frac{d-2}{d}}} \times \dots \times \Big(\dis\frac{|\wt{B}_p|}{|B_0|}\Big)^{\big(c\frac{|\wt{B}_p|}{|B_0|}\big)^{\frac{d-2}{d}}} \le \exp\Big\{ c \, \log \dis\frac{|\ov{D}_N|}{|B_0|} \; \Big[\Big(\dis\frac{|\wt{B}_1|}{|B_0|}\Big)^{\frac{d-2}{d}} + \dots + \Big(\dis\frac{|\wt{B}_p|}{|B_0|}\Big)^{\frac{d-2}{d}}\Big] \Big\}
\\[2ex]
 \stackrel{p \le c\,M^{kd}}{\le} \exp\Big\{c\,M^{kd} \Big(\log \dis\frac{|\ov{D}_N|}{|B_0|}\Big) \; \Big( \dis\frac{|\ov{D}_N|}{|B_0|}\Big)^{\frac{d-2}{d}}\Big\}.
\end{array}
\end{equation}
Thus, taking into account the number of possible choices for $p$, $\wt{B}_1, \dots \wt{B}_p$, see (\ref{3.63}), and for $A$ with given $p$, $\wt{B}_1,\dots \wt{B}_p$, we find that the number of possible shapes for $A$ is at most:
\begin{equation}\label{3.65}
2^{c' M^{2k}} \exp\Big\{c\,M^{kd} \Big( \log \; \dis\frac{|\ov{D}_N|}{|B_0|}\Big) \; \Big(\dis\frac{|\ov{D}_N|}{|B_0|}\Big)^{\frac{d-2}{d}}\Big\} = \exp\{o(N^{d-2})\}, \; \mbox{as $N \r \infty$}.
\end{equation}
Using a deterministic order on the set of possible shapes, we see that for large $N$ on the event in (\ref{3.19}) we can select a random set $C_\omega$, which for each given $\omega$ coincides with one of the sets $A$ described in (\ref{3.58}) - (\ref{3.59}). Combined with the construction of $C_\omega$ on the event in (\ref{3.15}) (see (\ref{3.16}) - (\ref{3.18})), and on the complement of the union of $\cB_N$ with the event in (\ref{3.11}) (where we set $C_\omega = \phi$), we see that for large $N$ the random set $C_\omega$ is a union of $B_0$-boxes in $[-4N, 4N]^d$ that satisfies (\ref{3.5}) (with $c_0=c'_0 \wedge \wt{c}$). This concludes the proof of Theorem \ref{3.1}. \hfill \mbox{\Large $\square$}

\medskip
\begin{remark}\label{rem3.1} \rm The constant $c_0 \in (0,1)$ that appears in Theorem \ref{theo3.1} plays an important role in the asymptotic upper bounds stated in Theorem \ref{theo4.3} and Corollary \ref{cor4.3} of the next section. One may wonder whether it is possible to choose $c_0$ arbitrarily close to $1$ in Theorem \ref{theo3.1}. We refer to Remark \ref{rem4.4a} below for some of the consequences of a positive answer to this question. \hfill $\square$
\end{remark}

\section{The asymptotic upper bound}

In this section the key Theorem \ref{theo4.3} states an asymptotic upper bound on the principal exponential rate of decay of the probability of an excess of disconnected points corresponding to the event $\cA_N = \{|D_N \backslash \cC^u_{2N} \, | \ge \nu \, |D_N|\}$ from (\ref{0.7}). In the case of a small excess, i.e.~when $\nu$ is close to $\theta_0(u)$, with $u \in (0, \ov{u} \wedge \wh{u})$, we recover the value $\ov{J}_{u,\nu}$ from (\ref{0.9}) and the asymptotic upper bound that we derive matches the asymptotic lower bound from (6.32) of \cite{Szni19d}, see Corollary \ref{cor4.3}.

\medskip
In this section we assume that (see (\ref{1.26}))
\begin{align}
& 0 < u < \ov{u}, \label{4.1}
\intertext{and recall that}
&\mbox{$c_0 \in (0,1)$ is the dimension dependent constant from Theorem \ref{theo3.1}}. \label{4.2}
\end{align}
We write $\eta(\cdot)$ for the function (see (\ref{0.2}))
\begin{equation}\label{4.3}
\eta(b) = \theta_0(b^2), \; b \ge 0.
\end{equation}
We then consider an auxiliary function $\wh{\eta}$: $\IR_+ \r \IR_+$ such that
\begin{equation}\label{4.4}
\left\{ \begin{array}{rl}
{\rm i)} & \mbox{$\wh{\eta}$ is non-decreasing, continuous and bounded},
\\[1ex]
{\rm ii)} & \wh{\eta} \ge \eta ,
\\[1ex]
{\rm iii)} & \wh{b}\, \stackrel{\rm def}{=} \inf\{b \ge 0; \; \wh{\eta} (b) \ge 1\} < \sqrt{u} + c_0 (\sqrt{\ov{u}} - \sqrt{u}).
\end{array}\right.
\end{equation}

\begin{center}
\psfrag{whe}{$\wh{\eta}(\cdot)$}
\psfrag{eta}{$\eta(\cdot)$}
\psfrag{1}{$1$}
\psfrag{0}{$0$}
\psfrag{b}{$b$}
\psfrag{u*}{$\sqrt{u}_*$}
\psfrag{squ}{$\sqrt{u}$}
\psfrag{uu}{\scriptsize $\sqrt{u} + c_0 (\sqrt{\ov{u}} - \sqrt{u})$}
\includegraphics[width=11cm]{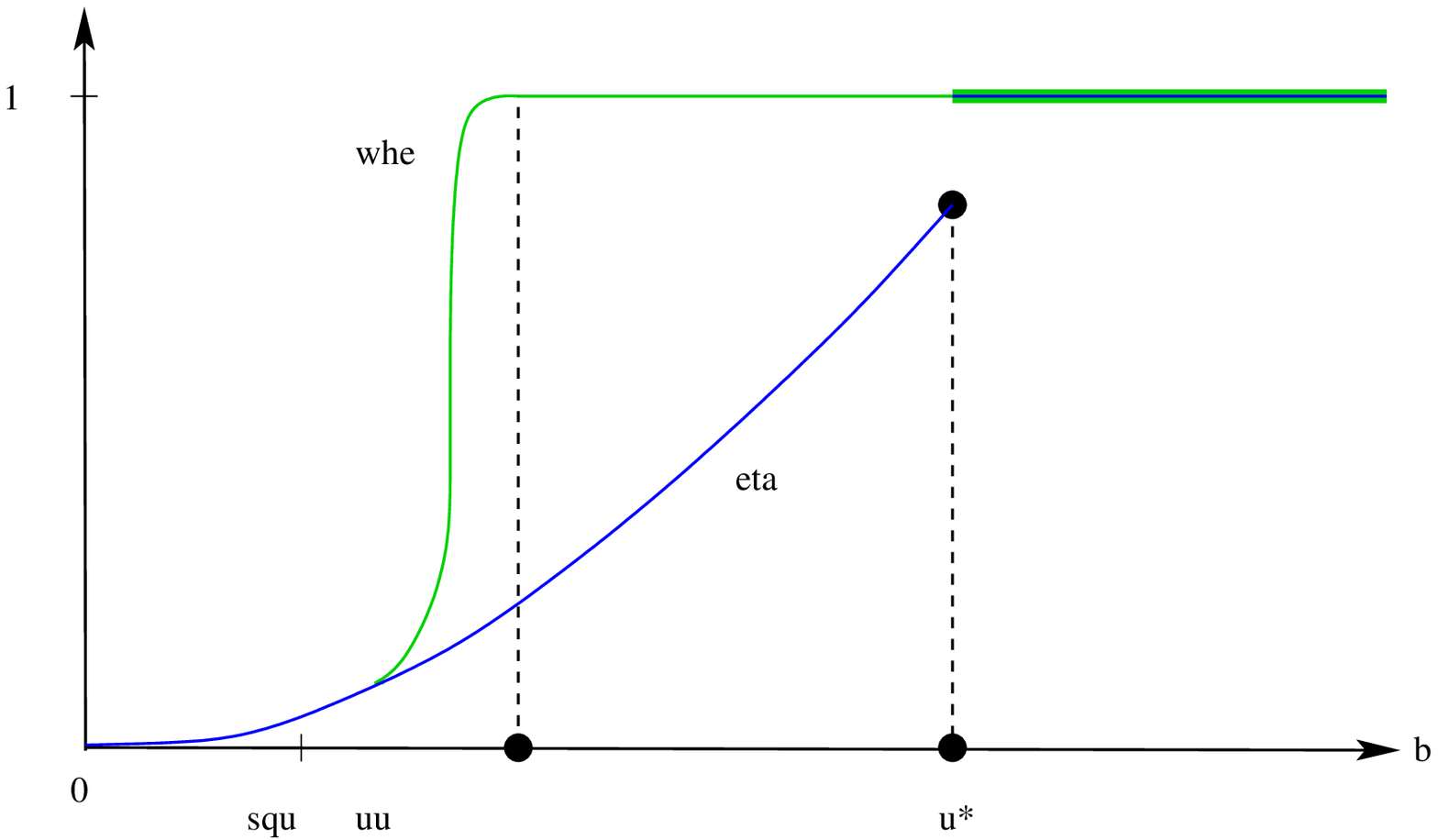}
\end{center}
\begin{center}
\begin{tabular}{ll}
Fig.~3 & An example of auxiliary function $\wh{\eta}$.
\end{tabular}
\end{center}

\n
The main step towards the key Theorem \ref{theo4.3} of this section is
\begin{proposition}\label{prop4.1}
Consider $u$ as in (\ref{4.1}), $\wh{\eta}$ as in (\ref{4.4}) and $\nu \in [\theta_0(u), 1)$. Then, one has
\begin{align}
& \limsup\limits_N \; \dis\frac{1}{N^{d-2}} \;\log \IP [\cA_N] \le - \wh{J}_{u,\nu}, \; \mbox{where} \label{4.5}
\\[1ex]
& \wh{J}_{u,\nu} = \inf\Big\{ \mbox{\f $\dis\frac{1}{2d}$} \; \dis\int_{\IR^d} | \nabla \varphi|^2 dz; \varphi \ge 0, \varphi \in D^1(\IR^d), \;\mbox{and} \; \dis\strokedint_D \wh{\eta} (\sqrt{u} + \varphi) \,dz \ge \nu\Big\} .\label{4.6}
\end{align}
\end{proposition}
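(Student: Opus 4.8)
The plan is to use Theorem \ref{theo2.1} to replace $\cA_N$ by $\cA'_N$, then to use Theorem \ref{theo3.1} to attach a Dirichlet-energy cost to the bubble set on the event $\cA'_N \setminus \cB_N$, and finally to pass to the scaling limit and recognize the variational problem $\wh{J}_{u,\nu}$. Concretely, I would fix $\alpha,\beta,\gamma$ close to $\ov{u}$ and $\ve$ small (to be sent to $0$ at the very end), with the precise relation between $\gamma$, $\ve$ and $\wh\eta$ chosen so that $\wt\theta(\cdot)$ from (\ref{2.11}) dominates (after composition with $\sqrt{\cdot}$) the function $\wh\eta$ up to an $O(\ve)$ error on the relevant range of levels; this is where the condition (\ref{4.4})~iii) on $\wh b < \sqrt u + c_0(\sqrt{\ov u}-\sqrt u)$ enters. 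By Lemma \ref{lem1.4}, $\IP[\cB_N]$ decays faster than $\exp(-C N^{d-2})$ for every $C$, so it suffices to bound $\IP[\cA'_N \cap \cB_N^c]$.

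On $\cA'_N \cap \cB_N^c$ and for large $N$, Theorem \ref{theo3.1} furnishes a random set $C_\omega$ with the five listed properties. The key point is that property v) forces the discrete equilibrium potential $h_{C_\omega}$ to be at least $c_0$ on the bubble set apart from a set of volume $\le \ve|D_N|$, and property i) guarantees that $\langle \ov e_{D_0}, L^u\rangle \ge \gamma$ for every $B_0 \subseteq C_\omega$. I would then construct, for each realization $C$ of $C_\omega$ in $\cS_N$, an appropriate non-negative superharmonic ``profile'' function $\psi_{C,(\lambda^-_{B_1})}$ that equals $(\sqrt\gamma - \sqrt u)\,h_C$ near $C$ and interpolates to the coarse-grained local levels $\lambda^-_{B_1}$ away from $C$ (this is the obstacle-problem step alluded to around (\ref{4.26})--(\ref{4.29}) of the introduction), so that the constraint in the definition of $\cA'_N$, namely $|Bub| + \sum_{B_1 \subseteq D_N\setminus Bub}\wt\theta(\lambda^-_{B_1})|B_1| \ge (\nu-6\ve)|D_N|$, translates into a statement of the form $\strokedint_D \wh\eta\big((\sqrt u + \varphi_N)^2\big)^{1/2}$-type bound, with $\varphi_N$ the rescaled profile. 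The cost of this event is then estimated by an exponential Chebyshev bound for the occupation-time field $L^u$ tested against $\ov e_{D_0}$ (in the spirit of Proposition 5.6 of \cite{Szni19d}), summed over the $\exp(o(N^{d-2}))$ choices of $C \in \cS_N$ thanks to property iii), and over the $\exp(o(N^{d-2}))$ coarse-grainings of the levels $(\lambda^-_{B_1})$ using the finite grid $\Sigma^0$. Property iv) ensures the $2\ov K L_1$-neighbourhood of $C$ is volume-negligible, so the overlap between the ``$h_C$ region'' and the ``local level region'' does not corrupt the estimate.

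The final step is the passage to the limit: after the Chebyshev bound one is left with an $N$-dependent variational quantity of the form $\inf \{ \frac1{2d}\int |\nabla\varphi_N|^2 : \varphi_N \text{ superharmonic away from } C, \text{ satisfying the volume constraint}\}$ (up to $O(\ve)$). Rescaling by $N$ and using that $L_1/N \to 0$, $L_0/N \to 0$, a lower-semicontinuity/compactness argument in $D^1(\IR^d)$ (Chapter 8 of \cite{LiebLoss01}) together with the continuity and monotonicity of $\wh\eta$ from (\ref{4.4})~i)--ii) shows that $\liminf_N$ of these quantities is at least $\wh J_{u,\nu}$ up to an error $o(1) + O(\ve)$ as $N\to\infty$ then $\ve \to 0$. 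I expect the main obstacle to be the construction and analysis of the superharmonic profile function and the verification that it correctly implements the bubble/local-level constraint in a way stable under scaling — in particular matching the discrete equilibrium potential $h_{C_\omega}$ (which only gives the value $c_0$, hence the appearance of $\theta^*$ rather than $\ov\theta_0$) to the continuum equilibrium potential, and controlling the interface between the region dominated by $C_\omega$ and the region governed by the coarse-grained levels $\lambda^-_{B_1}$. The rest is a careful but essentially routine adaptation of Section 5 of \cite{Szni19d}.
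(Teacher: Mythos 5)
Your proposal follows the paper's proof essentially step for step: Theorem \ref{theo2.1} to pass to $\cA'_N$, Lemma \ref{lem1.4} to discard $\cB_N$, Theorem \ref{theo3.1} to extract $C_\omega$ and attach a cost to the bubble set via $h_{C_\omega}$, the obstacle-problem superharmonic functions $\wh{f}_\tau$ of (\ref{4.26}), the $\exp(o(N^{d-2}))$ coarse-graining over $\cS_N$ and the discretized levels in $\Sigma^0$, the exponential Chebyshev bound from Proposition 5.6 of \cite{Szni19d}, and the scaling limit via $J^\#_{\nu-9\ve,r}$ (Proposition 5.7 of \cite{Szni19d}) followed by $\ve\to 0$, $r\to\infty$, $a\to 1$. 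One direction slip to fix: what is needed is $\wt{\theta}(a) \le 1\wedge\wh{\eta}(\sqrt{a})$ --- that is, $\wh{\eta}$ dominates $\wt{\theta}$, not the reverse as you wrote --- which is exactly what $\wh{\eta}\ge\eta$ from (\ref{4.4}) ii) together with $\wh{b}\le\sqrt{\gamma_-}$ (via (\ref{4.8})--(\ref{4.10})) guarantees, and this is the direction that lets the constraint $(\nu-6\ve)|D_N|\le |Bub|+\sum\wt{\theta}(\lambda^-_{B_1})|B_1|$ be upgraded to (\ref{4.15}) with $\wh{\eta}$ in place of $\wt{\theta}$.
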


\begin{remark}\label{rem4.2} \rm The above infimum is attained as can be shown by a similar compactness argument as used in the last paragraph of the proof of Corollary 5.9 of \cite{Szni19d}, i.e.~by extracting from a minimizing sequence $\varphi_n$ for (\ref{4.6}) a subsequence converging a.e.~and in $L^2_{\rm loc}(\IR^d)$ to a $\varphi \ge 0$ in $D^1(\IR^d)$ such that $\int_{\IR^d} |\nabla \varphi|^2 dz \le \liminf_n \int_{\IR^d} | \nabla \varphi_n |^2 dz = \wh{J}_{u,\nu}$. We thus have
\begin{equation}\label{4.7}
\wh{J}_{u,\nu} = \min \Big\{\mbox{\f $\dis\frac{1}{2d}$} \; \dis\int_{\IR^d} |\nabla \varphi |^2 dz; \varphi \ge 0, \varphi \in D^1(\IR^d), \; \mbox{and} \; \dis\strokedint_D \wh{\eta} (\sqrt{u} + \varphi)\,dz \ge \nu\Big\}.
\end{equation}

\vspace{-4ex}
\hfill $\square$
\end{remark}

\medskip
It may be useful at this stage to give a brief outline of the proof of Proposition \ref{prop4.1}. Thanks to Theorem \ref{theo2.1} we can in essence replace $\cA_N$ by $\cA'_N$. We use a coarse graining procedure to bound $\IP[\cA'_N]$. Compared to Section 5 of \cite{Szni19d} the main challenge here has to do with the presence in definition of the event $\cA'_N$ of the bubble set, with its non-local as well as irregular nature. To address this challenge we use the random set constructed in Theorem \ref{theo3.1}. Thanks to its coarse grained nature we essentially fix the set $C_\omega$, and keep track of discretized versions of the averages $\langle \ov{e}_{B_1}, L^u\rangle$ of occupation times in $B_1$-boxes away from $C_\omega$. A key point is to produce a formulation accounting for the constraints defining $\cA'_N$ that has a good behavior under scaling limit. For this purpose we consider certain discrete non-negative superharmonic functions $\wh{f}_\tau$ solving an obstacle problem on $\IZ^d$, see (\ref{4.26}) and (\ref{4.28}) for the constraint they satisfy. Once this proper formulation is achieved, the derivation of probabilistic bounds via exponential Chebyshev estimates expressed in terms of Dirichlet energies of these superharmonic functions, and the control of the related scaling limit behavior can be tackled along the same lines as in Section 5 of \cite{Szni19d}. The Proposition \ref{prop4.1} follows then.

\bigskip\n
{\it Proof of Proposition \ref{prop4.1}:} We consider $u$ as in (\ref{4.1}), $\wh{\eta}$ as in (\ref{4.4}) and $\nu \in (\theta_0,(u),1)$ (when $\nu = \theta_0(u)$, $\wh{J}_{u,\nu} = 0$, as seen by choosing $\varphi = 0$ in (\ref{4.6}), and (\ref{4.5}) is immediate). We then pick
\begin{equation}\label{4.8}
\mbox{$\alpha > \beta > \gamma$ in $(u,\ov{u})$ so that $\wh{b} < \sqrt{u} + c_0(\sqrt{\gamma} - \sqrt{u})$},
\end{equation}
where $\wh{b}$ is defined in (\ref{4.4}) iii). We then select $\ve \in (0,1)$ such that
\begin{equation}\label{4.9}
\nu > 10^3 \,\ve + \theta_0(u) \; \mbox{and} \; \wh{b} + \ve < \sqrt{u} + c_0 (\sqrt{\gamma} - \sqrt{u}),
\end{equation}
as well as the finite grid $\Sigma^0 (\gamma, u, \ve)$ from (\ref{2.6}). By (\ref{4.9}) and (\ref{2.6}), we see that (see (\ref{2.7}) for notation)
\begin{equation}\label{4.10}
\wh{b} \le \sqrt{u} + c_0 (\sqrt{\gamma}_- - \sqrt{u}) \quad  (< \sqrt{\gamma}_-).
\end{equation}
We then assume that (see Lemma \ref{lem1.2}, Lemma \ref{lem1.4} and Theorem \ref{theo2.1} for notation):
\begin{equation}\label{4.11}
K \ge c_1 \vee c_2 (\alpha, \beta, \gamma) \vee c_3 (\alpha, \beta, \gamma, u, \ve),
\end{equation}
so that Theorems \ref{theo2.1} and \ref{theo3.1} apply. In the notation of (\ref{2.11}) and (\ref{1.42}) we set
\begin{equation}\label{4.12}
\cA''_N = \cA'_N \backslash \cB_N,
\end{equation}
and find that as a consequence of (\ref{2.10}) of Theorem \ref{theo2.1} and (\ref{1.42}) of Lemma \ref{lem1.4}:
\begin{equation}\label{4.13}
\limsup\limits_N \; \dis\frac{1}{N^{d-2}} \;\log \IP[\cA_N] \le \limsup\limits_N \; \dis\frac{1}{N^{d-2}} \; \log \IP[\cA''_N].
\end{equation}
Recall the random set $C_\omega$ from Theorem \ref{theo3.1}. Then, for large $N$ on $\cA''_N$ by (\ref{2.11})
\begin{equation}\label{4.14}
(\nu - 6 \ve) \,|D_N| \le \dsl_{B_1 \subseteq D_N \backslash B u b} \wt{\theta}(\lambda^-_{B_1}) \,|B_1| + |B u b|,
\end{equation}
and by (\ref{3.5})~v), except maybe on a set of at most $\ve \, |D_N|$ points, $h_{C_\omega} \ge c_0$ on $B u b$, so that by (\ref{4.10}), $\sqrt{u} + (\sqrt{\gamma}_- - \sqrt{u}) \;h_{C_\omega} \ge \wh{b}$ on $B u b$ except on a set of at most $\ve \, |D_N|$ points. Taking into account (\ref{4.4}) and $\wh{b} \le \sqrt{\gamma}_-$, as well as the definition of $\wt{\theta}$ below (\ref{2.11}), we find that $1 \wedge \wh{\eta} \, (\sqrt{a}) \ge \wt{\theta}(a)$ for $a \ge 0$. We then see that for large $N$ on $\cA''_N$:
\begin{equation}\label{4.15}
\begin{split}
(\nu - 7 \ve) |D_N|& \le \dsl_{B_1 \subseteq D_N \backslash B u b} 1 \wedge \wh{\eta} \;\big(\sqrt{\lambda^-_{B_1}}\big) \; |B_1| 
\\
& \;\; + \dsl_{B_1 \subseteq B u b} \; \dsl_{x \in B_1} 1 \wedge \wh{\eta} \,\big(\sqrt{u} + (\sqrt{\gamma}_- - \sqrt{u}) \, h_{C_\omega}(x)\big)
\\[1ex]
& \le \dsl_{B_1 \in \cC_1} \; \dsl_{x \in B_1} 1 \wedge \wh{\eta} \,\big(\sqrt{\lambda^-_{B_1}} \vee  \big\{\sqrt{u} + (\sqrt{\gamma}_- - \sqrt{u}) \;h_{C_\omega}(x)\big\}\big)
\end{split}
\end{equation}
(with $\cC_1$ as in (\ref{2.9})).

\medskip
Given $C \in \cS_N$ (i.e. the set of possible values of the random set $C_\omega$ in Theorem \ref{theo3.1}) we define
\begin{align}
&\mbox{$\cC_C$ the collection of boxes $B_1$ in $\cC_1$ at $| \cdot |_\infty$-distance at least $(\ov{K} + 1) \, L_1$ from $C$}, \label{4.16}
\intertext{and for each $\tau \in \{0, \dots, \ov{K} -1\}^d$, in the notation of (\ref{2.64})}
& \cC_{C,\tau} = \cC_C \cap \cC_{1,\tau}. \label{4.17}
\end{align}
We will now use the grid $\Sigma^0$ to discretize the square root of the average values $\langle \ov{e}_{B_1}, L^u\rangle$ of the occupation time in boxes $B_1 \in \cC_C$. Specifically, for $C$ in $\cS_N$ and $\tau$ in $\{0,\dots, \ov{K} - 1\}^d$ we define
\begin{equation}\label{4.18}
\left\{ \begin{split}
\cF_C& = \; \big\{\ov{f}_C = (f_{B_1})_{B_1 \in \cC_C}; \, f_{B_1} \ge 0, \; f^2_{B_1} \in \{0\} \cup \Sigma^0 \; \mbox{for each $B_1 \in \cC_C\big\}$}
\\[1ex]
\cF_{C,\tau} & = \; \big\{\ov{f}_{C,\tau} = (f_{B_1})_{B_1 \in \cC_{C,\tau}}; \, f_{B_1} \ge 0, \; f^2_{B_1} \in \{0\} \cup \Sigma^0 \; \mbox{for each $B_1 \in \cC_{C,\tau}\big\}$}
\end{split}\right.
\end{equation}
as well as
\begin{equation}\label{4.19}
\left\{ \begin{split}
\cA_{\ov{f}_C} & = \bigcap\limits_{B_0 \subseteq C} \; \big\{ \langle \ov{e}_{D_0}, L^u\rangle \ge \gamma\big\} \cap \bigcap\limits_{B_1 \in \cC_C} \big\{\langle \ov{e}_{B_1}, L^u \rangle \ge (1- \ve) \, f^2_{B_1}\big\}, \; \mbox{for $\ov{f}_C \in \cF_C$},
\\[1ex]
\cA_{\ov{f}_{C,\tau}} & = \bigcap\limits_{B_0 \subseteq C} \; \big\{ \langle \ov{e}_{D_0}, L^u\rangle \ge \gamma\big\} \cap \bigcap\limits_{B_1 \in \cC_{C,\tau}} \big\{\langle \ov{e}_{B_1}, L^u \rangle \ge (1- \ve) \, f^2_{B_1}\big\}, \; \mbox{for $\ov{f}_{C,\tau} \in \cF_{C,\tau}$}.
\end{split}\right.
\end{equation}
As below (\ref{2.23}), we can consider $(\Sigma, \kappa, \mu)$-good boxes, where we now choose the local function $F = 0$. By Lemma 2.4 of \cite{Szni19d} one knows that
\begin{equation}\label{4.20}
\langle \ov{e}_{B_1}, L^u \rangle \ge (1 - \kappa) \, \lambda^-_{B_1} \stackrel{(\ref{2.23})}{\ge} (1 - \ve) \,\lambda^-_{B_1}, \;\mbox{when $B_1$ is a $(\Sigma, \kappa, \mu)$-good box}.
\end{equation}
Then by Proposition 3.1 and (4.9) of \cite{Szni19d}, if we assume that
\begin{align}
& K \ge c(\gamma, u, \ve), \label{4.21}
\intertext{it follows that}
& \lim\limits_N \; \dis\frac{1}{N^{d-2}} \; \log \IP [\wh{\cB}_N] = - \infty, \; \mbox{if $\wh{\cB}_N = \Big\{\dsl_{B_1 \in \cC_1} |B_1| \, 1\{B_1$ is $(\Sigma, \kappa, \mu)$-bad$\} \ge \ve \, |D_N|\Big\}$}.\label{4.22}
\end{align}
Thus, assuming (\ref{4.21}) in addition to (\ref{4.11}), we see that
\begin{equation}\label{4.23}
 \limsup\limits_N \; \dis\frac{1}{N^{d-2}} \; \log \IP [\cA_N] \le  \limsup\limits_N \; \dis\frac{1}{N^{d-2}} \; \log \IP [\wh{\cA}_N], \; \mbox{with $\wh{\cA}_N = \cA''_N \backslash \wh{\cB}_N$}.
\end{equation}
We now proceed with the coarse graining of the event $\wh{\cA}_N$. Choosing $f^2_{B_1} = \lambda^-_{B_1}$ when $B_1$ is a $(\Sigma, \kappa, \mu)$-good box and setting $f^2_{B_1} = 0$ when $B_1$ is a $(\Sigma, \kappa, \mu)$-bad box, we see that for large $N$ and any $C \in \cS_N$ (recalling that all boxes $B_0 \subseteq C_\omega$ are $(\alpha, \beta, \gamma)$-good and satisfy $N_u(D_0) \ge \beta \, {\rm cap}(D_0)$ so that by (\ref{1.38}) iii) $\langle \ov{e}_{D_0}, L^u \rangle \ge \gamma$), we have for large $N$ (with the notation (\ref{4.19}))
\begin{equation}\label{4.24}
\wh{\cA}_N \cap \{C_\omega = C\} \subseteq \bigcup\limits_{\ov{f}_C \in \wh{\cF}_C} \cA_{\ov{f}_C}, \; \; \mbox{for each $C \in \cS_N$},
\end{equation}
where using (\ref{3.5}) iv), the definition of $\wh{\cB}_N$ in (\ref{4.22}), and (\ref{4.15}), $\wh{\cF}_C$ denotes the sub-collection of $\cF_C$ of $\ov{f}_C = (f_{B_1})_{B_1 \in \cC_C}$ such that
\begin{equation}\label{4.25}
(\nu - 9 \ve) |D_N| \le \dsl_{B_1 \in \cC_C} \; \dsl_{x \in B_1} \wh{\eta} \,\big(f_{B_1} \vee \big\{ \sqrt{u} + (\sqrt{\gamma} - \sqrt{u}) \, h_C(x)\big\}\big).
\end{equation}
Now for any $\tau \in \{0,\dots, \ov{K} - 1\}^d$ and $\ov{f}_{C,\tau}$ in $\cF_{C,\tau}$ (see (\ref{4.18})) we define
\begin{equation}\label{4.26}
\begin{split}
\wh{f}_\tau  = &\; \mbox{the smallest non-negative superharmonic function on $\IZ^d$ such that}
\\
&\; \mbox{$\wh{f}_\tau \ge (f_{B_1} - \sqrt{u})_+$ on each $B_1 \in \cC_{C,\tau}$ and}
\\
& \; \mbox{$\wh{f}_\tau \ge \sqrt{\gamma} -  \sqrt{u}$ on each $D_0$ for $B_0 \subseteq C$.}
\end{split}
\end{equation}
In particular, note that $\wh{f}_\tau + \sqrt{u} \ge \sqrt{u} + (\sqrt{\gamma} - \sqrt{u}) \, h_C$. Then, for any $\ov{f}_C$ in the sub-collection $\wh{\cF}_C$ (see above (\ref{4.25})), we can consider the $\ov{f}_{C,\tau}$ with $\tau \in \{0,\dots,\ov{K} - 1\}^d$ obtained as restrictions of $\ov{f}_C$ to $\cC_{C,\tau}$, and the corresponding non-negative superharmonic functions $\wh{f}_\tau$, $\tau \in \{0,\dots, \ov{K}-1\}^d$. It now follows from the above remark that for all $\ov{f}_C$ in $\wh{\cF}_C$ one has
\begin{equation}\label{4.27}
\begin{array}{l}
(\nu - 9 \ve) \,|D_N| \le \dsl_{\tau \in \{0,\dots, \ov{K} - 1\}^d} \; \dsl_{B_1 \in \cC_{C,\tau}} \; \dsl_{x \in B_1} \wh{\eta} \,\big(\sqrt{u} + \wh{f}_\tau(x)\big),
\\
\\[-2ex]
\stackrel{(\ref{2.64})}{\le}  \dsl_{\tau \in \{0,\dots, \ov{K} - 1\}^d}  \; \dsl_{B_1 \in \cC_{1,\tau}}\; \dsl_{x \in B_1}  \wh{\eta} \,\big(\sqrt{u} + \wh{f}_\tau(x)\big)
\end{array}
\end{equation}
so that for some $\tau \in \{0,\dots, \ov{K} - 1\}^d$ one has
\begin{equation}\label{4.28}
\dis\frac{(\nu - 9 \ve)}{\ov{K}^d} \; |D_N| \le \dsl_{B_1 \in \cC_{1,\tau}} \; \dsl_{x \in B_1} \wh{\eta} \,\big(\sqrt{u} + \wh{f}_\tau (x)\big).
\end{equation}
We thus see that for large $N$,
\begin{equation}\label{4.29}
\wh{\cA}_N \cap \{C_\omega = C\} \subseteq \bigcup\limits_{\tau \in \{0, \dots, \ov{K} -1\}^d} \;\; \bigcup\limits_{\ov{f}_{C,\tau} \in \wh{\cF}_{C,\tau}} \cA_{\ov{f}_{C,\tau}}, \; \mbox{for each $C \in \cS_N$},
\end{equation}
where $\wh{\cF}_{C,\tau}$ denotes the sub-collection of $\cF_{C,\tau}$ in (\ref{4.18}) where (\ref{4.28}) holds. Note that $C$ varies in $\cS_N$ and by (\ref{3.5}) iii) one has $|\cS_N| = \exp\{o (N^{d-2})\}$, as $N \r \infty$. In addition, for each $C$ and $\tau$, we have $|\wh{\cF}_{C,\tau}| \le |\cF_{C,\tau}| \le (1 + |\Sigma^0|)^{|\cC_1|} \stackrel{(\ref{1.8})}{\le} ( 1 + | \Sigma^0| )^{c N^{d-2} / \log N} = \exp\{o(N^{d-2})\}$, as $N \r \infty$. Thus, by (\ref{4.23}) and (\ref{4.29}), we find that
\begin{equation}\label{4.30}
\limsup\limits_N \; \dis\frac{1}{N^{d-2}} \; \log \IP[\cA_N] \le \limsup\limits_N \;\; \sup\limits_C \;\; \sup\limits_\tau \; \;\sup\limits_{\wh{\cF}_{C,\tau}} \;\; \dis\frac{1}{N^{d-2}} \; \log \IP [\cA_{\ov{f}_{C,\tau}}],
\end{equation}
where $C$ varies in $\cS_N$ and $\tau$ in $\{0, \dots, \ov{K} - 1\}^d$ in the above supremum.

\medskip
The proofs in Proposition 5.4 of \cite{Szni19d} and the exponential Chebyshev bound in Proposition 5.6 of \cite{Szni19d} can be repeated in the present context (due to the fact that the $B_0$-boxes in $C$ or the corresponding $D_0$-boxes have mutual $| \cdot |_\infty$-distance at least $\ov{K}\,L_0$ by (\ref{3.5}) ii), and are at $| \cdot |_\infty$-distance at least $\ov{K}\,L_1$ from the $B_1$-boxes in $\cC_{C,\tau}$ by (\ref{4.16}), (\ref{4.17}), which themselves are at mutual $| \cdot |_\infty$-distance at least $\ov{K} \, L_1$). As a result, setting
\begin{equation}\label{4.31}
I_{\ve, K} = \liminf\limits_N \;\; \inf\limits_C \;\; \inf\limits_\tau \;\; \inf\limits_{\wh{\cF}_{C,\tau}} \;\;\dis\frac{1}{N^{d-2}} \; \cE(\wh{f}_\tau, \wh{f}_\tau), 
\end{equation}
where for $f$: $\IZ^d \r \IR$, $\cE(f,f) = \frac{1}{2} \,\sum_{|x-y| = 1} \; \frac{1}{2d} \;(f(y) - f(x))^2 ( \le \infty)$ stands for the discrete Dirichlet form, one obtains (see (5.49) of \cite{Szni19d}) that for $a \in (0,1)$ and $K \ge c_7 (\alpha, \beta, \gamma, u, \ve, a)$ 
\begin{equation}\label{4.32}
\limsup\limits_N \; \dis\frac{1}{N^{d-2}} \; \log \IP[\cA_N] \le -a \big(1 - \ve (1 + \sqrt{u})\big) \;I_{\ve, K} + c\, \sqrt{u} \, \ve.
\end{equation}
We then introduce for $b \ge 0$ and $r \ge 1$
\begin{equation}\label{4.33}
\begin{split}
J^\#_{b,r} = \inf\Big\{ \mbox{\f $\dis\frac{1}{2d}$} \; \dis\int_{\IR^d} |\nabla \varphi |^2 dz;  &\; \varphi \ge 0 \;\mbox{supported in} \; B_{\IR^d}(0,400 r), 
\\[-2ex]
&\; \varphi \in H^1(\IR^d), \strokedint_D \wh{\eta} (\sqrt{u} + \varphi) \, dz \ge b\},
\end{split}
\end{equation}
with $B_{\IR^d} (a,r)$ the closed ball with center $a$ in $\IR^d$ and radius $r$ for the supremum distance, and $H^1(\IR^d)$ the Sobolev space of measurable functions on $\IR^d$, which are square integrable together with their first partial derivatives (see Chapter 7 of \cite{LiebLoss01}). The same proof as in Proposition 5.7 of \cite{Szni19d} (actually simplified in the present context by the fact that the constraint (\ref{4.28}) is directly expressed in terms of $\wh{f}_\tau$ and the statement corresponding to (5.68) of \cite{Szni19d} easier to obtain) shows that
\begin{equation}\label{4.34}
\mbox{for $K \ge 100$, $\ve$ as in (\ref{4.9}), and integer $r \ge 10$, $\big(1 + \dis\frac{c_8}{r^{d-2}}\big) \;I_{\ve, K} \ge J^\#_{\nu - 9 \ve, r}$}.
\end{equation}
Inserting this lower bound in the right member of (\ref{4.32}) shows that for $\ve$ as in (\ref{4.9}), $r \ge 10$ integer, $a \in (0,1)$, one has
\begin{equation}\label{4.35}
\limsup\limits_N \;\; \dis\frac{1}{N^{d-2}} \; \log \IP [\cA_N] \le - a\big(1 + \dis\frac{c_8}{r^{d-2}}\big)^{-1} \big(1 - \ve(1 + \sqrt{u})\big) \, J^\#_{\nu - 9 \ve, r}+ c' \, \sqrt{u} \, \ve.
\end{equation}
Letting $\ve \r 0$ (see below (5.73) of \cite{Szni19d}), then letting $r \r \infty$, and then $a \r 1$, we obtain that
\begin{equation}\label{4.36}
\begin{split}
\limsup\limits_N \; \dis\frac{1}{N^{d-2}} \; \log \IP[\cA_N] & \le - \inf \Big\{\mbox{\f $\dis\frac{1}{2d}$} \; \dis\int_{\IR^d} |\nabla \phi|^2 dz; \varphi \ge 0, \varphi \in D^1(\IR^d) \; \mbox{and}
\\
& \qquad \quad \;\dis\strokedint_D \wh{\eta} \,(\sqrt{u} + \varphi) \, dz \ge \nu\Big\}
\\
&\!\!\!\! \stackrel{(\ref{4.6})}{=} - \wh{J}_{u,\nu}.
\end{split}
\end{equation}
This concludes the proof of (\ref{4.5}) and hence of Proposition \ref{prop4.1}. \hfill \mbox{\Large $\square$} 

\medskip
We now come to the main result of this section. With $u$ and $c_0$ and in (\ref{4.1}), (\ref{4.2}), we consider the functions (see Figure~1 in the Introduction for a sketch of $\theta^*$):
\begin{equation}\label{4.37a}
\left\{\begin{split}
\theta^*(a) & = \theta_0(a) \; 1\big\{a < \big(\sqrt{u} + c_0 (\sqrt{\ov{u}} - \sqrt{u})\big)^2\big\}  + 1\big\{a \ge \big(\sqrt{u} + c_0(\sqrt{\ov{u}} - \sqrt{u})\big)^2\big\}, \; a \ge 0,
\\
\eta^* (b) & = \theta^*(b^2) = \eta(b)\; 1\big\{b < \sqrt{u} + c_0 (\sqrt{\ov{u}} - \sqrt{u})\} + 1 \{ b \ge \sqrt{u} + c_0 (\sqrt{\ov{u}} - \sqrt{u})\big\}, b \ge 0.
\end{split}\right.
\end{equation}

\begin{theorem}\label{theo4.3}
Consider $u$ as in (\ref{4.1}) and $\nu \in [\theta_0(u), 1)$, then
\begin{align}
& \limsup\limits_N  \; \dis\frac{1}{N^{d-2}} \; \log \IP[\cA_N] \le - J^*_{u,\nu}, \;\mbox{where} \label{4.38a}
\\[1ex]
& J^*_{u,\nu} = \min \Big\{\mbox{\f $\dis\frac{1}{2d}$} \; \dis\int_{\IR^d} \; |\nabla \varphi |^2 \,dz; \varphi \ge 0, \varphi \in D^1(\IR^d), \;\mbox{and} \; \dis\strokedint_D \eta^* (\sqrt{u} + \varphi) \,dz \ge \nu\Big\}. \label{4.39a}
\end{align}
\end{theorem}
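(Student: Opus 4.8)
The plan is to deduce Theorem \ref{theo4.3} from Proposition \ref{prop4.1} by optimizing over the auxiliary function. Write $b_0 = \sqrt{u} + c_0(\sqrt{\ov{u}} - \sqrt{u})$, so that $\eta^*(b) = \eta(b)$ for $b < b_0$, $\eta^*(b) = 1$ for $b \ge b_0$, and let $\ov{\eta}^*$ denote the right-continuous modification of $\eta^*$; note $\ov{\eta}^*$ is bounded, non-decreasing, upper semicontinuous, and equals the pointwise infimum of the functions $\wh{\eta}$ admissible in (\ref{4.4}). First I would produce a good approximating sequence: pick continuous non-decreasing $\psi_n$ with $0 \le \psi_n \le 1$ and $\psi_n \downarrow \ov{\eta}^*$ pointwise (possible since $\ov{\eta}^*$ is bounded non-decreasing u.s.c.; one may take the running maxima $b \mapsto \sup_{b' \le b}\psi_n(b')$ to keep them non-decreasing, and successive minima to keep them decreasing in $n$), and set $\wh{\eta}_n(b) = \psi_n\big(b\,\tfrac{b_0}{b_0 - 1/n}\big)$ for $n$ large. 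A short check shows $\wh{\eta}_n$ is continuous, non-decreasing, bounded by $1$, satisfies $\wh{\eta}_n \ge \eta$, and is identically $1$ on $[b_0 - 1/n, \infty)$, so (\ref{4.4}) holds with $\wh{b}_n \le b_0 - 1/n < b_0$; moreover $\wh{\eta}_n \downarrow \ov{\eta}^*$ pointwise and the $\wh{\eta}_n$ decrease in $n$. For each $n$ one may choose $\alpha > \beta > \gamma$ in $(u,\ov{u})$ with $\gamma$ close enough to $\ov{u}$ that $\wh{b}_n < \sqrt{u} + c_0(\sqrt{\gamma} - \sqrt{u})$, so Proposition \ref{prop4.1} applies and gives $\limsup_N N^{-(d-2)} \log \IP[\cA_N] \le -\wh{J}^{(n)}_{u,\nu}$, where $\wh{J}^{(n)}_{u,\nu}$ is the variational value (\ref{4.6}) attached to $\wh{\eta}_n$.

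Next I would pass to the limit $n \to \infty$. Since $(\wh{\eta}_n)$ decreases, $(\wh{J}^{(n)}_{u,\nu})$ is non-decreasing, and it is bounded: the test function $\varphi = (b_0 - \sqrt{u})\,h_B$, with $B$ a large ball so that $|B \cap D|/|D| > \nu$, is admissible for every $\wh{\eta}_n$ because $\wh{\eta}_n(\sqrt{u}+\varphi) = 1$ on $B$. Taking near-minimizers $\varphi_n$ for $\wh{J}^{(n)}_{u,\nu}$ with uniformly bounded Dirichlet energy, I extract, exactly as in Remark \ref{rem4.2}, a subsequence converging a.e.\ and in $L^2_{\mathrm{loc}}(\IR^d)$ to some $\varphi \ge 0$ in $D^1(\IR^d)$ with $\tfrac{1}{2d}\int_{\IR^d}|\nabla\varphi|^2\,dz \le \sup_n \wh{J}^{(n)}_{u,\nu}$. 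For fixed $m$ and $n \ge m$ one has $\wh{\eta}_m \ge \wh{\eta}_n$, hence $\strokedint_D \wh{\eta}_m(\sqrt{u}+\varphi_n)\,dz \ge \nu$; letting $n \to \infty$ (dominated convergence, using continuity and boundedness of $\wh{\eta}_m$) gives $\strokedint_D \wh{\eta}_m(\sqrt{u}+\varphi)\,dz \ge \nu$, and then $m \to \infty$ (dominated convergence, $\wh{\eta}_m \downarrow \ov{\eta}^*$) gives $\strokedint_D \ov{\eta}^*(\sqrt{u}+\varphi)\,dz \ge \nu$. Thus $\varphi$ is admissible for the variational problem with $\ov{\eta}^*$, so $\sup_n \wh{J}^{(n)}_{u,\nu} \ge J^*_{u,\nu}(\ov{\eta}^*)$, and combining with the previous paragraph, $\limsup_N N^{-(d-2)} \log \IP[\cA_N] \le -J^*_{u,\nu}(\ov{\eta}^*)$.

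It then remains to identify $J^*_{u,\nu}(\ov{\eta}^*)$ with the quantity $J^*_{u,\nu} = J^*_{u,\nu}(\eta^*)$ of (\ref{4.39a}). Since $\eta^* \le \ov{\eta}^*$, the inequality $J^*_{u,\nu}(\eta^*) \ge J^*_{u,\nu}(\ov{\eta}^*)$ is immediate, so only the reverse needs proof. The functions $\eta^*$ and $\ov{\eta}^*$ differ only on the countable set $J_0$ of jump points of $\eta^*$ (this set lies in $[\wh{u}^{1/2}, b_0)$ and is conjecturally empty). Given $\varphi$ admissible for $\ov{\eta}^*$, consider $\varphi_{t,\delta} = (1+t)\varphi + \delta\,h_B$ with $B \supseteq D$ a fixed large ball and $t,\delta > 0$ small; it lies in $D^1(\IR^d)$, is $\ge \varphi$, and on $D$ equals $(1+t)\varphi + \delta$. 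The latter has positive-measure level sets (over $D$) only at the values $\sqrt{u} + (1+t)a + \delta$, where $a$ ranges over the (countable) set of atoms of the pushforward $\varphi_*(\mathrm{Leb}|_D)$; for all but countably many $\delta$ these values avoid $J_0$, and then $\strokedint_D \eta^*(\sqrt{u}+\varphi_{t,\delta})\,dz = \strokedint_D \ov{\eta}^*(\sqrt{u}+\varphi_{t,\delta})\,dz \ge \strokedint_D \ov{\eta}^*(\sqrt{u}+\varphi)\,dz \ge \nu$ by monotonicity of $\ov{\eta}^*$, so $\varphi_{t,\delta}$ is admissible for $\eta^*$. Letting $t,\delta \to 0$ along admissible values, $\tfrac{1}{2d}\int|\nabla\varphi_{t,\delta}|^2 \to \tfrac{1}{2d}\int|\nabla\varphi|^2$, whence $J^*_{u,\nu}(\eta^*) \le \tfrac{1}{2d}\int|\nabla\varphi|^2$; taking the infimum over $\varphi$ gives $J^*_{u,\nu}(\eta^*) \le J^*_{u,\nu}(\ov{\eta}^*)$. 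Therefore $J^*_{u,\nu}(\eta^*) = J^*_{u,\nu}(\ov{\eta}^*)$, the minimum in (\ref{4.39a}) is attained by the same direct-method argument, and (\ref{4.38a}) follows (the case $\nu = \theta_0(u)$, where $J^*_{u,\nu} = 0$, being trivial). The main obstacle is precisely this last step: reconciling the possible discontinuities of $\theta_0$ with the fact that $D^1(\IR^d)$-functions may be constant on sets of positive Lebesgue measure; the rescaling-and-sprinkling device $\varphi \mapsto (1+t)\varphi + \delta h_B$ is what makes the level sets generic and closes the argument.
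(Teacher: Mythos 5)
Your proof is correct and follows essentially the same strategy as the paper: approximate $\eta^*$ from above by a decreasing sequence of auxiliary functions $\wh{\eta}_n$ satisfying (\ref{4.4}), apply Proposition \ref{prop4.1} to each, and pass to the limit in the variational problems via the compactness/lower-semicontinuity argument of Remark \ref{rem4.2}. Your two-step dominated-convergence argument (first $n\to\infty$ with $\wh{\eta}_m$ fixed, then $m\to\infty$) is a valid substitute for the paper's single reverse-Fatou step, and using near-minimizers in place of minimizers is fine.

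The main divergence is that you target the right-continuous modification $\ov{\eta}^*$ rather than $\eta^*$ itself, and then close the gap with an extra rescaling-and-sprinkling argument ($\varphi\mapsto(1+t)\varphi+\delta h_B$) showing $J^*_{u,\nu}(\eta^*)=J^*_{u,\nu}(\ov{\eta}^*)$. That extra step is correct as stated, but it is superfluous in this problem and is not in the paper. The reason is that $\theta_0$ is continuous on $[0,u_*)$ (left-continuity is stated after (\ref{0.2}); right-continuity on $[0,u_*)$ follows from the uniqueness of the infinite cluster due to \cite{Teix09a}, which is what the paper's parenthetical ``possible jump \emph{at} $u_*$'' is pointing to). Since $\sqrt{u}+c_0(\sqrt{\ov{u}}-\sqrt{u})<\sqrt{\ov{u}}\le\sqrt{u_*}$, the function $\eta$ is continuous on a neighborhood of $[0,\sqrt{u}+c_0(\sqrt{\ov{u}}-\sqrt{u})]$, so $\eta^*$ coincides with its right-continuous modification everywhere, and the jump set $J_0$ in your last paragraph is empty --- not ``conjecturally'' but provably so. The same continuity fact is implicitly relied upon by the paper's explicit construction $\wh{\eta}_n=\max(\eta,\psi_n)$ with piecewise-linear $\psi_n$: without it, $\max(\eta,\psi_n)$ would not be continuous and (\ref{4.4}) i) would fail. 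With continuity, the paper's $\wh{\eta}_n$ converge pointwise directly to $\eta^*$ (not merely to $\ov{\eta}^*$), so the admissibility statement $\strokedint_D\eta^*(\sqrt{u}+\varphi)\,dz\ge\nu$ comes out immediately. So the genuine content of the proof is in the first two paragraphs of your argument; the final identification step, which you flag as ``the main obstacle,'' is in fact a non-issue here, and your abstract u.s.c.-approximation of $\ov{\eta}^*$ could be replaced by the paper's simpler $\max(\eta,\psi_n)$ tending to $\eta^*$.
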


\begin{proof}
The existence of a minimizer for (\ref{4.38a}) is shown by the same argument as in the case of $\ov{J}_{u,\nu}$ in (\ref{0.9}), see Theorem 2 of \cite{Szni19c}. To prove (\ref{4.39a}) we will apply Proposition \ref{prop4.1} to a sequence of auxiliary functions $\wh{\eta}_n$, $n \ge 1$, satisfying (\ref{4.4}) and decreasing to $\eta^*$.

\medskip
More precisely, we consider two positive and increasing sequences $a_n < b_n$, $n \ge 1$, tending to $\sqrt{u} + c_0 (\sqrt{\ov{u}} - \sqrt{u})$, and denote by $\psi_n$ the continuous piecewise linear functions equal to $0$ on $[0,a_n]$, to $1$ on $[b_n, \infty)$, and linear on $[a_n, b_n]$. We set $\wh{\eta}_n = \max(\eta, \psi_n)$, $n \ge 1$, with $\eta$ as in (\ref{4.3}). We note that
\begin{equation}\label{4.40a}
\begin{array}{l}
\mbox{$\wh{\eta}_n$ satisfies (\ref{4.4}) for each $n \ge 1$, moreover the sequence $\wh{\eta}_n$, $n \ge 1$, is non-increasing}
\\
\mbox{and converges pointwise to $\eta^*$.}
\end{array}
\end{equation}
We denote by $\wh{J}^n_{u,\nu}$ the variational quantity (\ref{4.7}) corresponding to $\wh{\eta}_n$, so that by (\ref{4.40a})
\begin{equation}\label{4.41a}
\mbox{the sequence $\wh{J}^n_{u,\nu}$, $n \ge 1$ is non-decreasing and bounded by $J^*_{u,\nu}$.}
\end{equation}
The claim (\ref{4.38a}) and hence Theorem \ref{theo4.3} will follow from (\ref{4.5}) in Proposition \ref{prop4.1} once we show that
\begin{equation}\label{4.42a}
\lim\limits_n \; \wh{J}^n_{u,\nu} = J^*_{u,\nu}.
\end{equation}
To this end we consider for each $n \ge 1$ a minimizer $\varphi_n$ for $\wh{J}^n_{u,\nu}$. Then, by Theorem 8.6, p.~208 and Corollary 8.7, p.~212 of \cite{LiebLoss01}, we can extract a subsequence $\varphi_{n_\ell}$, $\ell \ge 1$, converging in $L^2_{{\rm loc}} (\IR^d)$ and a.e.~to $\varphi \ge 0$ belonging to $D^1(\IR^d)$ such that
\begin{equation}\label{4.43a}
\mbox{\f $\dis\frac{1}{2d}$} \; \dis\int_{\IR^d} |\nabla \varphi |^2 \, dz \le \liminf\limits_{\ell} \; \mbox{\f $\dis\frac{1}{2d}$} \; \dis\int_{\IR^d}  | \nabla \varphi_{n_\ell}|^2 \,dz \stackrel{(\ref{4.41a})}{=} \lim\limits_n \; \wh{J}^n_{u,\nu}.
\end{equation}
Moreover, we have $\eta^* ( \sqrt{u} + \varphi)  \ge \limsup\limits_\ell \; \wh{\eta}_{n_\ell} (\sqrt{u} + \varphi_{n_\ell})$ a.e., so that
\begin{equation}\label{4.44a}
\begin{split}
\dis\strokedint_D \eta^* (\sqrt{u} + \varphi) \,dz & \ge \dis\strokedint_D \limsup\limits_\ell \; \wh{\eta}_{n_\ell} (\sqrt{u} + \varphi_{n_\ell}) \, dz
\\
&\hspace{-4.5ex} \stackrel{\rm reverse \; Fatou}{\ge} \limsup\limits_\ell \; \dis\strokedint_D \wh{\eta}_{n_\ell} (\sqrt{u} + \varphi_{\wh{\eta}_\ell}) \, dz \ge \nu.
\end{split}
\end{equation}
This shows that $J^*_{u,\nu} \le \lim_n \;\wh{J}^n_{u,\nu}$ and with (\ref{4.41a}) we see that (\ref{4.42a}) holds (and in addition that $\varphi$ above is a minimizer for $J^*_{u,\nu}$ in (\ref{4.39a})). This concludes the proof of Theorem \ref{theo4.3}.
\end{proof}

\begin{remark}\label{rem4.4a} \rm
If $c_0$ in Theorem \ref{theo3.1} can be chosen arbitrarily close to $1$, then a similar argument as above shows that one can replace $c_0$ by $1$ in (\ref{4.37a}) and obtain the statements corresponding to (\ref{4.38a}), (\ref{4.39a}) with this replacement. If in addition the plausible (but presently open) equality $\ov{u} = u_*$ holds, this shows that (\ref{4.38a}) holds with $\ov{J}_{u,\nu}$ in place of $J^*_{u,\nu}$, and this asymptotic upper bound matches the asymptotic lower bound in (\ref{0.8}).  \hfill $\square$
\end{remark}

\subsection*{The case of a small excess}

We will now discuss an important consequence of Theorem \ref{theo4.3} in the case of a small excess $\nu$. In addition to (\ref{4.1}), we will assume that $u < \wh{u}$ (see (\ref{1.31}) for the definition of $\wh{u}$). As pointed out in Section 1, both $\ov{u}$ and $\wh{u}$ are positive and smaller or equal to $u_*$. It is plausible but open at the moment that $\ov{u} = \wh{u} = u_*$. We recall that $\cA^0_N \subseteq \cA_N$ stand for the respective excess events (see (\ref{0.7})) $\cA^0_N = \{|D_N \backslash \cC^u_N| \ge \nu \, |D_N|\}$ and $\cA_N = \{ |D_N \backslash \cC^u_{2N}| \ge \nu \, |D_N|\}$. By (6.32) of \cite{Szni19d} and Theorem 2 of \cite{Szni19c}, one knows that when $0 < u < u_*$ and $\theta_0(u) \le \nu < 1$,
\begin{equation}\label{4.37}
\liminf\limits_N \;\; \dis\frac{1}{N^{d-2}} \; \log \IP [\cA_N] \ge \liminf\limits_N \;  \dis\frac{1}{N^{d-2}} \;\; \log \IP [\cA_N^0] \ge - \ov{J}_{u,\nu},
\end{equation}
where
\begin{equation}\label{4.38}
\ov{J}_{u,\nu} = \min \Big\{ \mbox{\f $\dis\frac{1}{2d}$}  \; \dis\int_{\IR^d} |\nabla \varphi |^2 dz; \varphi \ge 0, \varphi \in D^1(\IR^d), \; \mbox{and} \; \dis\strokedint_D \, \ov{\theta}_0 \big((\sqrt{u} + \varphi)^2\big) \, dz \ge \nu\Big\}
\end{equation}
(with $\ov{\theta}_0(\cdot)$ the right-continuous modification of $\theta_0(\cdot)$).

\medskip
As we will now see, when $0 < u < \ov{u} \wedge \wh{u}$ and $\nu$ is close to $\theta_0(u)$, $\ov{J}_{u,\nu}$ governs the exponential rates of decay of $\IP [\cA^0_N]$ and $\IP[\cA_N]$.
\begin{corollary}\label{cor4.3}
Assume that $0 < u < \ov{u} \wedge \wh{u}$. Then there exists $\nu_0 \in (\theta_0 (u), 1)$ such that for any $\nu \in [\theta_0(u),\nu_0]$ one has
\begin{equation}\label{4.39}
\lim\limits_N \; \dis\frac{1}{N^{d-2}} \; \log \IP[\cA^0_N] = \lim\limits_N \; \dis\frac{1}{N^{d-2}} \; \log \IP [\cA_N] = - \ov{J}_{u,\nu}.
\end{equation}
\end{corollary}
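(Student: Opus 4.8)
\textbf{Plan for the proof of Corollary \ref{cor4.3}.}
The strategy is to match the upper bound coming from Theorem \ref{theo4.3} with the lower bound \eqref{4.37}, after observing that for $\nu$ close to $\theta_0(u)$ the minimizer in the variational problem for $\ov{J}_{u,\nu}$ stays below the truncation level, so that $J^*_{u,\nu} = \ov{J}_{u,\nu}$. First I would note the elementary chain of inequalities
\begin{equation*}
-\ov{J}_{u,\nu} \le \liminf_N \tfrac{1}{N^{d-2}} \log \IP[\cA^0_N] \le \liminf_N \tfrac{1}{N^{d-2}} \log \IP[\cA_N] \le \limsup_N \tfrac{1}{N^{d-2}} \log \IP[\cA_N] \le - J^*_{u,\nu},
\end{equation*}
where the first inequality is \eqref{4.37}, the middle two are trivial (since $\cA^0_N \subseteq \cA_N$), and the last is \eqref{4.38a} of Theorem \ref{theo4.3}. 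Hence everything reduces to showing that there is $\nu_0 \in (\theta_0(u),1)$ such that $J^*_{u,\nu} = \ov{J}_{u,\nu}$ for all $\nu \in [\theta_0(u),\nu_0]$; since $\eta^* \le \ov\theta_0(\,\cdot^2)$ gives $J^*_{u,\nu} \ge \ov{J}_{u,\nu}$ always, it suffices to prove the reverse inequality $J^*_{u,\nu} \le \ov{J}_{u,\nu}$ for small excess.

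The key point is the regularity of $\theta_0$ near $u$: by \eqref{1.33}, since $u < \wh{u}$, the function $\theta_0$ is $C^1$ with positive derivative on a neighbourhood of $u$, so in particular $\theta_0 = \ov\theta_0$ is continuous there and strictly increasing near $u$; moreover $\theta_0(u) < 1$ because $u < u_*$. Fix a level $b_1 \in (\sqrt{u}, \sqrt{u}+c_0(\sqrt{\ov u}-\sqrt u))$ with $\theta_0(b_1^2) < 1$ (possible since $\theta_0$ is continuous and $\theta_0(u)<1$; if $\theta_0$ already reaches $1$ only at or above $u_* > u$ this is automatic). I would then show that for $\nu$ sufficiently close to $\theta_0(u)$, any minimizer $\varphi$ of $\ov{J}_{u,\nu}$ in \eqref{4.38} satisfies $\sqrt{u}+\varphi \le b_1$ a.e.\ on $D$, so that $\ov\theta_0((\sqrt u+\varphi)^2) = \eta^*(\sqrt u + \varphi)$ on $D$, whence $\varphi$ is admissible for $J^*_{u,\nu}$ and $J^*_{u,\nu} \le \tfrac{1}{2d}\int_{\IR^d}|\nabla\varphi|^2\,dz = \ov{J}_{u,\nu}$. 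To get the sup-norm control on the minimizer one argues by contradiction along a sequence $\nu_k \downarrow \theta_0(u)$: if minimizers $\varphi_k$ of $\ov{J}_{u,\nu_k}$ had $|\{\sqrt u + \varphi_k > b_1\}|$ bounded away from $0$, then since each $\varphi_k \in D^1(\IR^d)$ decays at infinity one obtains a uniform lower bound on $\int_{\IR^d}|\nabla\varphi_k|^2$ (via an isoperimetric/Sobolev inequality or a direct capacity estimate, exactly in the spirit of the test function $\varphi = (\sqrt u_* - \sqrt u)h_{B_\nu}$ discussion below \eqref{0.13}), contradicting $\ov{J}_{u,\nu_k} = \tfrac{1}{2d}\int|\nabla\varphi_k|^2 \to 0$ as $\nu_k \downarrow \theta_0(u)$; the latter limit holds because $\varphi = 0$ shows $\ov{J}_{u,\theta_0(u)} = 0$ and, by continuity of $\theta_0$ at $u$ together with a scaling/truncation argument, $\ov{J}_{u,\nu}$ is continuous (indeed $\to 0$) as $\nu \downarrow \theta_0(u)$.

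The main obstacle is precisely this last quantitative step: controlling the minimizer of $\ov{J}_{u,\nu}$ from above by a fixed level $b_1$ for all $\nu$ near $\theta_0(u)$, equivalently ruling out that a small excess could be produced ``cheaply'' by pushing $\sqrt u + \varphi$ past $b_1$ on a set of non-negligible volume. Making $\ov{J}_{u,\nu} \to 0$ precise requires knowing that the constraint $\strokedint_D \ov\theta_0((\sqrt u + \varphi)^2)\,dz \ge \nu$ can be met with small Dirichlet energy when $\nu - \theta_0(u)$ is small; here the $C^1$ property and positivity of $\theta_0'$ near $u$ from \eqref{1.33} are essential, since they convert a small excess of volume into a small required increment of $\varphi$ on a ball, and the Dirichlet energy of the corresponding capacitary-type test function is then small. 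Once $\ov J_{u,\nu} \to 0$ and the lower energy bound for configurations exceeding level $b_1$ are both in hand, the contradiction closes and the identity $J^*_{u,\nu} = \ov J_{u,\nu}$ follows on $[\theta_0(u),\nu_0]$, which combined with the displayed chain of inequalities yields \eqref{4.39}. I would also record, as in Remark \ref{rem4.4}, that the argument in fact identifies the common value and that $\nu_0$ depends only on $d,u$ (through $c_0$, $\ov u$, $\wh u$ and the behaviour of $\theta_0$ near $u$).
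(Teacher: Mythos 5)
Your overall plan (match \eqref{4.37} with Theorem~\ref{theo4.3} by identifying the two variational constants for small excess, using the $C^1$ regularity of $\theta_0$ near $u$ from \eqref{1.33} to control minimizers) is the right one, and this is indeed what the paper does. However, there is a sign error at the very start that sends the rest of the argument the wrong way. Since $(\sqrt{u}+c_0(\sqrt{\ov u}-\sqrt u))^2<\ov u\le u_*$ and $\theta_0$ is continuous on $[0,u_*)$, one has $\theta^*=\ov\theta_0$ below the truncation level and $\theta^*=1\ge\ov\theta_0$ above it, so $\eta^*\ge\ov\theta_0(\cdot^2)$ (not $\le$, as you write), and hence $J^*_{u,\nu}\le\ov J_{u,\nu}$ is the \emph{trivial} direction. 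The nontrivial direction that has to be established for small $\nu$ is $J^*_{u,\nu}\ge\ov J_{u,\nu}$, and for this you must control the sup-norm of a minimizer of the \emph{relaxed} problem $J^*_{u,\nu}$ (or of a further relaxed $\wt J_{u,\nu}$), showing that it stays below $\sqrt u + c_0(\sqrt{\ov u}-\sqrt u)$ and is therefore admissible for $\ov J_{u,\nu}$. Bounding a minimizer of $\ov J_{u,\nu}$, as you propose, only re-derives the trivial inequality $J^*\le\ov J$ and does not close the gap.

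Two further points. First, $\eta^*$ is discontinuous at the truncation level, which is why the paper introduces an intermediate $C^1$ function $\wt\eta\ge\eta^*$ (so $\wt J_{u,\nu}\le J^*_{u,\nu}\le\ov J_{u,\nu}$) and then invokes the sup-norm bound for minimizers of $\wt J_{u,\nu}$ established above (98) of \cite{Szni19c}: for $\nu\in[\theta_0(u),\theta_0(u)+c_8]$ any minimizer $\wt\varphi$ of $\wt J_{u,\nu}$ is bounded by $\sqrt{u}_0-\sqrt u$, so $\ov\theta_0((\sqrt u+\wt\varphi)^2)=\eta^*(\sqrt u+\wt\varphi)=\wt\eta(\sqrt u+\wt\varphi)$ and all three constants coincide. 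Second, even with the direction corrected, your capacity/compactness argument only controls the Lebesgue measure of the superlevel set $\{\sqrt u+\varphi_k>b_1\}$, not the pointwise supremum; promoting a small-measure bound to the a.e.\ bound $\sqrt u+\varphi\le b_1$ requires the regularity theory for the minimizers (developed for the smooth $\wt\eta$ in \cite{Szni19c}) and is precisely the content you are missing.
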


\begin{proof}
In view of (\ref{4.37}) and the inclusion $\cA^0_N \subseteq \cA_N$ we only need to focus on the derivation of an asymptotic upper bound for $\IP[\cA_N]$. We first pick $u_0 \in (u,\ov{u} \wedge \wh{u})$ such that (with $c_0$ from Theorem \ref{theo3.1}):
\begin{equation}\label{4.40}
\sqrt{u}_0 < \sqrt{u} + c_0 (\sqrt{\ov{u}} - \sqrt{u}).
\end{equation}
Recall the notation $\eta(\cdot)$ from (\ref{4.3}) and $\eta^*$ from (\ref{4.37a}). Then, see (\ref{1.33}), $\theta_0$ is $C^1$ and $\theta'_0$ positive on a neighborhood of $[0,u_0]$. One can thus choose a  function $\wt{\eta}$ from $\IR_+$ into $\IR_+$ such that
\begin{align}
& \eta^* \le \wt{\eta}, \label{4.41}
\\[1ex]
& \left\{ \begin{array}{l}
\mbox{$\wt{\eta} = \eta$ on $[0, \sqrt{u}_0]$ and $\wt{\eta} > \eta$ on $[\sqrt{u}_0, + \infty)$}, \label{4.43}
\\[1ex]
\mbox{$\wt{\eta}$ is $C^1$ and $\wt{\eta}\,'$ bounded uniformly continuous on $\IR_+$,}
\\[1ex]
\mbox{$\wt{\eta}\,'$ is uniformly positive on each interval $[a, + \infty)$, $a > 0$}.
\end{array}\right.
\end{align}
Thus, $\wt{\eta}$ satisfies the conditions of Lemma 3 of \cite{Szni19c}. Then, as in Lemma 5 of \cite{Szni19c}, one can set for $\nu \ge \theta_0(u)$
\begin{equation}\label{4.44}
\wt{J}_{u,\nu} = \min \Big\{ \mbox{\f $\dis\frac{1}{2d}$} \; \dis\int_{\IR^d} |\nabla \varphi|^2 dz; \varphi \ge 0, \varphi \in D^1(\IR^d), \; \mbox{and} \; \dis\strokedint_D \wt{\eta} (\sqrt{u} + \varphi) \,dz \ge \nu\Big\}.
\end{equation}
Note that
\begin{equation}\label{4.45}
\ov{\theta}_0 (b^2) \le \eta^* (b) \le \wt{\eta} (b), \; \mbox{for $b \ge 0$ (and $\ov{J}_{u,\nu} \ge J^*_{u,\nu} \ge \wt{J}_{u,\nu}$ for $\theta_0 (u) \le \nu < 1$)}.
\end{equation}
One also knows, see above (98) of \cite{Szni19c}, that for a suitable $c_8(u, \wt{\eta}) > 0$, for all $\nu \in [\theta_0(u), \theta_0(u) + c_8]$, any minimizer $\wt{\varphi}$ for $\wt{J}_{u,\nu}$ in (\ref{4.44}) is bounded by $\sqrt{u}_0 - \sqrt{u}$ so that $\ov{\theta}_0((\sqrt{u} + \wt{\varphi})^2 ) = \eta^*(\sqrt{u} + \wt{\varphi}) = \wt{\eta}(\sqrt{u} + \wt{\varphi})$ and this minimizer is also a minimizer for $\ov{J}_{u,\nu}$ and $J^*_{u,\nu}$, so that
\begin{equation}\label{4.46}
\mbox{for $\nu \in [\theta_0(u), \theta_0(u) + c_8], \; \ov{J}_{u,\nu} = J^*_{u,\nu} = \wt{J}_{u,\nu}$}.
\end{equation}
The application of Theorem \ref{theo4.3} thus yields
\begin{equation}\label{4.47}
\limsup\limits_N \; \dis\frac{1}{N^{d-2}} \; \log \IP [\cA_N] \le - J^*_{u,\nu} = - \ov{J}_{u,\nu} \; \mbox{for all} \; \nu \in [\theta_0(u), \theta_0(u) + c_8].
\end{equation}
Combined with (\ref{4.37}), the claim (\ref{4.39}) now follows with $\nu_0 = \theta_0(u) + c_8$.
\end{proof}

\begin{remark}\label{rem4.4} \rm 1) In the small excess regime corresponding to $\theta_0(u) \le \nu \le \theta_0(u) + c_8(u,\wt{\eta})$ in (\ref{4.47}) above, one can actually show that all minimizers for $\ov{J}_{u,\nu}$ are minimizers for $\wt{J}_{u,\nu}$, see below (99) of \cite{Szni19c}. These minimizers are $C^{1,\alpha}$-regular for all $0 < \alpha < 1$ and their supremum norm is at most $\sqrt{u}_0 - \sqrt{u}$ $(< \sqrt{u}_* - \sqrt{u})$. We refer to Theorem 3 of \cite{Szni19c} for more properties of the minimizers of $\ov{J}_{u,\nu}$ in the small excess regime.

\bigskip\n
2) One can naturally wonder whether (\ref{4.39}) extends beyond the small excess regime and whether for all $0 < u < \ov{u} \wedge \wh{u}$, 
\begin{equation}\label{4.48}
\lim\limits_N \; \dis\frac{1}{N^{d-2}} \; \log \IP [\cA_N] = - \ov{J}_{u,\nu}  \; \mbox{for all} \; \nu \in [\theta_0(u),1)?
\end{equation}
(This asymptotics then also holds for $\cA^0_N$ due to (\ref{4.37}) and the inclusion $\cA^0_N \subseteq \cA_N$.)

\medskip
We also refer to Remark \ref{rem4.4a} on the related issue of being able to choose $c_0$ arbitrarily close to $1$ in Theorem \ref{theo3.1}.

\bigskip\n
3) Letting $\cC^u_\infty$ stand for the infinite cluster of $\cV^u$, when $u < u_*$, one can also wonder whether a similar asymptotics holds for an excess of points in $D_N$ outside the infinite cluster. Does one have
\begin{equation}\label{4.49}
\lim\limits_N \; \dis\frac{1}{N^{d-2}} \; \log \IP [|D_N \backslash \cC^u_\infty | \ge \nu \, |D_N|] = \ov{J}_{u,\nu}, \; \mbox{for $0 < u < u_*$ and $\theta_0(u) \le \nu < 1$ ?}
\end{equation}
(The lower bound corresponding to (\ref{4.49}) holds by (\ref{4.37}) and the inclusions $D_N \backslash \cC^u_N \subseteq D_N \backslash \cC^u_{2N} \subseteq D_N \backslash \cC^u_\infty$. And from a positive answer to (\ref{4.49}) the statement (\ref{4.48}) would follow as well.)  

\medskip
We refer to Theorem 2.12 on p.~21 of \cite{Cerf00} for a result concerning a similar question in the context of the Wulff droplet and Bernoulli percolation.

\bigskip\n
4) As mentioned in the Introduction it is open whether for large enough $\nu$ the minimizers $\varphi$ for $\ov{J}_{u,\nu}$ in (\ref{4.38}) reach the value $\sqrt{u}_* - \sqrt{u}$ on a set of positive Lebesgue measure. If the function $\theta_0$ is discontinuous at $u_*$ (a not very plausible assumption) this is indeed the case, see Remark 2 1) of \cite{Szni19c}. Having a better grasp of the of the behaviour of $\theta_0$ near $u_*$ would likely help making progress on this issue. We refer to Figures 4 and 2 of \cite{MariLebo06} for the result of simulations in the (closely) related model of the level-set percolation of the Gaussian free field when $d=3$.
\hfill $\square$
\end{remark}

\end{document}